\def\mc{\mathcal}
\def\H{\mc H}
\def\S{\mc S}
\def\RR{\mathbb R}
\def\NN{\mathbb N}
\newcommand{\lvl}[1]{{\color{violet}#1}}
\def\opw{\operatorname{op}^w}
\def\symw{\operatorname{sym}^w}
\numberwithin{equation}{section}
\titleformat*{\section}{\Large \scshape\center}
\titleformat*{\subsection}{\fontsize{14}{14} \sffamily}
\theoremstyle{plain}
\newtheorem{theorem}{Theorem}
\newtheorem*{theorem*}{Theorem}
\newtheorem{lemma}[theorem]{Lemma}
\newtheorem{proposition}[theorem]{Proposition}
\newtheorem{corollary}[theorem]{Corollary}
\theoremstyle{definition}
\newtheorem*{definition*}{Definition}
\newtheorem{example}[theorem]{Example}
\theoremstyle{remark}
\newtheorem{remark}{Remark}
\DeclareMathOperator{\tr}{tr}
\newcommand{\hookuparrow}{\mathrel{\rotatebox[origin=c]{90}{$\hookrightarrow$}}}
\begin{document}
\pagenumbering{gobble}
\title{\huge{Heisenberg-smooth operators from the phase space perspective}}
\author{Robert Fulsche and Lauritz van Luijk}
\date{}
\maketitle
\pagenumbering{arabic}

\begin{abstract}
    Cordes' characterization of Heisenberg-smooth operators bridges a gap between the theory of pseudo-differential operators and quantum harmonic analysis (QHA).
    We give a new proof of the result by using the phase space formalism of QHA.
    Our argument is flexible enough to generalize Cordes' result in several directions: (1) We can admit general quantization schemes, (2) allow for other phase space geometries, (3) obtain Schatten-class analogs of the result, and (4) are able to characterize precisely `Heisenberg-analytic' operators.
    For (3), we use QHA to derive Schatten versions of the Calder\'on-Vaillancourt theorem, which might be of independent interest.
\end{abstract}

\tableofcontents

\section{Introduction}

Pseudodifferential operators are a well-understood class of linear operators, which prominently appear at many places in mathematical analysis. Besides being useful tools in PDE problems, they also appear naturally from a harmonic analysis point of view. 
One of the many well-known results, where it becomes clear that pseudodifferential operators are natural objects to study, is Cordes' theorem on Heisenberg-smooth operators. Before recalling this theorem, we need to introduce some notation.

On the Hilbert space $\mathcal H = L^2(\mathbb R^d)$, we can consider the unitary Weyl operators: 
\begin{align}\label{eq:weyl_ops}
    W_{(x, \xi)}f(y) = e^{iy \cdot \xi - i \frac{x\cdot \xi}{2}} f(y-x).
\end{align}
Here, $(x, \xi) \in \mathbb R^{2d}$ is a point in phase space. For simplicity, we will usually abbreviate such points by $z = (x, \xi)$, $w = (y, \eta) \in \mathbb R^{2d}$. 
The Weyl operators form a strongly continuous projective representation of the phase space $\mathbb R^{2d}$, which satisfies $W_z^\ast = W_{-z}$ as well as the exponentiated form of the CCR relations, i.e., 
\begin{align}\label{eq:ccr}
    W_z W_w = e^{i\sigma(z, w)/2}W_{z+w},
\end{align}
where $\sigma$ is the usual symplectic form on $\mathbb R^{2d}$: $\sigma(z, w)= y\cdot \xi - x\cdot \eta$. For an operator $A \in \mathcal L(\mathcal H)$, we set the phase space shift of the operator by $z \in \mathbb R^{2d}$ by $\alpha_z(A) := W_z A W_{-z}$. 

Applying this shift of operators, for example in the theory of pseudodifferential operators, has of course a long history, see, e.g., \cite{Folland1989}. Based on this shift, R.\ Werner developed the framework of \emph{Quantum Harmonic Analysis} in \cite{werner84}, which stimulated fruitful research in the past few years \cite{Berge_Berge_Fulsche, Berge_Berge_Luef_Skrettingland2022, Fulsche2020, Fulsche2024,Halvdansson2022, keyl_kiukas_werner16, luef_eirik2018, Luef_Skrettingland2021, Sk20}. We will not give a thorough introduction to the topic of quantum harmonic analysis (we refer to \cite{luef_eirik2018} or \cite{Fulsche_Galke2023} for basic accounts on the matter) and will only mention the facts that we need whenever it will be appropriate for our discussion. 

Using the shift of an operator, one can define the phase space partial derivatives of the operator $A \in \mathcal L(\mathcal H)$ by
\begin{align*}
    \partial_j A &= \lim_{t \to 0} \frac{\alpha_{te_j}(A) - A}{t}, \quad j = 1, \dots, 2d,
\end{align*}
provided the limit exists in the Banach space $\mathcal L(\mathcal H)$. Here, $e_j$ are the standard basis vectors of $\mathbb R^{2d}$. Having these notions of partial derivatives, one defines iterated partial derivatives $\partial^{\alpha}A$, $\alpha \in \mathbb N_0^{2d}$, in the obvious manner. In studying such differentiable operators, one should first consider the continuous elements:
\begin{align*}
    C(\mathcal H) = C^0(\mathcal H) = \{ A \in \mathcal L(\mathcal H): ~z \mapsto \alpha_z(A) \text{ is } \| \cdot\|_{op}\text{-cont.}\}.
\end{align*}
Note that $C(\mathcal H)$ was denoted $\mathcal C_1$ in other works on quantum harmonic analysis. 
We change the notation here to avoid confusion with $C^1$.
Among the class of bounded linear operators, it plays the same role as the bounded uniformly continuous functions within $L^\infty(\mathbb R^{2d})$.
$C(\mathcal H)$ is the most important $C^\ast$ algebra to study from the point of view of \emph{correspondence theory}, which relates translation invariant function spaces on phase space with their noncommutative counterpart: translation invariant subspaces of $\mathcal L(\H)$.
Further, it has a rich and well-understood Fredholm theory \cite{Fulsche_Hagger2019, Hagger2021}.

It is now natural to consider the $k$-times differentiable elements: For $k \in \mathbb N_0$ let 
\begin{align*}
    C^k(\mathcal H) = \{ A \in \mathcal L(\mathcal H): ~\partial^\alpha A \text{ exists and } \partial^\alpha A \in C(\mathcal H) \text{ for every } \alpha \in \mathbb N_0^d, ~|\alpha| \leq k\}.
\end{align*}
Note that we asked for $\partial^\alpha A \in C(\mathcal H)$ instead of $\partial^\alpha A \in \mathcal L(\mathcal H)$, as it is the more natural class to work within when it comes to considerations of quantum harmonic analysis. From here, one can go on to the space $C^\infty(\mathcal H) = \cap_{k \in \mathbb N} C^k(\mathcal H)$. From the viewpoint of quantum harmonic analysis, phase space derivatives (of quadratic forms) have recently been used to study problems of essential self-adjointness \cite{ESA, Fulsche_vanLuijk2023} as well as operator Sobolev inequalities \cite{Lafleche2024}. Nevertheless, without having the formal framework of quantum harmonic analysis at hand, H.~O.~Cordes already anticipated these concepts, the class $C^\infty(\mathcal H)$ was seemingly first studied by him. He termed the elements of $C^\infty(\mathcal H)$ \emph{Heisenberg-smooth operators}. His main result on this class is the following, see \cite[Theorem 1.2]{Cordes1979} or \cite[Chapter 8]{Cordes1995}:
\begin{theorem*}
$C^\infty(\mathcal H)$ consists exactly of the class of pseudodifferential operators with symbols in $C_b^\infty(\mathbb R^{2d})$.
\end{theorem*}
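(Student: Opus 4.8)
The plan is to establish the two inclusions separately, both resting on the one structural feature that makes Weyl quantization special: covariance under phase-space shifts, $\alpha_z(\opw(a)) = \opw(a(\,\cdot - z))$, valid for any symbol $a \in \S'(\RR^{2d})$. Differentiating this identity in $z$ at the origin converts operator derivatives into symbol derivatives: whenever $\partial^\alpha \opw(a)$ exists in $\mathcal L(\H)$ it equals $(-1)^{|\alpha|}\opw(\partial^\alpha a)$, the identity of symbols being read in $\S'(\RR^{2d})$ (this uses that $B \mapsto \symw(B)$ is continuous $\mathcal L(\H)\to\S'$). Everything below is organized around transporting information across this identity.

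For the inclusion $\opw(C_b^\infty) \subseteq C^\infty(\H)$ I would combine covariance with the Calder\'on--Vaillancourt theorem, which controls $\|\opw(b)\|_{op}$ by a fixed finite number (depending only on $d$) of the seminorms $\sup|\partial^\beta b|$. For $a \in C_b^\infty$ every $\opw(\partial^\alpha a)$ is then bounded; the difference quotients $t^{-1}\big(\alpha_{te_j} - \mathrm{id}\big)\opw(a) = \opw\big(t^{-1}(a(\,\cdot - te_j) - a)\big)$ converge in operator norm to $\opw(-\partial_j a)$ because the relevant seminorms of $t^{-1}(a(\,\cdot - te_j) - a) + \partial_j a$ are $O(t)$ by Taylor's theorem; iterating gives $\partial^\alpha \opw(a) = (-1)^{|\alpha|}\opw(\partial^\alpha a)$, and these lie in $C(\H)$ since $z \mapsto \opw(\partial^\alpha a(\,\cdot - z))$ is operator-norm continuous, again by Calder\'on--Vaillancourt applied to $\partial^\alpha a(\,\cdot-z)-\partial^\alpha a$. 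So $\opw(a) \in C^\infty(\H)$.

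The inclusion $C^\infty(\H) \subseteq \opw(C_b^\infty)$ is the substantial part --- a converse Calder\'on--Vaillancourt theorem. Let $A \in C^\infty(\H)$ and let $a \in \S'(\RR^{2d})$ be its Weyl symbol (which exists by the Schwartz kernel theorem). Covariance shows $\opw(\partial^\alpha a) = (-1)^{|\alpha|}\partial^\alpha A$ is bounded for every $\alpha$, and the goal is to upgrade this to $a \in C_b^\infty$. The phase-space way to do this is to probe $A$ with Gaussian coherent states: fix a normalized Gaussian $g$, put $\phi_\lambda = W_\lambda g$, and study the channel matrix $m_A(\lambda, \mu) = \langle \phi_\lambda, A \phi_\mu\rangle$. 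Conjugating a generator $L_j$ of the Weyl representation by $W_\mu$ shifts it by a constant linear in $\mu$, so the commutator identity behind $\partial_j A = i[L_j, A]$ translates, on the level of $m_A$, into an identity trading a factor linear in $\lambda - \mu$ (plus terms with $g$ replaced by $L_jg$, still Schwartz) against an application of $\partial_j$ on the operator side. Since $\partial^\alpha A$ is bounded for \emph{all} $\alpha$ and matrix elements of a bounded operator between unit vectors are bounded pointwise, iterating this identity yields pointwise rapid off-diagonal decay: $|m_A(\lambda, \mu)| \leq C_N\,\langle \lambda - \mu\rangle^{-N}\max_{|\gamma|\leq N}\|\partial^\gamma A\|_{op}$ for every $N$ (and likewise for all derivatives of $m_A$). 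On the symbol side, $m_A(\lambda,\mu)$ is, up to a unimodular factor and a linear change of variables, the short-time Fourier transform of $a$ against a Gaussian window evaluated at $\big(\tfrac{\lambda+\mu}{2}, J(\mu - \lambda)\big)$; the off-diagonal decay just obtained therefore says that this transform decays rapidly in its frequency variable, uniformly in its space variable --- and that is precisely a known time-frequency characterization of $C_b^\infty(\RR^{2d})$. Hence $a \in C_b^\infty$ and $A = \opw(a)$.

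The hard direction is the whole content, and its technical heart is the equivalence between rapid off-diagonal decay of the channel matrix and membership of the symbol in $C_b^\infty$ --- a Beals-type characterization. Two points require care. First, one really should go through the time-frequency dictionary rather than try to reconstruct $a$ directly from $m_A$: the reproducing formula $a = c\iint m_A(\lambda,\mu)\,\mathcal W(\phi_\lambda, \phi_\mu)\, d\lambda\, d\mu$ is not absolutely convergent even for smooth $A$ --- the cross-Wigner functions $\mathcal W(\phi_\lambda, \phi_\mu)$ do not decay in $|\lambda - \mu|$, only their oscillation grows --- so pointwise estimates on $\partial^\alpha a$ must be extracted via the modulation-space machinery. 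Second, the $C(\H)$ rather than $\mathcal L(\H)$ condition in the definition of $C^\infty(\H)$ must be carried along, both to guarantee that the operator derivatives genuinely exist and that difference quotients pass uniformly to derivatives, and to obtain $\partial^\alpha A\in C(\H)$ in the easy direction; this is where strong continuity of $z \mapsto \alpha_z$, combined with the decay estimates, does its work. One further remark: the right-hand side of the claimed identity is unambiguous, since passing between Weyl, Kohn--Nirenberg and other $\tau$-quantizations only alters the symbol within $C_b^\infty$.
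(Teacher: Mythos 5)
Your proposal is correct in outline, but it follows a genuinely different route from the paper. For the hard inclusion you run a Beals-type coherent-state argument: trade each operator derivative $\partial_j A = i[L_j,A]$ for a factor linear in $\lambda-\mu$ in the channel matrix $m_A(\lambda,\mu)=\langle W_\lambda g, A W_\mu g\rangle$, obtain rapid off-diagonal decay $|m_A(\lambda,\mu)|\lesssim_N \langle\lambda-\mu\rangle^{-N}$ from boundedness of \emph{all} $\partial^\gamma A$, identify $m_A$ (up to phase and a linear change of variables) with the STFT of $\symw(A)$ against a Gaussian window, and invoke the known time-frequency characterization of $S^0_{0,0}=C_b^\infty$ by frequency decay of the STFT uniform in the space variable (Gr\"ochenig--Rzeszotnik style); the easy inclusion you get from Calder\'on--Vaillancourt plus covariance and Taylor estimates on difference quotients. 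All of these steps are sound, including your correct warning that one should not try to resum the non-absolutely-convergent coherent-state reproducing formula directly. The paper instead proves both inclusions simultaneously and with only finite-order information: it shows $\opw$ is a shift-intertwining isomorphism $M^{\infty,1}(\RR^{2d})\to M^{\infty,1}(\H)$, sandwiches these spaces via the embeddings $C_b^{2d+1}(\RR^{2d})\hookrightarrow M^{\infty,1}(\RR^{2d})\hookrightarrow L^\infty(\RR^{2d})$ and $C^{2d+1}(\H)\hookrightarrow M^{\infty,1}(\H)\hookrightarrow \mathcal L(\H)$, and identifies the smooth vectors of the translation action on all levels with $C_b^\infty(\RR^{2d})$ resp.\ $C^\infty(\H)$, so that the isomorphism of smooth vectors is the theorem. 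What your route buys is a self-contained, classical almost-diagonalization proof (modulo the cited STFT characterization of $C_b^\infty$, which is really the technical heart and which you do not prove); what the paper's route buys is symmetry between the two directions, the need for only fixed finite-order ($2d+1$) estimates rather than all-order decay, and the flexibility exploited later for $\tau$- and $\Phi$-quantizations, other abelian phase spaces, Schatten-class and analytic versions --- generalizations that are less immediate from the coherent-state argument.
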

In his proof, Cordes used a combination of harmonic analysis on the phase space and PDE methods. It is the purpose of this note to present a proof that relies only on harmonic analysis on the phase space, avoiding PDE techniques. 
To this end, we will make use of some aspects quantum harmonic analysis. In the end, it will turn out that Cordes' result is essentially a consequence of natural mapping properties of Fourier transforms as well as inclusion properties of certain function/operator spaces. 
In our opinion, this approach not only gives a new elegant proof of the results but also offers a structural explanation of why it holds true.
Indeed, the method is also flexible enough to provide the result in $\Phi$-quantization ($\Phi$ a group homomorphism of the state space) for operators on $L^2(\mathbb R^{d_1} \times \mathbb T^{d_2} \times \mathbb Z^{d_3})$, as is discussed afterward. Besides having certain geometric flexibility, we can also modify the notion of smoothness: We can also consider Schatten-class versions of Cordes' theorem. Proving this version of the theorem hinges on Schatten-class versions of the Calder\'{o}n-Vaillancourt estimates, as well as reverse Calder\'{o}n-Vaillancourt estimates. Finally, we will also be able to prove a version of Cordes' theorem for \emph{Heisenberg-analytic} operators.

We very briefly describe the structure of the paper: In Section \ref{sec:1}, we describe the necessary tools from quantum harmonic analysis, as well as some embedding results on function spaces, which are necessary to prove Cordes' result in Weyl quantization. In Section \ref{sec:2}, we will describe how to adapt the methods from Section \ref{sec:1} to obtain the same result for $\tau$-quantization. Section \ref{sec:3} will be where we sketch how to apply essentially the same method to obtain Cordes' result on $L^2(\mathbb R^{d_1} \times \mathbb T^{d_2} \times \mathbb Z^{d_3})$. In Section \ref{sec:5}, we will discuss the Calder\'{o}n-Vaillancourt theorem from the QHA perspective and apply versions of it to obtain Schatten-class versions of Cordes' theorem. Finally, in Section \ref{sec:analytic}, Heisenberg-analytic operators will be discussed.

\section{Cordes' result on Heisenberg-smooth operators and its proof in Weyl quantization}\label{sec:1}

Before turning towards the characterization of Heisenberg-smooth operators, let us briefly use the opportunity to discuss some of the basic properties of the operator spaces $C^k(\mathcal H)$ and $C^{\infty}(\mathcal H)$.

Endowed with the norm $\| A\|_{C^k} = \max_{|\alpha| \leq k} \| \partial^\alpha A\|_{op}$, $C^k(\mathcal H)$ clearly turns into a Banach space. The partial derivatives of operators have some properties resembling classical differential operators on functions.
\begin{lemma}
    Let $A, B \in C^1(\mathcal H)$.
    \begin{enumerate}[(1)]
        \item (Linearity) If $\lambda, \mu \in \mathbb C$, then $\partial_j (\lambda A  + \mu B) = \lambda \partial_j A + \mu \partial_j B$.
        \item (Product rule) $AB \in C^1(\mathcal H)$ and $\partial_j(AB) = (\partial_j A ) B + A (\partial_j B)$.
        \item (Derivative of inverse) If $B \in C^1(\mathcal H)$ is invertible, then $B^{-1} \in C^1(\mathcal H)$ with $\partial_j B^{-1} = -B^{-1}(\partial_j B)B^{-1}$.
        \item (Quotient rule) If $B$ is invertible, then $AB^{-1}$ and $B^{-1}A \in C^1(\mathcal H)$ with 
        \begin{align*}
            \partial_j (AB^{-1}) = (\partial_j A) B^{-1} - AB^{-1}(\partial_j B) B^{-1}
        \end{align*}
        and
        \begin{align*}
            \partial_j(B^{-1}A) = -B^{-1}(\partial_j B) B^{-1} A + B^{-1}(\partial_j A).
        \end{align*}
    \end{enumerate}
\end{lemma}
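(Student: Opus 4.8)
The plan is to reduce everything to the key observation that the shift $\alpha_z$ is a $*$-automorphism of $\mathcal L(\mathcal H)$ for each $z$, together with the fact that the difference quotients converge in operator norm by hypothesis. For linearity (1), this is immediate: the difference quotient $t^{-1}(\alpha_{te_j}(\lambda A + \mu B) - (\lambda A + \mu B))$ equals $\lambda \cdot t^{-1}(\alpha_{te_j}(A)-A) + \mu \cdot t^{-1}(\alpha_{te_j}(B)-B)$ since $\alpha_z$ is linear, and one takes limits. The only thing to double-check here is that $\lambda A + \mu B \in C^1(\mathcal H)$ to begin with, i.e. that the derivative lands in $C(\mathcal H)$; but $C(\mathcal H)$ is a linear subspace, so $\partial_j(\lambda A + \mu B) = \lambda\partial_j A + \mu\partial_j B \in C(\mathcal H)$.

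For the product rule (2), I would write the standard ``add and subtract'' identity for the difference quotient: since $\alpha_{te_j}$ is multiplicative,
\[
\frac{\alpha_{te_j}(AB) - AB}{t} = \frac{\alpha_{te_j}(A) - A}{t}\,\alpha_{te_j}(B) + A\,\frac{\alpha_{te_j}(B) - B}{t}.
\]
As $t \to 0$, the first summand converges to $(\partial_j A)B$ because $\alpha_{te_j}(B) \to B$ in operator norm (this uses $B \in C(\mathcal H)$, which follows from $B \in C^1(\mathcal H)$) and multiplication is jointly continuous on bounded sets — here one must note $\|\alpha_{te_j}(B)\|_{op} = \|B\|_{op}$ is bounded uniformly in $t$, so that the product of a norm-convergent sequence with a norm-convergent (hence bounded) sequence converges to the product of the limits. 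The second summand converges to $A(\partial_j B)$ directly. Then $AB \in C^1(\mathcal H)$ because $(\partial_j A)B + A(\partial_j B) \in C(\mathcal H)$: $C(\mathcal H)$ is an algebra (indeed a $C^*$-algebra), and $\partial_j A, \partial_j B \in C(\mathcal H)$, $A, B \in C(\mathcal H)$.

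For the derivative of the inverse (3), I would start from the algebraic identity $B^{-1} - \alpha_{te_j}(B)^{-1} = \alpha_{te_j}(B)^{-1}\big(\alpha_{te_j}(B) - B\big)B^{-1}$, valid because $\alpha_{te_j}(B)^{-1} = \alpha_{te_j}(B^{-1})$ (automorphisms preserve inverses) and $\alpha_{te_j}(B)$ is invertible. Dividing by $-t$ gives
\[
\frac{\alpha_{te_j}(B^{-1}) - B^{-1}}{t} = -\,\alpha_{te_j}(B)^{-1}\,\frac{\alpha_{te_j}(B) - B}{t}\,B^{-1}.
\]
The main point requiring care — and the step I expect to be the principal, if modest, obstacle — is showing $\alpha_{te_j}(B)^{-1} \to B^{-1}$ in operator norm as $t \to 0$: this follows from $\alpha_{te_j}(B) \to B$ in norm together with the fact that inversion is norm-continuous on the open set of invertible operators (equivalently, a Neumann-series estimate: for $t$ small, $\|\alpha_{te_j}(B) - B\|_{op}\|B^{-1}\|_{op} < 1$, whence $\alpha_{te_j}(B)^{-1}$ exists with the standard bound and converges to $B^{-1}$). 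Granting this, the right-hand side converges to $-B^{-1}(\partial_j B)B^{-1}$ by joint continuity of the (now triple) product on bounded sets, and $B^{-1} \in C^1(\mathcal H)$ since $-B^{-1}(\partial_j B)B^{-1} \in C(\mathcal H)$. Finally, (4) is a direct combination: apply the product rule (2) to $A \cdot B^{-1}$ and to $B^{-1}\cdot A$, then substitute $\partial_j(B^{-1}) = -B^{-1}(\partial_j B)B^{-1}$ from (3); no new ideas are needed.
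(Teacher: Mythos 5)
Your proposal is correct and follows essentially the same route as the paper: (1) and (2) by direct manipulation of difference quotients using that $\alpha_{te_j}$ is a norm-isometric $*$-automorphism, (4) as a formal consequence of (2) and (3), and (3) from the identity $\alpha_{te_j}(B^{-1})=\alpha_{te_j}(B)^{-1}$ together with a norm estimate. The only cosmetic difference is in (3), where you use the resolvent-type identity plus norm-continuity of inversion (Neumann series), whereas the paper rearranges the difference quotient so that only the fixed factor $\|B^{-1}\|^{2}$ is pulled out and continuity of inversion is never explicitly invoked; both arguments yield the same limit $-B^{-1}(\partial_j B)B^{-1}$.
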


\begin{proof}
    (1) is clear. The proof of the product rule (2) follows analogously to the classical product rule for the derivative of functions. The quotient rule (4) follows immediately from the product rule and the derivative of inverse formula. Hence, only (3) needs to be addressed. Note that, if we assume that $B^{-1}$ is differentiable, we obtain from the product rule
    \begin{align*}
        0 = \partial_j (BB^{-1}) = (\partial_j B) B^{-1} + B (\partial_j B^{-1})
    \end{align*}
    such that $\partial_j B^{-1} = -B^{-1} (\partial_j B)B^{-1}$. 
    Thus, we have to verify that the differential quotient of $B^{-1}$ converges to this expression in norm. Using that $\alpha_z(B^{-1}) = \alpha_z(B)^{-1}$, we have:
    \begin{align*}
        &\left  \| \frac{\alpha_{te_j}(B^{-1}) - B^{-1}}{t} + B^{-1}\partial_j(B) B^{-1} \right \| \\
        &\hspace{2cm}\leq \| B^{-1}\|^2 \left \| \frac{B\alpha_{te_j}(B)^{-1} B - B}{t} + \partial_j(B) \right \|\\
        &\hspace{2cm}= \| B^{-1}\|^2 \left \| \frac{\alpha_{-te_j}(B) B^{-1} \alpha_{-te_j}(B) - \alpha_{-te_j}(B)}{t} + \alpha_{-te_j}(\partial_j B)\right \|\\
        &\hspace{2cm}= \| B^{-1}\|^2 \left \| \alpha_{-te_j}(B) B^{-1}\left [ \frac{ \alpha_{-te_j}(B) - B}{t}\right ] + \alpha_{-te_j}(\partial_j B) \right \|
    \end{align*}
    Now, we have (with convergence in operator norm): $\alpha_{te_j}(B) \to B$ such that $\alpha_{-te_j}(B)B^{-1} \to I$, $(\alpha_{-te_j}(B) - B)/t \to -\partial_j B$, $\alpha_{-te_j}(\partial_j B) \to \partial_j B$. Hence, the whole expression on the right-hand side converges to $0$.
\end{proof}

Of course, analogous formula\lvl e for the product and quotient rule carry over to higher order derivatives. We leave it to the interested reader to formulate the precise identities. An immediate consequence of the product and quotient rule is the following:
\begin{corollary} Let $k \in \mathbb N_0$.
    \begin{enumerate}
        \item $C^k(\mathcal H)$ is a Banach$^\ast$-algebra.
        \item $C^k(\mathcal H)$ is spectrally invariant in $\mathcal L(\mathcal H)$.
    \end{enumerate}
\end{corollary}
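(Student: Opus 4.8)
My plan is to derive both statements from the previous Lemma by elementary bookkeeping, using two facts the excerpt already grants: $C(\mathcal H)$ is a $C^\ast$-algebra (in particular closed under products and adjoints), and $I \in C^k(\mathcal H)$ for every $k$ because $\alpha_z(I) = I$ is constant in $z$, so all its derivatives vanish. I would prove (1) first and then feed it into an induction on $k$ for (2); the two parts genuinely have to be done in this order.

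For (1), the starting point is the higher-order Leibniz rule
\[
  \partial^\alpha(AB) = \sum_{\beta \leq \alpha} \binom{\alpha}{\beta}\,(\partial^\beta A)(\partial^{\alpha-\beta}B), \qquad |\alpha| \leq k,
\]
obtained from the product rule of the Lemma by induction on $|\alpha|$ (with $\binom{\alpha}{\beta} = \prod_j \binom{\alpha_j}{\beta_j}$). Each $\partial^\beta A$ and $\partial^{\alpha-\beta}B$ lies in $C(\mathcal H)$, and $C(\mathcal H)$ is closed under products, so every summand lies in $C(\mathcal H)$; hence $AB \in C^k(\mathcal H)$ and $C^k(\mathcal H)$ is an algebra. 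Estimating the identity and using $\sum_{\beta\leq\alpha}\binom{\alpha}{\beta} = 2^{|\alpha|}$ yields $\|AB\|_{C^k} \leq 2^k \|A\|_{C^k}\|B\|_{C^k}$, so the equivalent norm $A \mapsto 2^k\|A\|_{C^k}$ (or the weighted norm $\sum_{|\alpha|\leq k}\tfrac{1}{\alpha!}\|\partial^\alpha A\|_{op}$, which is directly submultiplicative) turns $C^k(\mathcal H)$ into a Banach algebra, completeness being already known. For the $\ast$-structure, $W_z^\ast = W_{-z}$ gives $\alpha_z(A^\ast) = (W_z A W_{-z})^\ast = W_{-z}^\ast A^\ast W_z^\ast = W_z A^\ast W_{-z} = \alpha_z(A)^\ast$; since taking adjoints is a norm-isometric involution it commutes with the difference quotients defining $\partial_j$, so iterating gives $\partial^\alpha(A^\ast) = (\partial^\alpha A)^\ast$. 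Thus $A \in C^k(\mathcal H) \Rightarrow A^\ast \in C^k(\mathcal H)$ with $\|A^\ast\|_{C^k} = \|A\|_{C^k}$, and $C^k(\mathcal H)$ is a Banach$^\ast$-algebra.

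For (2), I would use that for a unital Banach$^\ast$-subalgebra of a $C^\ast$-algebra, spectral invariance is equivalent to: whenever $A \in C^k(\mathcal H)$ is invertible in $\mathcal L(\mathcal H)$, the inverse $A^{-1}$ again lies in $C^k(\mathcal H)$. I prove this by induction on $k$. For $k=0$: $\alpha_z(A^{-1}) = \alpha_z(A)^{-1}$ and inversion is norm-continuous on the invertible elements of $\mathcal L(\mathcal H)$, so $z \mapsto \alpha_z(A^{-1})$ is norm-continuous, i.e.\ $A^{-1} \in C(\mathcal H)$. For the step, let $A \in C^k(\mathcal H)$ be invertible in $\mathcal L(\mathcal H)$; then $A \in C^{k-1}(\mathcal H)$, so $A^{-1} \in C^{k-1}(\mathcal H)$ by the induction hypothesis. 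By part (3) of the Lemma, $\partial_j A^{-1} = -A^{-1}(\partial_j A)A^{-1}$, and the right-hand side is a product of elements of $C^{k-1}(\mathcal H)$ — namely $A^{-1}$ and $\partial_j A$ — hence it lies in $C^{k-1}(\mathcal H)$ by part (1). Therefore every derivative of $A^{-1}$ of order $\leq k$ exists and lies in $C(\mathcal H)$, so $A^{-1} \in C^k(\mathcal H)$.

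The proof is essentially free once the Lemma is available; the only points that need a little care — and none of which is a serious obstacle — are the renorming used to make the $C^k$-norm genuinely submultiplicative (the naive max-norm fails because of the binomial constants in Leibniz), the appeal to $C(\mathcal H)$ being an algebra (a QHA fact rather than something one reads off the definition), and the observation that the induction step for spectral invariance already invokes that $C^{k-1}(\mathcal H)$ is an algebra, which is why (1) must precede (2).
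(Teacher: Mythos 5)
Your proposal is correct and follows essentially the same route as the paper: part (1) comes from the product rule (your explicit Leibniz expansion and submultiplicativity/renorming remarks are extra care the paper leaves implicit), and part (2) propagates the derivative-of-inverse formula $\partial_j A^{-1}=-A^{-1}(\partial_j A)A^{-1}$ through the orders of differentiability, exactly as the paper does via the quotient rule. The only (cosmetic) deviation is the base case $k=0$, which you settle by norm-continuity of inversion on the invertibles of $\mathcal L(\mathcal H)$, whereas the paper invokes spectral permanence of the $C^\ast$-subalgebra $C^0(\mathcal H)$; your induction on $k$ replaces the paper's transitivity argument through $C^0(\mathcal H)$, with identical content.
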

\begin{proof}
    (1) follows immediately from the product rule. 
    Regarding (2), we note that $C^0(\mathcal H)$ is spectrally invariant since it is a $C^\ast$ subalgebra of $\mathcal L(\mathcal H)$.
    For $k \geq 1$, $C^k(\mathcal H)$ is spectrally invariant as a subalgebra of $C^0(\mathcal H)$ by the quotient rule, hence by spectral invariance for $k = 0$ it is also spectrally invariant in $\mathcal L(\mathcal H)$. 
    \end{proof}
Now, one can consider the set of smooth elements:
\begin{align*}
    C^\infty(\mathcal H) = \{ A \in \mathcal L(\mathcal H): ~\partial^\alpha A \text{ exists for all } \alpha \in \mathbb N_0^{2d}\}.
\end{align*}
Note that we don't have to explicitly ask for the derivatives to be in $C^0(\mathcal H)$: Since the derivatives are again differentiable, they are automatically contained in $C^0(\mathcal H)$. From the properties of $C^k(\mathcal H)$ one obtains directly:
\begin{corollary}
    $C^\infty(\mathcal H)$ is a Fr\'{e}chet algebra with the family of seminorms $\| A\|_\alpha := \| \partial^\alpha A\|_{op}$, $\alpha\in\NN_0^{2d}$. It is a dense subalgebra of $C^0(\mathcal H)$ which is spectrally invariant in $\mathcal L(\mathcal H)$.
\end{corollary}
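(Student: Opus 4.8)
The plan is to obtain all three assertions from the Banach-space and Banach-$\ast$-algebra structure of the spaces $C^k(\mathcal H)$ already established, the only genuinely new ingredient being a mollification argument for density. For the \emph{Fr\'echet algebra} part I would note that $\{\|\cdot\|_\alpha\}_{\alpha\in\NN_0^{2d}}$ is a countable, separating family (since $\|\cdot\|_0=\|\cdot\|_{op}$), and that for each $k$ the finite maximum $\max_{|\alpha|\le k}\|\cdot\|_\alpha$ is precisely the norm $\|\cdot\|_{C^k}$ making $C^k(\mathcal H)$ complete; hence $C^\infty(\mathcal H)=\bigcap_k C^k(\mathcal H)$ is the projective limit of the Banach spaces $C^k(\mathcal H)$ along the contractive inclusions $C^{k+1}(\mathcal H)\hookrightarrow C^k(\mathcal H)$, so it is a Fr\'echet space. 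Joint continuity of multiplication follows by iterating the product rule to the Leibniz identity $\partial^\alpha(AB)=\sum_{\beta\le\alpha}\binom{\alpha}{\beta}(\partial^\beta A)(\partial^{\alpha-\beta}B)$, which gives $\|AB\|_\alpha\le\sum_{\beta\le\alpha}\binom{\alpha}{\beta}\|A\|_\beta\|B\|_{\alpha-\beta}$; combined with $\partial^\alpha(A^\ast)=(\partial^\alpha A)^\ast$, this makes $C^\infty(\mathcal H)$ a Fr\'echet $\ast$-algebra, and it is a subalgebra of $C^0(\mathcal H)$ since every $\partial^\alpha A$ is again differentiable.

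For \emph{density}, given $A\in C^0(\mathcal H)$ and $\phi\in C_c^\infty(\RR^{2d})$ with $\int\phi=1$, I would form the Bochner integral $A_\phi:=\int_{\RR^{2d}}\phi(z)\,\alpha_z(A)\,dz\in\mathcal L(\mathcal H)$, which converges because $z\mapsto\alpha_z(A)$ is norm-continuous and bounded ($\|\alpha_z(A)\|_{op}=\|A\|_{op}$). Since $\alpha$ is a genuine action ($\alpha_w\circ\alpha_z=\alpha_{w+z}$, as the phases in \eqref{eq:ccr} cancel), one has $\alpha_w(A_\phi)=\int\phi(z-w)\,\alpha_z(A)\,dz$, and differentiating under the integral (dominated convergence, using that $\phi$ is smooth with fixed compact support) yields $\partial^\alpha A_\phi=(-1)^{|\alpha|}\int(\partial^\alpha\phi)(z)\,\alpha_z(A)\,dz$; each such operator $A_\psi$ ($\psi\in L^1$) lies in $C^0(\mathcal H)$ because $\|\alpha_w(A_\psi)-A_\psi\|_{op}\le\|A\|_{op}\,\|\psi(\cdot-w)-\psi\|_{L^1}\to0$. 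Hence $A_\phi\in C^\infty(\mathcal H)$, and letting $\phi$ run through an approximate identity $\phi_\varepsilon$, the bound $\|A_{\phi_\varepsilon}-A\|_{op}\le\int|\phi_\varepsilon(z)|\,\|\alpha_z(A)-A\|_{op}\,dz\to0$ gives density.

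For \emph{spectral invariance}, suppose $A\in C^\infty(\mathcal H)$ is invertible in $\mathcal L(\mathcal H)$. Then $A^{-1}\in C^0(\mathcal H)$ since $C^0(\mathcal H)$ is a unital $C^\ast$-subalgebra of $\mathcal L(\mathcal H)$. I would then iterate the derivative-of-inverse formula: by induction on $k$, once $A^{-1}\in C^k(\mathcal H)$, the identity $\partial_j A^{-1}=-A^{-1}(\partial_j A)A^{-1}$ exhibits $\partial_j A^{-1}$ as a product of elements of $C^k(\mathcal H)$ (note $\partial_j A\in C^\infty(\mathcal H)\subseteq C^k(\mathcal H)$), hence $\partial_j A^{-1}\in C^k(\mathcal H)$ because the latter is a Banach algebra, so $A^{-1}\in C^{k+1}(\mathcal H)$; therefore $A^{-1}\in C^\infty(\mathcal H)$.

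The only step that is not a purely formal consequence of what is already in hand is the density argument — concretely, checking that the mollified operator $A_\phi$ is genuinely Heisenberg-smooth, i.e.\ that differentiation transfers onto $\phi$ and that the resulting operators remain in $C^0(\mathcal H)$. Everything else is bookkeeping with the product and quotient rules and with the completeness of each $C^k(\mathcal H)$.
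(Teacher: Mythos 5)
Your proposal is correct and follows essentially the same route as the paper: the Fr\'echet structure is routine bookkeeping with the $C^k(\mathcal H)$ norms, density is obtained by convolving $A$ with a smooth approximate identity (the paper uses $f_n \ast A$ with Schwartz/Gaussian mollifiers, you use $C_c^\infty$ ones and verify the standard QHA convolution properties by hand), and spectral invariance reduces to the derivative-of-inverse/quotient rule already established for the $C^k(\mathcal H)$.
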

\begin{proof}
    The first claim is clear.
    Spectral invariance follows from the spectral invariance of the algebras $C^k(\mathcal H)$. Finally, density of $C^\infty(\mathcal H)$ in $C^0(\mathcal H)$ follows from standard considerations of quantum harmonic analysis: For $f \in L^1(\mathbb R^{2d})$ and $A \in \mathcal L(\mathcal H)$ we let $f \ast A := \int_{\mathbb R^{2d}} f(z) \alpha_z(A)~dz$. We refer to \cite{Fulsche2020,werner84} for standard properties of these convolution operators. 
    If $(f_n)_{n \in \mathbb N}$ is an approximate identity of $L^1(\mathbb R^{2d})$ and $A \in C^0(\mathcal H)$, then $f_n \ast A \to A$ in operator norm. If $f_n$ is, say, contained in the Schwartz functions on $\mathbb R^{2d}$ (e.g., it is a sequence of appropriate Gaussians), then $\partial^\alpha (f_n \ast A) = (\partial^\alpha f_n) \ast A$ such that $f_n \ast A \in C^\infty(\mathcal H)$.
\end{proof}
Note that, as an obvious consequence, $C^k(\mathcal H)$ is also dense in $C^0(\mathcal H)$.

Having already defined the shift of an operator, we define the shift of a function $f: \mathbb R^{2d} \to \mathbb C$ by 
\begin{align*}
    \alpha_z(f) = f( \ \cdot\, - z), \quad z \in \mathbb R^{2d}.
\end{align*}
For a function $f$ on $\mathbb R^{2d}$, we define its \emph{symplectic Fourier transform} by
\begin{align*}
    \mathcal F_\sigma(f)(w) = \frac{1}{(2\pi)^d}\int_{\mathbb R^{2d}}f(z) e^{i\sigma(z, w)}\,\mathrm{d}z.
\end{align*}
This expression is of course well-defined for $f \in L^1(\mathbb R^{2d})$, mapping $L^1(\mathbb R^{2d})$ to $C_0(\mathbb R^{2d})$ and $\mathcal S(\mathbb R^{2d})$ to $\mathcal S(\mathbb R^{2d})$. 
As is well-known, $\mathcal F_\sigma$ extends to an isometric isomorphism of $L^2(\mathbb R^{2d})$ and to a topological isomorphism of $\mathcal S'(\mathbb R^{2d})$. 
Further, the symplectic Fourier transform satisfies $\mathcal F_\sigma(\alpha_z(f)) = \gamma_z \mathcal F_\sigma(f)$, where $\gamma_z \lvl(g\lvl)$ is the \emph{modulation of $g$ by $z$}:
\begin{align*}
    \gamma_z(g)(w) = e^{i\sigma(z, w)}g(w).
\end{align*}
In general, i.e.\ for $f \in \mathcal S'(\mathbb R^{2d})$, these equalities have to be interpreted weakly. In \cite{Berge_Berge_Fulsche}, a convenient notion of the \emph{modulation} of an operator was introduced that will turn out useful in our discussion:
\begin{align*}
    \gamma_z(A) := W_{\frac{z}{2}} A W_{\frac{z}{2}}.
\end{align*}

For an operator $A \in \mathcal T^1(\mathcal H)$, where $\mathcal T^1(\mathcal H)$ denotes the trace class operators over $\mathcal H$, we define its Fourier-Weyl transform (also known as Fourier-Wigner transform) by
\begin{align*}
    \mathcal F_W(A)(w) = \tr(AW_w^\ast).
\end{align*}
This Fourier-Weyl transform is known to have a number of good properties, showing that it is a natural object to consider. We give some examples:
\begin{theorem}
\begin{enumerate}[(1)]
    \item $\mathcal F_W$ maps $\mathcal T^1(\mathcal H)$ injectively and continuously to $C_0(\mathbb R^{2d})$.
    \item $\mathcal F_W$ extends to a unitary map from $\mathcal T^2(\mathcal H)$, the Hilbert-Schmidt operators, to $L^2(\mathbb R^{2d})$.
    \item $\mathcal F_W$ yields a topological isomorphism from $\mathcal S(\mathcal H)$, the Schwartz operators, to $\mathcal S(\mathbb R^{2d})$. 
    \item $\mathcal F_W$ extends to a topological isomorphism from $\mathcal S'(\mathcal H)$, the tempered operators, to $\mathcal S'(\mathcal H)$.
    \item $\mathcal F_W$ satisfies $\mathcal F_W(\alpha_z(A)) = \gamma_z (\mathcal F_W(A))$ as well as $\mathcal F_W(\gamma_z(A)) = \alpha_z(\mathcal F_W(A))$.
\end{enumerate}
\end{theorem}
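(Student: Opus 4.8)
The plan is to dispatch the five assertions in the order (5), (1), (2), (3), (4): the intertwining relations in (5) are purely algebraic and underpin everything else, and the analytic difficulty increases as one moves from trace class through Hilbert--Schmidt to Schwartz operators, after which (4) is a formal duality.

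For (5) I would simply expand $\mathcal F_W(\alpha_z(A))(w) = \tr(W_z A W_{-z} W_{-w})$, use cyclicity of the trace to bring $W_z$ to the front, and collapse $W_{-z} W_{-w} W_z$ via the CCR \eqref{eq:ccr} together with $W_w^\ast = W_{-w}$; the three cocycle phases combine to $e^{i\sigma(z,w)}$, which is exactly the symbol of $\gamma_z$. The second identity $\mathcal F_W(\gamma_z(A)) = \alpha_z(\mathcal F_W(A))$ is obtained the same way from $\gamma_z(A) = W_{z/2} A W_{z/2}$, where the phases now cancel and one is left with $\tr(A W_{w-z}^\ast) = \mathcal F_W(A)(w-z)$. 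For (1): the estimate $|\mathcal F_W(A)(w)| = |\tr(A W_w^\ast)| \le \|A\|_{\mathcal T^1}$ gives boundedness of $\mathcal F_W \colon \mathcal T^1(\mathcal H) \to \ell^\infty(\mathbb R^{2d})$, and continuity of $w \mapsto \tr(A W_w^\ast)$ follows from strong continuity of the Weyl system and approximation of $A$ in trace norm by finite-rank operators. By the same density argument it suffices, for the $C_0$-property, to treat rank-one $A = f \otimes g$, for which $\mathcal F_W(A)(w) = \langle W_w g, f\rangle$ is (up to a reflection) the cross-ambiguity function of $f, g \in L^2(\mathbb R^d)$, which is classically known to lie in $C_0(\mathbb R^{2d})$. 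Injectivity follows since $\tr(A W_w^\ast) = 0$ for all $w$ forces $A$ to annihilate the weak-$\ast$ dense linear span of the Weyl operators.

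For (2) the key input is Moyal's orthogonality identity for the Weyl system, which — with the normalization fixed by \eqref{eq:weyl_ops} — reads $\int_{\mathbb R^{2d}} \langle W_w g_1, f_1\rangle \overline{\langle W_w g_2, f_2\rangle}\, dw = (2\pi)^d \langle g_1, g_2\rangle \overline{\langle f_1, f_2\rangle}$. Combined with the rank-one formula from (1), this says precisely that on finite-rank operators $\mathcal F_W$ is, up to the standard normalizing constant, an isometry from the Hilbert--Schmidt inner product to the $L^2(\mathbb R^{2d})$ inner product; since finite-rank operators are dense in $\mathcal T^2(\mathcal H)$ it extends to an isometry, and surjectivity follows because the range is closed and dense (e.g. it contains the cross-ambiguity functions of Hermite functions, which form an orthonormal basis of $L^2(\mathbb R^{2d})$, or alternatively all Schwartz functions once (3) is known). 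One should of course check that the stated normalization makes the extension literally unitary rather than a scalar multiple of a unitary.

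For (3), which I expect to be the main obstacle, I would use a seminorm description of the Schwartz operators $\mathcal S(\mathcal H)$ as in \cite{keyl_kiukas_werner16}. Differentiating the intertwiners of (5) at $z = 0$ shows that $\mathcal F_W$ transports the operator phase-space derivations $\partial_j$ to multiplication by the (symplectically relabeled) coordinate functions on $\mathbb R^{2d}$, and transports the generators of the operator modulations $\gamma$ to the honest partial derivatives on $\mathbb R^{2d}$. Hence the mixed quantities $\|\,(\text{coordinate monomial}) \cdot \partial^\beta \mathcal F_W(A)\,\|$ correspond to operator-theoretic seminorms of $A$; together with the classical fact that the Schwartz topology on $\mathbb R^{2d}$ is generated by the seminorms $\sup_w |x^\alpha \partial^\beta g(w)|$ (equivalently with multiplication and differentiation in the other order) and a Sobolev-type estimate to pass between $L^2$-, $L^\infty$- and operator-norm versions, this yields continuity of $\mathcal F_W$ and of its inverse between $\mathcal S(\mathcal H)$ and $\mathcal S(\mathbb R^{2d})$; bijectivity is inherited from (2). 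The delicate point is the precise bookkeeping of which operator seminorms match which Schwartz seminorms, and in a fully self-contained write-up I would either carry this out in detail or cite it. Finally (4) is formal: $\mathcal S'(\mathcal H)$ is by definition the (strong) dual of $\mathcal S(\mathcal H)$, so the transpose of the isomorphism from (3), set up through the Hilbert--Schmidt pairing of (2) so that it genuinely extends $\mathcal F_W$, is the asserted topological isomorphism onto $\mathcal S'(\mathbb R^{2d})$ — note that the codomain in the statement should read $\mathcal S'(\mathbb R^{2d})$ rather than $\mathcal S'(\mathcal H)$.
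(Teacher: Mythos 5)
The paper does not actually prove this theorem: it is presented as a list of known facts of quantum harmonic analysis, with the references \cite{werner84, keyl_kiukas_werner16, luef_eirik2018, Fulsche_Galke2023} standing in for proofs (in particular, \cite{keyl_kiukas_werner16} for the Schwartz-operator statements). So your proposal cannot coincide with ``the paper's proof''; judged on its own, it is essentially correct and follows the standard literature arguments. Items (5), (1) and (2) are fine as sketched: the phase bookkeeping in (5) does give $e^{i\sigma(z,w)}$ for the $\alpha_z$ identity and cancellation for the $\gamma_z$ identity with the paper's conventions; the $C_0$ and injectivity arguments in (1) are the usual ones; and your caveat on (2) is well taken — with Lebesgue measure $dw$ and the definition \eqref{eq:weyl_ops}, Moyal's identity gives $\|\mathcal F_W(A)\|_{L^2}^2=(2\pi)^d\|A\|_{\mathcal T^2}^2$, so literal unitarity presupposes the QHA-normalized phase-space measure $dw/(2\pi)^d$ (the same normalization that makes $\mathcal F_W^{-1}(f)=\int f(w)W_w\,dw$ the actual inverse). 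You also correctly spot the typo in (4): the codomain should be $\mathcal S'(\mathbb R^{2d})$.

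The one genuinely soft spot is (3), where you yourself flag the missing seminorm bookkeeping. Your route (transporting the derivations $\partial_j$ and the modulation generators through the intertwiners of (5) and matching seminorm systems) can be carried out, but it is the laborious option. A cleaner argument — and the one implicit in the paper, which records right after the theorem that $\mathcal S(\mathcal H)$ consists exactly of the operators with integral kernel in $\mathcal S(\mathbb R^{2d})$ — is to write $\mathcal F_W(A)$ in terms of the kernel $k_A$: a direct computation with \eqref{eq:weyl_ops} gives $\mathcal F_W(A)(x,\xi)=\int e^{-i\xi(t-\frac{x}{2})}k_A(t,t-x)\,dt$ (up to the chosen normalization), i.e.\ a linear change of variables composed with a partial Fourier transform, both of which are topological automorphisms of $\mathcal S(\mathbb R^{2d})$. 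This yields (3) at once, and then (4) follows by transposition exactly as you describe (with the mild care about the antilinear Hilbert--Schmidt pairing that you already note).
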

The space $\mathcal S(\mathcal H)$ of Schwartz operators was discussed in close detail in \cite{keyl_kiukas_werner16}.
It consists precisely of the continuous linear operators $\mathcal S'(\mathbb R^d) \to \mathcal S(\mathbb R^d)$ and $\mathcal S'(\mathcal H)$ consists of the continuous linear operators $\mathcal S(\mathbb R^d) \to \mathcal S'(\mathbb R^d)$. Equivalently, $\mathcal S(\mathcal H)$ consists of those operators with integral kernel in $\mathcal S(\mathbb R^{2d})$ and $\mathcal S'(\mathcal H)$ consists of operators with integral kernels in $\mathcal S'(\mathbb R^{2d})$.  
We have $\mathcal S(\mathcal H) \subset \mathcal T^1(\mathcal H)$ and $\mathcal L(\mathcal H) \subset \mathcal S'(\mathcal H)$. Both $\mathcal S(\mathcal H)$ and $\mathcal S'(\mathcal H)$ can be topologized in a natural way, cf.\ \cite{keyl_kiukas_werner16} for a detailed discussion. 

The Fourier-Weyl transform $\mathcal F_W$ also satisfies some forms of the convolution theorems as well as Bochner's theorem (cf.~\cite{werner84, luef_eirik2018, Fulsche_Galke2023}), which will nevertheless play no role here and are therefore omitted. Since $\mathcal F_W$ is an isomorphism on several levels, one may ask what the inverse map looks like. For appropriate functions, say $f \in L^1(\mathbb R^{2d})$, the inverse Fourier-Weyl transform $\mathcal F_W^{-1}(f)$ can simply be written as
\begin{align*}
\mathcal F_W^{-1}(f) = \int_{\mathbb R^{2d}} f(w) W_{w}~dw,
\end{align*}
interpreted as an integral in strong operator topology. Clearly, $\mathcal \|F_W^{-1}(f)\|_{op} \leq \| f\|_{L^1}$. At this point, we want to emphasize that the conventions of quantum harmonic analysis might be confusing to a person not used to them, simply for the reason that what we call $\mathcal F_W^{-1}$, the inverse Fourier-Weyl transform, is usually known as the group Fourier transform of the projective representation $W_z$. In quantum harmonic analysis, our conventions seem more favorable (even though this is clearly only a matter of taste), as $\mathcal F_W$ appears naturally in the Gelfand transform of the commutative Banach algebra $L^1(\mathbb R^{2d}) \oplus \mathcal T^1(\mathcal H)$, cf. \cite[Section 3.1]{Berge_Berge_Fulsche} for details.

Another interesting feature, which will be of crucial importance to this discussion, is the following:
\begin{lemma}\label{lem:WeylQuant}
We have $\mathcal F_W^{-1} \circ \mathcal F_\sigma = \mathrm{op}^w$, the Weyl quantization. 
\end{lemma}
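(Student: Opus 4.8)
The plan is to verify the identity on Schwartz symbols and then extend by continuity. Both $\operatorname{op}^w$ and $\mathcal F_W^{-1}\circ\mathcal F_\sigma$ are continuous linear maps $\mathcal S(\mathbb R^{2d})\to\mathcal S(\mathcal H)$: for the composition this is immediate from part~(3) of the theorem on the Fourier--Weyl transform above together with the fact that $\mathcal F_\sigma$ is a topological isomorphism of $\mathcal S(\mathbb R^{2d})$, and for $\operatorname{op}^w$ it is classical. Likewise, both extend to weak-$\ast$ continuous maps $\mathcal S'(\mathbb R^{2d})\to\mathcal S'(\mathcal H)$ (using part~(4) of that theorem for $\mathcal F_W^{-1}$). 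Since $\mathcal S(\mathbb R^{2d})$ is weak-$\ast$ dense in $\mathcal S'(\mathbb R^{2d})$, it suffices to prove $\mathcal F_W^{-1}(\mathcal F_\sigma a)=\operatorname{op}^w(a)$ for $a\in\mathcal S(\mathbb R^{2d})$.

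Fix such an $a$ and set $T_a:=\mathcal F_W^{-1}(\mathcal F_\sigma a)=\int_{\mathbb R^{2d}}(\mathcal F_\sigma a)(w)\,W_w\,\mathrm{d}w$, the integral taken in the strong operator topology. Pairing with $f,g\in\mathcal S(\mathbb R^d)$ gives $\langle T_a f,g\rangle=\int_{\mathbb R^{2d}}(\mathcal F_\sigma a)(w)\,\langle W_w f,g\rangle\,\mathrm{d}w$, where $\mathcal F_\sigma a$ and $w\mapsto\langle W_w f,g\rangle$ are both Schwartz functions on $\mathbb R^{2d}$, so that after inserting the definition \eqref{eq:weyl_ops} of $W_w$ and the integral formula for $\mathcal F_\sigma$ all resulting multiple integrals are absolutely convergent and Fubini applies. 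Writing $w=(y,\eta)$ and $\langle W_{(y,\eta)}f,g\rangle=\int_{\mathbb R^d}e^{ix\cdot\eta-iy\cdot\eta/2}f(x-y)\overline{g(x)}\,\mathrm{d}x$, one carries out first the $\eta$-integration (which produces a Dirac delta forcing the first symbol argument to the midpoint $\tfrac{x+(x-y)}{2}$) and then the remaining delta, arriving at $\langle T_a f,g\rangle=\int_{\mathbb R^d}\!\int_{\mathbb R^d}K(x,x')f(x')\overline{g(x)}\,\mathrm{d}x'\,\mathrm{d}x$ with $K(x,x')=c_d\int_{\mathbb R^d}a\bigl(\tfrac{x+x'}{2},\xi\bigr)e^{i(x-x')\cdot\xi}\,\mathrm{d}\xi$ for a dimensional constant $c_d$ that is a power of $2\pi$. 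This is precisely the Schwartz kernel of $\operatorname{op}^w(a)$, so $T_a=\operatorname{op}^w(a)$ on $\mathcal S(\mathbb R^d)$ and hence, by density, on all of $\mathcal H$.

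More in the spirit of the paper, one can bypass kernels: the matrix coefficient $w\mapsto\langle W_w f,g\rangle$ coincides, by a direct computation from \eqref{eq:weyl_ops}, with $\mathcal F_\sigma$ applied to the cross-Wigner function $W(g,f)$ -- the classical identity relating the (radar) ambiguity function to the Wigner distribution. Plugging this into $\langle T_a f,g\rangle=\int_{\mathbb R^{2d}}(\mathcal F_\sigma a)(w)\,(\mathcal F_\sigma W(g,f))(w)\,\mathrm{d}w$ and invoking the Parseval identity for the (essentially involutive) symplectic Fourier transform turns the right-hand side into $\int_{\mathbb R^{2d}}a(z)\,W(g,f)(z)\,\mathrm{d}z$, up to a reflection absorbed into the conventions, which is the defining expression for $\langle\operatorname{op}^w(a)f,g\rangle$ in terms of the Wigner distribution. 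Either route gives the claim.

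The analytic content is routine: everything in sight is Schwartz, so the interchanges of integration and the reduction to a weak-$\ast$ dense subspace are standard. The only genuinely delicate point is the bookkeeping of the powers of $2\pi$ and of the sign/reflection conventions -- one must track the normalization of $\mathcal F_\sigma$ fixed above, the constant-free definition $\mathcal F_W^{-1}(f)=\int_{\mathbb R^{2d}} f(w)W_w\,\mathrm{d}w$, and whichever normalization of $\operatorname{op}^w$ is taken as the reference, so that the constant $c_d$ in the kernel above equals the one in that reference definition rather than leaving a stray factor. Fixing these conventions consistently at the outset is what makes the identity hold on the nose.
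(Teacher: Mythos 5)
Your proposal is correct and follows essentially the same route as the paper: reduce to Schwartz symbols, pair weakly with Schwartz test functions, insert the definitions of $\mathcal F_\sigma$ and $W_w$, and let the $\eta$-integration produce the delta that forces the midpoint argument, then extend by continuity to $\mathcal S'(\mathbb R^{2d})$. With the paper's normalizations (the $(2\pi)^{-d}$ in $\mathcal F_\sigma$, the constant-free $\mathcal F_W^{-1}$, and the prefactor-free $\operatorname{op}^w$) the constant $c_d$ you leave generic is exactly $1$, so the identity indeed holds on the nose.
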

This statement is certainly well-known. Nevertheless, the proof, which goes along explicit computations, is rarely explicitly given. Since we want to be as explicit as possible during this section of the paper, we give the proof for the reader's convenience.
\begin{proof}
We need to prove the equality $\mathcal F_W^{-1}(\mathcal F_\sigma(f)) = \mathrm{op}^w(f)$ only for $f \in \mathcal S(\mathbb R^{2d})$. By the continuity properties of the involved maps, it then extends to $\mathcal S'(\mathbb R^{2d})$. Let $\varphi \in \mathcal S(\mathbb R^d)$. The following computations have to be understood in the weak sense (i.e., they are valid upon pairing with another Schwartz function). For this one occasion, we will write $w = (x, \eta)$ and $z = (y, \xi)$. Now:
\begin{align*}
    \mathcal F_W^{-1}(\mathcal F_\sigma(f)) \varphi(t) &= \int_{\mathbb R^{2d}} \mathcal F_\sigma(f)(w) W_{w}(\varphi)(t)~dz\\
    &= \frac{1}{(2\pi)^d}\int_{\mathbb R^d} \int_{\mathbb R^d} \int_{\mathbb R^d} \int_{\mathbb R^d} f(y,\xi) e^{i\sigma((y, \xi), (x, \eta)} e^{i\eta \cdot t - i\frac{x \cdot \eta}{2}} \varphi(t-x)~dy~d\eta ~dx~d\xi
    \intertext{We substitute $t-x \mapsto x$ to obtain:}
    &= \frac{1}{(2\pi)^d} \int_{\mathbb R^d} \int_{\mathbb R^d} \varphi(x) e^{i\xi \cdot (t-x)} \int_{\mathbb R^d} f(y,\xi) \int_{\mathbb R^d} e^{i\eta (\frac{t+x}{2} - y)}~d\eta ~dy~dx~d\xi\\
    &= \int_{\mathbb R^d} \int_{\mathbb R^d}\varphi(x) e^{i\xi \cdot (t-x)} \int_{\mathbb R^d} f(y,\xi) ~d\delta_{\frac{t+x}{2}}(y)~dx~d\xi\\
    &= \int_{\mathbb R^d} \int_{\mathbb R^d} f(\frac{x+t}{2}, \xi) e^{i\xi \cdot (t-x)} \varphi(x)~dx~d\xi.
    \end{align*}
    This shows the equality in question.
\end{proof}
We obtain immediately:
\begin{corollary}
    $\alpha_z(\mathrm{op}^w(f)) = \mathrm{op}^w(\alpha_{z}(f))$ and $\gamma_\xi(\operatorname{op}^w(f)) = \operatorname{op}^w(\gamma_\xi(f))$.
\end{corollary}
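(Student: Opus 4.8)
The plan is to obtain both identities with essentially no computation, simply by combining Lemma~\ref{lem:WeylQuant} with the covariance relations for $\mathcal F_\sigma$ and $\mathcal F_W$ already recorded above. The only thing requiring care is the bookkeeping: $\mathcal F_W$, just like $\mathcal F_\sigma$, interchanges shifts and modulations, so on the inverse side the correspondence is again crossed.

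First I would record the relations in the exact form needed. From part~(5) of the theorem on $\mathcal F_W$, applied to $A = \mathcal F_W^{-1}(g)$, one reads off
\[
\alpha_z\bigl(\mathcal F_W^{-1}(g)\bigr) = \mathcal F_W^{-1}\bigl(\gamma_z(g)\bigr), \qquad \gamma_z\bigl(\mathcal F_W^{-1}(g)\bigr) = \mathcal F_W^{-1}\bigl(\alpha_z(g)\bigr), \qquad g \in \mathcal S'(\mathbb R^{2d}).
\]
On the function side, the excerpt already states $\mathcal F_\sigma(\alpha_z f) = \gamma_z(\mathcal F_\sigma f)$; since $\mathcal F_\sigma^2 = \mathrm{id}$ (immediate from its definition), conjugating this relation by $\mathcal F_\sigma$ gives the dual version $\mathcal F_\sigma(\gamma_z f) = \alpha_z(\mathcal F_\sigma f)$.

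Then the first identity is the chain
\[
\alpha_z(\operatorname{op}^w(f)) = \alpha_z\bigl(\mathcal F_W^{-1}(\mathcal F_\sigma f)\bigr) = \mathcal F_W^{-1}\bigl(\gamma_z(\mathcal F_\sigma f)\bigr) = \mathcal F_W^{-1}\bigl(\mathcal F_\sigma(\alpha_z f)\bigr) = \operatorname{op}^w(\alpha_z f),
\]
and the second is the same chain with the roles of $\alpha$ and $\gamma$ interchanged:
\[
\gamma_z(\operatorname{op}^w(f)) = \gamma_z\bigl(\mathcal F_W^{-1}(\mathcal F_\sigma f)\bigr) = \mathcal F_W^{-1}\bigl(\alpha_z(\mathcal F_\sigma f)\bigr) = \mathcal F_W^{-1}\bigl(\mathcal F_\sigma(\gamma_z f)\bigr) = \operatorname{op}^w(\gamma_z f).
\]

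Since every map occurring here is defined on all of $\mathcal S'(\mathbb R^{2d})$ and the intertwining relations hold at that level, no density or approximation argument is needed, and in particular the identities make sense for arbitrary tempered $f$ (hence for any $f\in C_b^\infty$). There is no real obstacle: the only point at which one could slip is the direction of the two $\mathcal F_W$-intertwiners, and once those are fixed as above the corollary drops out immediately.
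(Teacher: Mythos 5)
Your proof is correct and is exactly the argument the paper intends: the corollary is stated as an immediate consequence of Lemma~\ref{lem:WeylQuant} together with the intertwining relations $\mathcal F_W\alpha_z=\gamma_z\mathcal F_W$, $\mathcal F_W\gamma_z=\alpha_z\mathcal F_W$ and $\mathcal F_\sigma\alpha_z=\gamma_z\mathcal F_\sigma$ (plus the involutivity of $\mathcal F_\sigma$), which is precisely the chain you write out. Your bookkeeping of the crossed correspondences on the inverse side is accurate, so nothing further is needed.
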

Having discussed the covariance properties for the Weyl quantization (which are, of course, well-known), we now need to turn to function spaces. When not stated otherwise, proofs of the following facts can be found in the textbook \cite{Grochenig2001}. Fix $0 \neq g \in \mathcal S(\mathbb R^{2d})$. Then, for $f \in \mathcal S(\mathbb R^{2d})$, the short time Fourier transform (STFT) $V_g(f)$ is defined as
\begin{align*}
    V_g(f)(z, w) = \langle f, \gamma_w(\alpha_z(g))\rangle,
\end{align*}
where the pairing is the $L^2$ inner product. Here, $g$ is referred to as the \emph{window function} of the STFT. With the appropriate weak interpretation of this, one can define the STFT for any $f \in \mathcal S'(\mathbb R^{2d})$, given a window $0 \neq g \in \mathcal S(\mathbb R^{2d})$. The STFT is, within this framework, always a continuous function of $z$ and $w$. Note that we differ from the standard conventions of the STFT by a factor $2\pi$, i.e.\ within the usual definition, as it is used e.g.\ in \cite{Grochenig2001}, the STFT would agree with $V_g(f)(z, 2\pi w)$ in our notation. Nevertheless, this difference will cause no problems. When it comes to compatibility with the phase space operations on operators, our convention seems to be favorable.

Using the STFT, one can then go on to define \emph{modulation spaces}. There is a whole zoo of these spaces, and we refer to \cite{Grochenig2001} for a discussion on them. When defining modulation spaces, one fixes a window function $0 \neq g \in \mathcal S(\mathbb R^{2d})$. For our purposes, we only need the following space:
\begin{align*}
    M^{\infty, 1}(\mathbb R^{2d}) := \{ f \in \mathcal S'(\mathbb R^{2d}): V_g(f) \in L^{\infty, 1}(\mathbb R^{4d})\}.
\end{align*}
Here, the mixed norm Lebesgue space $L^{\infty, 1}(\mathbb R^{4d})$ is defined through the following norm: For measurable $h: \mathbb R^{4d} \to \mathbb C$ it is
\begin{align*}
    \| h\|_{L^{\infty, 1}} &= \int_{\mathbb R^{2d}} \|h(\,\cdot\,,w)\|_\infty~dw.
\end{align*}
The space $M^{\infty, 1}(\mathbb R^{2d})$ is now normed by $\| f\|_{M^{\infty, 1}, g} = \| V_g(f)\|_{L^{\infty, 1}}$. One can show that the space is independent of the choice of the window function $g$, and two different windows give rise to equivalent norms. 

The same game of STFTs can now be played with operators---see  \cite{Berge_Berge_Fulsche} or \cite{Dorfler_Luef_McNulty_Skrettingland2024}, where different versions of operator STFTs were already discussed: Fixing a window operator $0 \neq B \in \mathcal S(\mathcal H)$, we define the STFT of $A \in \mathcal S'(\mathcal H)$ as the function
\begin{align*}
    V_B A(z, w) = \langle A, \gamma_w (\alpha_z(B))\rangle.
\end{align*}
Here, the pairing is the natural extension of the Hilbert-Schmidt inner product. We now define
\begin{align*}
    M^{\infty, 1}(\mathcal H) := \{ A  \in \mathcal S'(\mathcal H): V_B A \in L^{\infty, 1}(\mathbb R^{4d})\}.
\end{align*}
Using Plancherel's theorem for $\mathcal F_W$ (resp.\ its extension to the pairing between $\mathcal S(\mathcal H)$ and $\mathcal S'(\mathcal H)$, we see that
\begin{align*}
    \langle A, \gamma_w(\alpha_z(B))\rangle &= \langle \mathcal F_W(A), \mathcal F_W(\gamma_w(\alpha_z(B)))\rangle\\
    &= \langle \mathcal F_W(A), \alpha_w(\gamma_z(\mathcal F_W(B)))\rangle\\
    &= \langle \mathcal F_\sigma \mathcal F_W(A), \gamma_w \alpha_z(\mathcal F_\sigma \mathcal F_W(B))\rangle.
\end{align*}
Since $0 \neq \mathcal F_\sigma ( \mathcal F_W(B)) \in \mathcal S(\mathbb R^{2d})$, we see that $A \in M^{\infty, 1}(\mathcal H)$ if and only if $\mathcal F_\sigma(\mathcal F_W(A)) \in M^{\infty, 1}(\mathbb R^{2d})$. If we norm $M^{\infty, 1}(\mathcal H)$ by $\| A\|_{M^{\infty, 1}, B} = \| V_BA\|_{L^{\infty, 1}}$, the above calculation shows that different choices of $B$ yield equivalent norms. We obtain that $f \mapsto \mathrm{op}^w(f)$ is an isomorphism from $M^{\infty, 1}(\mathbb R^{2d})$ to $M^{\infty, 1}(\mathcal H)$. If we endow both spaces with compatible norms (e.g.\ fix $0 \neq B \in \mathcal S(\mathcal H)$ and $g = \mathcal F_\sigma \mathcal F_W(B)$), then $f \mapsto \mathrm{op}^w(f)$ is even isometrically isomorphic from $M^{\infty, 1}(\mathbb R^{2d})$ to $M^{\infty, 1}(\mathcal H)$.

As the last step, we need to discuss certain embedding results. The first fact we need is:
\begin{lemma}[{\cite[Prop.\ 1.7(3)]{Toft2004}}]\label{lem:embMF}
    $M^{\infty, 1}(\mathbb R^{2d}) \hookrightarrow L^\infty(\mathbb R^{2d})$.
\end{lemma}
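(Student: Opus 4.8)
The plan is to run the standard short-time-Fourier-transform inversion argument, exactly as one proves Wiener-amalgam / modulation-space inclusions in \cite{Grochenig2001}. Fix the window $0\ne g\in\mathcal S(\mathbb R^{2d})$ used to measure the $M^{\infty,1}$-norm. The first ingredient I would invoke is the inversion formula for the STFT: there is a nonzero constant $c_g$, depending only on $g$ and on the normalization of $\gamma_z$, such that every $f\in\mathcal S'(\mathbb R^{2d})$ satisfies
\[
  f \;=\; c_g\int_{\mathbb R^{2d}}\!\!\int_{\mathbb R^{2d}} V_g f(z,w)\,\gamma_w\!\bigl(\alpha_z g\bigr)\,\mathrm{d}z\,\mathrm{d}w ,
\]
with the integral converging weakly in $\mathcal S'(\mathbb R^{2d})$; this is the usual consequence of the orthogonality relations for $V_g$ together with the fact that $V_\gamma\psi\in\mathcal S(\mathbb R^{4d})$ for $\psi\in\mathcal S(\mathbb R^{2d})$ while $V_gf$ has at most polynomial growth.

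Next, assuming $f\in M^{\infty,1}(\mathbb R^{2d})$, the key observation is that a modulation does not change absolute values: $|\gamma_w(\alpha_z g)(u)|=|g(u-z)|$ for all $u,z,w\in\mathbb R^{2d}$. Hence, for each fixed $u$,
\[
  \int_{\mathbb R^{2d}}\!\!\int_{\mathbb R^{2d}} |V_g f(z,w)|\,|\gamma_w(\alpha_z g)(u)|\,\mathrm{d}z\,\mathrm{d}w
  \le \int_{\mathbb R^{2d}}\Bigl(\sup_{z}|V_gf(z,w)|\Bigr)\|g\|_{L^1}\,\mathrm{d}w
  = \|g\|_{L^1}\,\|V_gf\|_{L^{\infty,1}} < \infty .
\]
Thus the right-hand side of the inversion formula converges absolutely for every $u$ and defines a function $\tilde f$ with $\|\tilde f\|_{L^\infty}\le |c_g|\,\|g\|_{L^1}\,\|f\|_{M^{\infty,1},g}$. (Using the Schwartz decay of $g$ to dominate the integrand uniformly for $u$ in compact sets, dominated convergence even gives $\tilde f\in C_b(\mathbb R^{2d})$, but only $\tilde f\in L^\infty$ is needed here.)

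Finally, I would identify $\tilde f$ with $f$: pairing the inversion formula against an arbitrary $\psi\in\mathcal S(\mathbb R^{2d})$ and interchanging the order of integration --- legitimate by Fubini, since $\iint |V_gf(z,w)|\bigl(\int |g(u-z)|\,|\psi(u)|\,\mathrm{d}u\bigr)\mathrm{d}z\,\mathrm{d}w\le\|\psi\|_\infty\,\|g\|_{L^1}\,\|V_gf\|_{L^{\infty,1}}<\infty$ --- yields $\langle\tilde f,\psi\rangle=\langle f,\psi\rangle$, so $f=\tilde f\in L^\infty(\mathbb R^{2d})$ with the displayed bound; continuity of the inclusion then follows from the equivalence of the $M^{\infty,1}$-norms for different windows. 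The only genuinely fiddly point is the bookkeeping around the STFT inversion formula on $\mathcal S'(\mathbb R^{2d})$ and the Fubini step; everything else reduces to the one-line identity $|\gamma_w(\alpha_z g)|=|\alpha_z g|$ followed by Tonelli's theorem and the trivial bound $\int|g(u-z)|\,\mathrm{d}z=\|g\|_{L^1}$.
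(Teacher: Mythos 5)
This lemma is not proved in the paper at all --- it is imported verbatim from Toft \cite[Prop.\ 1.7(3)]{Toft2004} --- so your STFT-inversion argument supplies a proof where the paper only gives a citation. Your route is the standard one and is essentially correct: the weak inversion formula $f=c_g\iint V_gf(z,w)\,\gamma_w(\alpha_z g)\,\mathrm{d}z\,\mathrm{d}w$ on $\mathcal S'$, the unimodularity $|\gamma_w(\alpha_z g)(u)|=|g(u-z)|$, and Tonelli give absolute pointwise convergence with $\|f\|_{L^\infty}\lesssim \|g\|_{L^1}\|V_gf\|_{L^{\infty,1}}$ (and, as you note in passing, even $f\in C_b$), and window-independence of the $M^{\infty,1}$-norm gives continuity of the embedding. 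One displayed estimate is off, though it does not damage the argument: in the Fubini justification you bound $\iint|V_gf(z,w)|\bigl(\int|g(u-z)|\,|\psi(u)|\,\mathrm{d}u\bigr)\mathrm{d}z\,\mathrm{d}w$ by $\|\psi\|_\infty\|g\|_{L^1}\|V_gf\|_{L^{\infty,1}}$; this cannot be right, because you cannot simultaneously replace $|V_gf(z,w)|$ by $\sup_z|V_gf(z,w)|$ and bound the inner integral uniformly in $z$ --- the factor multiplying $\sup_z|V_gf(\cdot,w)|$ must be the $z$-integral of $h(z)=\int|g(u-z)|\,|\psi(u)|\,\mathrm{d}u$, namely $\|g\|_{L^1}\|\psi\|_{L^1}$, not $\|g\|_{L^1}\|\psi\|_\infty$. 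The corrected bound $\|g\|_{L^1}\|\psi\|_{L^1}\|V_gf\|_{L^{\infty,1}}$ is still finite for Schwartz $\psi$, so Fubini applies and the identification $\tilde f=f$ goes through; with that one repair your proof is complete and is essentially the argument behind Toft's and Gr\"ochenig's statements.
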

As is well-known, Weyl pseudodifferential operators with symbols contained in $M^{\infty, 1}(\mathbb R^{2d})$ are bounded in operator norm on $L^2(\mathbb R^d)$, $\| \mathrm{op}^w(f)\|_{op} \lesssim \| f\|_{M^{\infty, 1}, g}$, see \cite[Thm.\ 14.5.2]{Grochenig2001} for a textbook reference or \cite[Sec.\ 3]{Sjostrand1994} for the initial proof. In light of the isomorphism $f \mapsto \mathrm{op}^w(f)$ from $M^{\infty, 1}(\mathbb R^{2n})$ to $M^{\infty, 1}(\mathcal H)$ this proves:
\begin{lemma}\label{lem:embMO}
    $M^{\infty, 1}(\mathcal H) \hookrightarrow \mathcal L(\mathcal H)$.
\end{lemma}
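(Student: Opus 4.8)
The plan is to transport the sharp Calder\'on--Vaillancourt (Sj\"ostrand) boundedness estimate from symbols to operators through the isomorphism $f \mapsto \mathrm{op}^w(f)$ constructed just above the statement. Fix a window operator $0 \neq B \in \mathcal S(\mathcal H)$ and set $g = \mathcal F_\sigma \mathcal F_W(B) \in \mathcal S(\mathbb R^{2d}) \setminus \{0\}$; by the computation preceding the lemma, $\mathrm{op}^w$ is then an isometric isomorphism from $M^{\infty,1}(\mathbb R^{2d})$ (with norm $\|\cdot\|_{M^{\infty,1},g}$) onto $M^{\infty,1}(\mathcal H)$ (with norm $\|\cdot\|_{M^{\infty,1},B}$). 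Hence, given $A \in M^{\infty,1}(\mathcal H) \subseteq \mathcal S'(\mathcal H)$, there is a unique $f \in M^{\infty,1}(\mathbb R^{2d})$ with $A = \mathrm{op}^w(f)$ and $\|f\|_{M^{\infty,1},g} = \|A\|_{M^{\infty,1},B}$.

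Next we invoke the boundedness theorem for Weyl pseudodifferential operators with symbols in $M^{\infty,1}(\mathbb R^{2d})$, recalled above from \cite[Thm.\ 14.5.2]{Grochenig2001} (originally \cite[Sec.\ 3]{Sjostrand1994}): $\|\mathrm{op}^w(f)\|_{op} \lesssim \|f\|_{M^{\infty,1},g}$. Chaining the two facts yields $\|A\|_{op} = \|\mathrm{op}^w(f)\|_{op} \lesssim \|f\|_{M^{\infty,1},g} = \|A\|_{M^{\infty,1},B}$. As $A$ was arbitrary, this simultaneously shows that every element of $M^{\infty,1}(\mathcal H)$ is a bounded operator and that the inclusion into $\mathcal L(\mathcal H)$ is continuous; the dependence on the particular window $B$ is immaterial because the norms $\|\cdot\|_{M^{\infty,1},B}$ for different choices of $B$ are equivalent, as observed earlier.

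There is no genuine obstacle here --- all the analytic substance sits in the cited estimate --- but a little care with conventions is warranted: our STFT differs from the one in \cite{Grochenig2001} by a factor $2\pi$ in the frequency variable, and our Weyl calculus is normalized by $\mathrm{op}^w = \mathcal F_W^{-1}\circ \mathcal F_\sigma$ (Lemma \ref{lem:WeylQuant}). One checks that the boundedness estimate is unaffected: these rescalings amount to a change of window together with an $L^1$-dilation, which alter only the implicit constant and replace the $M^{\infty,1}$-norm by an equivalent one. Should one prefer to argue entirely on the operator side, the same conclusion follows by running Sj\"ostrand's proof in the operator picture directly: expand $A$ through an STFT inversion formula $A = c\iint_{\mathbb R^{4d}} V_B A(z,w)\,\gamma_w(\alpha_z(B))\,\mathrm{d}z\,\mathrm{d}w$, use $\|\gamma_w(\alpha_z(B))\|_{op} = \|B\|_{op}$, bound the $z$-integral for each fixed $w$ by $\|V_B A(\,\cdot\,,w)\|_\infty$ via an almost-orthogonality (Cotlar--Stein/Schur) estimate, and integrate in $w$. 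We favor the transport argument, as it makes the analogy with the function-space picture --- and hence with the Cordes-type results to come --- transparent.
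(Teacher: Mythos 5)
Your argument is correct and is essentially the paper's own proof: the lemma is obtained by transporting the Sj\"ostrand/Gr\"ochenig bound $\|\mathrm{op}^w(f)\|_{op} \lesssim \|f\|_{M^{\infty,1},g}$ through the (isometric) isomorphism $\mathrm{op}^w\colon M^{\infty,1}(\mathbb R^{2d})\to M^{\infty,1}(\mathcal H)$ established just before the statement. Your extra remarks on the $2\pi$ convention and the alternative direct operator-side argument are fine but not needed.
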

Let us denote by $C_b^{2d+1}(\mathbb R^{2d})$ the bounded continuous functions on $\mathbb R^{2d}$ with $2d+1$ continuous and bounded derivatives. Then, we have:
\begin{lemma}[{\cite[Thm.\ 14.5.3]{Grochenig2001}}]\label{lem:embCF}
    $C_b^{2d+1}(\mathbb R^{2d}) \hookrightarrow M^{\infty, 1}(\mathbb R^{2d})$.
\end{lemma}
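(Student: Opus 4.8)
This is a citation to \cite[Thm.\ 14.5.3]{Grochenig2001}, so in the paper one may simply invoke it; for the reader's convenience I would sketch the short argument. The plan is to reduce the claim to a uniform-in-$z$ decay estimate for the STFT in the $w$-variable, and then integrate. Fix any window $0 \neq g \in \mathcal S(\mathbb R^{2d})$; since the $M^{\infty,1}$-norm is independent of the window up to equivalence, this costs nothing. For $f \in C_b^{2d+1}(\mathbb R^{2d}) \subset L^\infty(\mathbb R^{2d}) \subset \mathcal S'(\mathbb R^{2d})$, the STFT is given by the absolutely convergent integral
\begin{align*}
    V_g f(z,w) = \langle f, \gamma_w(\alpha_z(g))\rangle = \int_{\mathbb R^{2d}} f(u)\, e^{-i\sigma(w,u)}\, \overline{g(u-z)}\, \mathrm d u,
\end{align*}
i.e.\ $V_g f(z, \,\cdot\,)$ is (up to the linear change of variables turning $\sigma(w,\,\cdot\,)$ into the standard pairing) the Fourier transform of the function $h_z := f\cdot \overline{\alpha_z(g)}$.

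First I would record uniform $L^1$-bounds on the derivatives of $h_z$. By the Leibniz rule, for any multi-index $\beta \in \mathbb N_0^{2d}$,
\begin{align*}
    \partial^\beta h_z = \sum_{\gamma \leq \beta} \binom{\beta}{\gamma} (\partial^\gamma f)\cdot \overline{\alpha_z(\partial^{\beta-\gamma} g)}.
\end{align*}
For $|\beta| \leq 2d+1$ every $\partial^\gamma f$ appearing is bounded by the $C_b^{2d+1}$-norm of $f$, and every $\partial^{\beta-\gamma} g$ lies in $\mathcal S(\mathbb R^{2d}) \subset L^1(\mathbb R^{2d})$; hence $\|\partial^\beta h_z\|_{L^1} \leq C_\beta\,\|f\|_{C_b^{2d+1}}$ with $C_\beta$ \emph{independent of $z$} (translation does not change the $L^1$-norm of $\partial^{\beta-\gamma}g$). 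Next, integrating by parts in $u$ against $e^{-i\sigma(w,u)}$ and using nondegeneracy of $\sigma$ (so that the $2d$ linear forms $u \mapsto \sigma(w,u)_{,j}$ span), one converts $(2d{+}1)$-st order powers of the components of $w$ into $(2d{+}1)$-st order derivatives of $h_z$; combined with the trivial bound $|V_g f(z,w)| \leq \|h_z\|_{L^1}$ this gives
\begin{align*}
    \sup_{z \in \mathbb R^{2d}} (1+|w|)^{2d+1}\, |V_g f(z,w)| \;\lesssim\; \|f\|_{C_b^{2d+1}}.
\end{align*}

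Finally I would integrate this estimate:
\begin{align*}
    \|V_g f\|_{L^{\infty,1}} = \int_{\mathbb R^{2d}} \sup_{z}|V_g f(z,w)|\, \mathrm d w \;\lesssim\; \|f\|_{C_b^{2d+1}} \int_{\mathbb R^{2d}} (1+|w|)^{-(2d+1)}\, \mathrm d w \;<\; \infty,
\end{align*}
the integral converging precisely because $2d+1 > 2d$. Thus $f \in M^{\infty,1}(\mathbb R^{2d})$ with $\|f\|_{M^{\infty,1},g} \lesssim \|f\|_{C_b^{2d+1}}$, which is the asserted continuous embedding. There is no real obstacle here: the argument is the standard "integrate by parts, the window is Schwartz" estimate, and the one point needing a little attention is that all derivative bounds are uniform in the translation parameter $z$ — which is automatic since the derivatives of $f$ are globally bounded — together with the cosmetic replacement of the Euclidean phase by the symplectic one. (One sees along the way that any smoothness exponent $> 2d$ would suffice.)
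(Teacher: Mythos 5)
Your argument is correct and is essentially the standard proof of the cited result \cite[Thm.\ 14.5.3]{Grochenig2001}: write $V_g f(z,\cdot)$ as the (symplectic) Fourier transform of $f\cdot\overline{\alpha_z(g)}$, gain $z$-uniform polynomial decay of order $2d+1$ in $w$ by Leibniz plus integration by parts, and integrate. This is exactly the scheme the paper only cites here and then mirrors on the operator side in the proof of Lemma~\ref{lem:OCM}, so there is nothing to add beyond the cosmetic difference that you use the weight $(1+|w|)^{-(2d+1)}$ where Gr\"ochenig uses $\min_{|\alpha|\le 2d+1}|w^\alpha|^{-1}$.
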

As Lemma \ref{lem:embMO} is the operator analogue of Lemma \ref{lem:embMF}, we now have to discuss the operator analogue of Lemma \ref{lem:embCF}, which is the following:
\begin{lemma}\label{lem:OCM}
    $C^{2d+1}(\mathcal H) \hookrightarrow M^{\infty, 1}(\mathcal H)$.
\end{lemma}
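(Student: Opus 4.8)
The plan is to transfer the function-space embedding of Lemma~\ref{lem:embCF} to the operator side using the isomorphism $\operatorname{op}^w \colon M^{\infty,1}(\mathbb R^{2d}) \to M^{\infty,1}(\mathcal H)$ together with the covariance $\alpha_z(\operatorname{op}^w(f)) = \operatorname{op}^w(\alpha_z(f))$. The key observation is that the notion of ``phase space derivative'' of an operator corresponds, under $\operatorname{op}^w$, to the ordinary partial derivative of the symbol. So first I would show: if $A \in C^k(\mathcal H)$ with Weyl symbol $a = \operatorname{op}^w{}^{-1}(A) \in \mathcal S'(\mathbb R^{2d})$, then $a \in C^k_b(\mathbb R^{2d})$ with $\partial^\alpha a = \operatorname{op}^w{}^{-1}(\partial^\alpha A)$ for $|\alpha| \le k$. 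The differentiability statement follows because $\operatorname{op}^w$ intertwines $\alpha_z$ on both sides and is a homeomorphism on tempered distributions/operators, so the difference quotients $t^{-1}(\alpha_{te_j}(a) - a)$ converge in $\mathcal S'$ to the distribution $\operatorname{op}^w{}^{-1}(\partial_j A)$; but the difference quotient of a tempered distribution always converges in $\mathcal S'$ to its distributional derivative, so $\partial_j a$ (distributional) equals this limit. To upgrade from ``distributional derivative is a tempered distribution'' to ``the symbol is $C^k_b$'', I would invoke Lemma~\ref{lem:embMO}: each $\partial^\alpha A \in C^k(\mathcal H) \subseteq \mathcal L(\mathcal H)$, and one needs that $\partial^\alpha a \in L^\infty(\mathbb R^{2d})$, i.e.\ bounded. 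This is where the argument needs care — boundedness of the operator $\partial^\alpha A$ does not by itself give boundedness of its Weyl symbol.

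The cleanest route around that obstacle is to avoid claiming $a \in C^{2d+1}_b$ directly and instead argue via convolutions, exactly as in the density corollary. Let $A \in C^{2d+1}(\mathcal H)$ and let $(f_n)$ be an approximate identity in $L^1(\mathbb R^{2d})$ consisting of Schwartz functions (rescaled Gaussians). Then $A_n := f_n \ast A \in C^\infty(\mathcal H)$, and moreover $A_n$ has Schwartz symbol: indeed $f_n \ast A = \operatorname{op}^w(f_n \ast a)$ where on the right $\ast$ denotes ordinary convolution on $\mathbb R^{2d}$, because $\alpha_z(\operatorname{op}^w(f)) = \operatorname{op}^w(\alpha_z f)$ shows $\int f_n(z)\alpha_z(\operatorname{op}^w(a))\,dz = \operatorname{op}^w(\int f_n(z)\alpha_z(a)\,dz)$. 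Now $\partial^\alpha A_n = (\partial^\alpha f_n)\ast A = f_n \ast (\partial^\alpha A)$ for $|\alpha| \le 2d+1$, and since $\partial^\alpha A \in \mathcal L(\mathcal H)$ with $f_n$ an $L^1$-approximate identity, we get $\|\partial^\alpha A_n\|_{op} \le \|f_n\|_{L^1}\,\|\partial^\alpha A\|_{op} = \|\partial^\alpha A\|_{op}$, uniformly in $n$. So $\|A_n\|_{C^{2d+1}}$ is bounded uniformly in $n$.

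Next I would combine the embeddings on the symbol side with the $\operatorname{op}^w$-isomorphism. Since $A_n = \operatorname{op}^w(a_n)$ with $a_n = f_n \ast a \in \mathcal S(\mathbb R^{2d}) \subseteq C^{2d+1}_b(\mathbb R^{2d})$, Lemma~\ref{lem:embCF} gives $\|a_n\|_{M^{\infty,1},g} \lesssim \|a_n\|_{C^{2d+1}_b} \lesssim \max_{|\alpha|\le 2d+1}\|\partial^\alpha a_n\|_\infty$. I need to bound the right side by $\|A_n\|_{C^{2d+1}}$, i.e.\ by the operator norms of $\partial^\alpha A_n$; equivalently I need $\|\partial^\alpha a_n\|_\infty \lesssim \|\operatorname{op}^w(\partial^\alpha a_n)\|_{op}$, a reverse bound. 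But $\partial^\alpha A_n = \operatorname{op}^w(\partial^\alpha a_n) \in C^0(\mathcal H) \cap \mathcal L(\mathcal H)$ is itself a convolution operator, hence lies in $C^0(\mathcal H)$, and for operators coming from continuous symbols one recovers the sup-norm of the symbol as a supremum over coherent-state or Weyl-symbol values controlled by the operator norm — concretely, $\partial^\alpha a_n$ is continuous and $|\partial^\alpha a_n(z)| = |\operatorname{tr}(\operatorname{op}^w(\partial^\alpha a_n)\,\Pi_z)|$-type expression bounded by $\|\partial^\alpha A_n\|_{op}$ up to a universal constant, where $\Pi_z$ is a normalized parity/point operator at $z$. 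Thus $\|a_n\|_{M^{\infty,1},g} \lesssim \|A_n\|_{C^{2d+1}} \le \|A\|_{C^{2d+1}}$, uniformly in $n$. Hence $\|A_n\|_{M^{\infty,1},B}$ is uniformly bounded, so by Banach--Alaoglu (weak-$*$ compactness of balls in $M^{\infty,1}$, which is a dual space) a subnet of $(A_n)$ converges weak-$*$ to some $A_\infty \in M^{\infty,1}(\mathcal H)$ with $\|A_\infty\|_{M^{\infty,1},B} \lesssim \|A\|_{C^{2d+1}}$; but $A_n \to A$ in operator norm, hence in $\mathcal S'(\mathcal H)$, so $A_\infty = A$. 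This proves $A \in M^{\infty,1}(\mathcal H)$ with $\|A\|_{M^{\infty,1},B} \lesssim \|A\|_{C^{2d+1}}$, which is the claimed continuous embedding.

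The main obstacle, as flagged above, is the reverse estimate $\|\partial^\alpha a_n\|_\infty \lesssim \|\partial^\alpha A_n\|_{op}$: one must know that the Weyl symbol of a bounded operator is sup-norm controlled by the operator norm \emph{once we already know the symbol is bounded and continuous}, which holds via the expression of the Weyl symbol through conjugation by parity operators, $\operatorname{op}^w(a)$ having symbol value $a(z) = 2^d\operatorname{tr}(\alpha_z(P)\operatorname{op}^w(a))$ for $P$ the parity operator, giving $\|a\|_\infty \le 2^d\|P\|_{op}\,\|\operatorname{op}^w(a)\|_{\mathcal T^1\to\cdots}$ — the care needed is that $P$ is not trace class, so this pairing must be justified in the tempered-operator sense and only yields a clean bound after the regularization $f_n \ast A$ makes everything trace class. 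An alternative that sidesteps parity operators: use that $C^0(\mathcal H)$ is, via correspondence theory, exactly the operators whose ``upper symbol'' lies in $C_b$, and quote the corresponding reverse Calderón--Vaillancourt-type bound from the QHA literature; either way, the uniform-in-$n$ control of $\|a_n\|_{C^{2d+1}_b}$ by $\|A\|_{C^{2d+1}}$ is the crux, and everything else is soft functional analysis.
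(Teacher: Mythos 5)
There is a genuine gap at the step you yourself flag as the crux: the reverse estimate $\|\partial^\alpha a_n\|_\infty \lesssim \|\partial^\alpha A_n\|_{op}$ is false in general, and your proposed justification does not repair it. The sup-norm of a Weyl symbol is \emph{not} controlled by the operator norm: e.g.\ with $\phi_\epsilon(z)=\epsilon^{-2d}\phi(z/\epsilon)$ a Gaussian approximate identity one has $\operatorname{op}^w(\phi_\epsilon)=\phi_\epsilon \ast \operatorname{op}^w(\delta_0)$, a multiple of a mollified parity operator, so $\|\operatorname{op}^w(\phi_\epsilon)\|_{op}\leq \|\phi_\epsilon\|_{L^1}\|\operatorname{op}^w(\delta_0)\|_{op}$ stays bounded while $\|\phi_\epsilon\|_\infty\to\infty$; these are Schwartz symbols, so smoothness of $a_n$ does not save the inequality (already for $\alpha=0$). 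The formula $a(z)=2^d\operatorname{tr}(\alpha_z(P)\operatorname{op}^w(a))$ cannot yield the bound either, precisely because $P$ is not trace class — and the regularization $f_n\ast A$ does not make $A_n$ trace class (it is still merely bounded), so the pairing is never reduced to a trace-norm-times-operator-norm estimate. Worse, a bound of the form ``$\|a\|_\infty$ controlled by operator norms of $A$ and finitely many of its phase-space derivatives'' is, via Lemma \ref{lem:embMF}, essentially the hard content of the lemma you are trying to prove, so invoking it (or a correspondence-theory ``reverse Calder\'on--Vaillancourt'' statement) makes the argument circular. A secondary slip: $a_n=f_n\ast a$ with $a\in\mathcal S'(\mathbb R^{2d})$ is smooth with polynomially bounded derivatives but neither Schwartz nor, a priori, in $C_b^{2d+1}(\mathbb R^{2d})$, so even the entry point into Lemma \ref{lem:embCF} is not available without the missing estimate.

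The paper avoids this obstruction by never passing to the symbol at all: it estimates $V_BA$ directly on the operator side, using $|V_BA(z,w)|=|\mathcal F_W(A\,\alpha_{z-\frac w2}(B^\ast))(w)|$, the identity $\mathcal F_W(\partial^\alpha D)(w)=i^{|\alpha|}w^\alpha\mathcal F_W(D)(w)$, the Riemann--Lebesgue bound $\|\mathcal F_W(D)\|_\infty\leq\|D\|_{\mathcal T^1}$, and the Leibniz rule; since the window $B$ is a Schwartz operator, every term $(\partial^{\alpha'}A)(\partial^{\alpha''}\alpha_{z-\frac w2}(B^\ast))$ is trace class with trace norm bounded by $\|\partial^{\alpha'}A\|_{op}\,\|\partial^{\alpha''}B^\ast\|_{\mathcal T^1}$, uniformly in $z,w$. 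This gives $|V_BA(z,w)|\lesssim \|A\|_{C^{2d+1}}\min_{|\alpha|\leq 2d+1}|w^\alpha|^{-1}$ and hence the embedding, with only operator norms of derivatives of $A$ ever appearing. If you want to keep a transference-style argument, you would first need an independent proof of a reverse Calder\'on--Vaillancourt estimate involving \emph{more} derivatives on the operator side (as the paper does later in Section \ref{sec:5} with Schatten norms), not the derivative-for-derivative bound you assume.
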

\begin{proof}
    We begin by noting that, for $A \in M^{\infty, 1}(\mathcal H)$ and $B \in \mathcal S(\mathcal H)$, we have, using that the Hilbert-Schmidt inner product is antilinear in the second entry,
\begin{align*}
    V_BA (z, w) = \langle A, \gamma_w \alpha_z(B)\rangle &= \theta_{z,w}\langle A, W_{z-\frac{w}{2}} B W_{-z-\frac{w}{2}}\rangle\\
    &= \theta_{z,w}\tr(A (W_{z-\frac{w}{2}} B W_{-z-\frac{w}{2}})^\ast)\\
    &= \theta_{z,w} \tr(A W_{z+\frac{w}{2}} B^\ast W_{-z + \frac{w}{2}}).
\end{align*}
Here, $\theta_{z,w}$ is some unimodular constant, depending on $z$ and $w$, as obtained from applications of the CCR equation \eqref{eq:ccr}. Further, we have
\begin{align*}
    \mathcal F_W(A \alpha_z(B^\ast))(w) &= \tr(A W_z B^\ast W_{-z} W_w)\\
    &= \theta_{z,w}'\tr(A W_z B^\ast W_{-z+w})\\
    &= \theta_{z,w}' \tr(A W_{z' + \frac{w}{2}} B^\ast W_{-z' + \frac{w}{2}}),
\end{align*}
where we substituted $z \mapsto z' + \frac{w}{2}$ and $\theta_{z,w}'$ is again some unimodular constant. This shows that
\begin{align*}
    |V_B A (z, w)| =  |\mathcal F_W(A \alpha_{z-\frac{w}{2}}(B^\ast))(w)|.
\end{align*}
Similarly to the classical case, one shows $\mathcal F_W(\partial^\alpha D)(w) = i^{|\alpha|}w^\alpha \mathcal F_W(D)(w)$ for an operator $D$ and a multi-index $\alpha \in \mathbb N_0^{2d}$. Using this, we obtain for $\alpha \in \mathbb N_0^{2d}$, $|\alpha| \leq 2d+1$: 
\begin{align*}
    |\mathcal F_W(\partial^\alpha (A \alpha_z(B^\ast)))(w)| &= |w^\alpha| |\mathcal F_W(A \alpha_z(B^\ast))(w)|
\end{align*}
Evaluating this at $z = z' - \frac{w}{2}$ yields:
\begin{align*}
    |\mathcal F_W(\partial^\alpha(A \alpha_{z -\frac{w}{2}}(B^\ast)))(w)| = |w^\alpha| |V_B(A)(z, w)|
\end{align*}
We can now conclude analogously to the proof of the analogous result for functions, cf.\ the proof of \cite[Thm.\ 14.5.3]{Grochenig2001}:
\begin{align*}
    |V_B A(z, w)| = \frac{1}{|w^\alpha|} |\mathcal F_W(\partial^\alpha (A\alpha_{z-\frac{w}{2}}(B^\ast))(w)|
\end{align*}
Let us estimate the Fourier transform. By the Riemann-Lebesgue Lemma for $\mathcal F_W$, we have:
\begin{align*}
    \| \mathcal F_W(\partial^\alpha (A \alpha_{z-\frac{w}{2}}(B^\ast)))\|_\infty \leq \| \partial^\alpha (A \alpha_{z-\frac{w}{2}}(B^\ast))\|_{\mathcal T^1}
\end{align*}
We can now expand $\partial^\alpha (A \alpha_{z - \frac{w}{2}}(B^\ast))$ into a finite linear combination of expressions of the form $(\partial^{\alpha'} A) (\partial^{\alpha''}\alpha_{z-\frac{w}{2}}(B^\ast))$ with $|\alpha'| + |\alpha''| \leq 2d+1$. Since $\partial^{\alpha'}(A) \in \mathcal L(\mathcal H)$ by assumption and $\partial^{\alpha''} \alpha_{z-\frac{w}{2}}(B^\ast) = \alpha_{z-\frac{w}{2}} \partial^{\alpha''}(B^\ast) \in \mathcal S(\mathcal H)$ (the exchange of the shift and differential uses the property $\alpha_v(\alpha_u(D)) = \alpha_u(\alpha_v(D))$ of the shifts, which is readily verified) and $\| \alpha_{z-\frac{w}{2}}(\partial^{\alpha''}(B^\ast))\|_{\mathcal T^1} = \| \partial^{\alpha''}(B^\ast)\|_{\mathcal T^1}$, we see that $\| \partial^\alpha (A \alpha_{z-\frac{w}{2}}(B^\ast))\|_{\mathcal T^1}$ is uniformly bounded in  $z$ and $w$. Hence, we have seen:
\begin{align*}
    \| A\|_{M^{\infty, 1}, B} \lesssim \int_{\mathbb R^{2d}} \min_{|\alpha| \leq 2d+1} \frac{1}{|w^\alpha|}~dw.
\end{align*}
Finiteness of this integral has been shown in the proof of \cite[Thm.\ 14.5.3]{Grochenig2001}, to which we simply refer.
\end{proof}
Summarizing what we have done so far, we obtained the following inclusions and mapping properties of the Weyl quantization (where the arrow $\overset{\sim}{\longrightarrow}$  indicates a topological isomorphism):
\begin{align*}
    \begin{matrix}
L^\infty(\mathbb R^{2d}) & & \mathcal L(\mathcal H)\\
\hookuparrow & & \hookuparrow\\
M^{\infty, 1}(\mathbb R^{2d}) & \overset{\sim}{\underset{\mathrm{op}^w}{\longrightarrow}} & M^{\infty, 1}(\mathcal H)\\
\hookuparrow & & \hookuparrow\\
C_b^{2d+1}(\mathbb R^{2d}) & & C^{2d+1}(\mathcal H)
    \end{matrix}
\end{align*}
It is not hard to verify that
\begin{align*}
    \{ f \in \mathcal S'(\mathbb R^{2n}):\ &(z \mapsto \alpha_z(f)) \in C^\infty(\mathbb R^{2n}; L^\infty(\mathbb R^{2n}))\}\\
    &= \{ f \in \mathcal S'(\mathbb R^{2n}):\ (z \mapsto \alpha_z(f)) \in C^\infty(\mathbb R^{2n}; C_b^{2d+1}(\mathbb R^{2d}))\}\\
    &= C_b^\infty(\mathbb R^{2n}).
\end{align*}
Using the inclusions on the function side, we therefore obtain:
\begin{align*}
    \{ f \in \mathcal S'(\mathbb R^{2n}):\ (z \mapsto \alpha_z(f)) \in C^\infty(\mathbb R^{2n}; M^{\infty, 1}(\mathbb R^{2n}))\} = C_b^\infty(\mathbb R^{2n}).
\end{align*}
Similarly, one can easily see that
\begin{align*}
    \{ A \in \mathcal S'(\mathcal H):\ &(z \mapsto \alpha_z(A))\in C^\infty (\mathbb R^{2n}; \mathcal L(\mathcal H))\}\\
    &= \{ A \in \mathcal S'(\mathcal H):\ (z \mapsto \alpha_z(A)) \in C^\infty(\mathbb R^{2n}; C^{2d+1}(\mathcal H))\}\\
    &= C^\infty(\mathcal H).
\end{align*}
From the inclusions on the operator side, we therefore obtain
\begin{align*}
        \{ A \in \mathcal S'(\mathcal H):\ (z \mapsto \alpha_z(A)) \in C^\infty(\mathbb R^{2n}; M^{\infty, 1}(\mathcal H))\} = C^\infty(\mathcal H).
\end{align*}
Using that $\mathrm{op}^w$ is a topological isomorphism from $M^{\infty, 1}(\mathbb R^{2d})$ to $M^{\infty, 1}(\mathcal H)$, which also satisfies $\alpha_z(\mathrm{op}^w(f)) =  \mathrm{op}^w(\alpha_{z}(f))$, we see that $\partial^\alpha \mathrm{op}^w(f) = (-1)^{|\alpha|}\mathrm{op}^w(\partial^\alpha f)$ (the factor $(-1)^{\alpha}$ comes from the fact that the partial derivative of $f$ with respect to $\alpha$ is the limit of $(f(\cdot -te_j) - f)/t$, which differs from the standard convention of derivatives by a factor of $-1$). This shows that a phase space-smooth element of $M^{\infty, 1}(\mathbb R^{2d})$, i.e.\ a symbol in $C_b^\infty(\mathbb R^{2d})$, yields a phase space smooth Weyl pseudodifferential operator. The inverse map of $\mathrm{op}^w$ has the same good properties, hence we obtain that a phase space smooth operator is given by a Weyl symbol in $C_b^\infty(\mathbb R^{2d})$.
 
 This finishes the proof of Cordes' theorem for Weyl pseudodifferential operators:
\begin{theorem}\label{thm:CordesWeyl}
    $C^\infty(\mathcal H) = \mathrm{op}^w(C_b^\infty(\mathbb R^{2n}))$.
\end{theorem}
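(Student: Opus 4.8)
The statement is the endpoint of the chain of results assembled above, so the plan is simply to combine them; the organizing principle is that the Sjöstrand space $M^{\infty,1}$ is sandwiched, both on the symbol side and on the operator side, between the Calderón–Vaillancourt level ($L^\infty(\mathbb R^{2d})$, respectively $\mathcal L(\mathcal H)$) and the finite-smoothness level ($C_b^{2d+1}(\mathbb R^{2d})$, respectively $C^{2d+1}(\mathcal H)$), and that $\mathrm{op}^w$ carries the first sandwich onto the second, compatibly with the phase-space shift.

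First I would make the two smoothness classes appear as spaces of $C^\infty$-vectors for the shift action. In each of the two columns $L^\infty(\mathbb R^{2d}) \hookleftarrow M^{\infty,1}(\mathbb R^{2d}) \hookleftarrow C_b^{2d+1}(\mathbb R^{2d})$ and $\mathcal L(\mathcal H) \hookleftarrow M^{\infty,1}(\mathcal H) \hookleftarrow C^{2d+1}(\mathcal H)$ the map $z \mapsto \alpha_z$ acts; since the inclusions are continuous, an element whose orbit map is $C^\infty$ into the bottom space automatically has $C^\infty$ orbit map into the middle and top spaces, and — by the elementary identifications established above, namely that $C_b^\infty(\mathbb R^{2d})$ is exactly the set of $f \in \mathcal S'$ with $C^\infty$ orbit map into $L^\infty$, equivalently into $C_b^{2d+1}$, and likewise $C^\infty(\mathcal H)$ is exactly the set of $A$ with $C^\infty$ orbit map into $\mathcal L(\mathcal H)$, equivalently into $C^{2d+1}(\mathcal H)$ — one concludes
\[
C_b^\infty(\mathbb R^{2d}) = \{\, f \in \mathcal S'(\mathbb R^{2d}) : z \mapsto \alpha_z(f)\ \text{is}\ C^\infty\ \text{into}\ M^{\infty,1}(\mathbb R^{2d})\,\},
\]
\[
C^\infty(\mathcal H) = \{\, A \in \mathcal S'(\mathcal H) : z \mapsto \alpha_z(A)\ \text{is}\ C^\infty\ \text{into}\ M^{\infty,1}(\mathcal H)\,\}.
\]
(Replacing ``$\partial^\alpha A$ exists in operator norm for all $\alpha$'' by ``the bounded orbit map is jointly $C^\infty$'' is the standard fact that iterated norm-differentiability of a bounded orbit of a strongly continuous action forces smoothness of the orbit map.)

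Then I would transport along $\mathrm{op}^w$. By Lemma~\ref{lem:WeylQuant} together with the surrounding discussion, $f \mapsto \mathrm{op}^w(f)$ is a topological isomorphism $M^{\infty,1}(\mathbb R^{2d}) \overset{\sim}{\longrightarrow} M^{\infty,1}(\mathcal H)$, and by the covariance corollary following Lemma~\ref{lem:WeylQuant} it intertwines the shifts, $\alpha_z(\mathrm{op}^w f) = \mathrm{op}^w(\alpha_z f)$. Hence $z \mapsto \alpha_z(\mathrm{op}^w f)$ is $C^\infty$ into $M^{\infty,1}(\mathcal H)$ if and only if $z \mapsto \alpha_z(f)$ is $C^\infty$ into $M^{\infty,1}(\mathbb R^{2d})$, with $\partial^\alpha \mathrm{op}^w(f) = (-1)^{|\alpha|}\mathrm{op}^w(\partial^\alpha f)$. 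Substituting into the two displayed identities gives $\mathrm{op}^w\big(C_b^\infty(\mathbb R^{2d})\big) = C^\infty(\mathcal H)$, which is the theorem.

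The one step with genuine content — and the one I would expect to be the obstacle — is Lemma~\ref{lem:OCM}, the operator embedding $C^{2d+1}(\mathcal H) \hookrightarrow M^{\infty,1}(\mathcal H)$; everything else is soft functional analysis once the classical embeddings (Lemmas~\ref{lem:embMF}, \ref{lem:embMO}, \ref{lem:embCF}) and Lemma~\ref{lem:WeylQuant} are granted. To prove it one rewrites the operator STFT $V_B A(z,w)$ as a unimodular multiple of $\mathcal F_W\!\big(A\,\alpha_{z-w/2}(B^\ast)\big)(w)$, uses the identity $\mathcal F_W(\partial^\alpha D)(w) = i^{|\alpha|} w^\alpha \mathcal F_W(D)(w)$ and the trace-norm Riemann–Lebesgue bound $\|\mathcal F_W(D)\|_\infty \le \|D\|_{\mathcal T^1}$, expands $\partial^\alpha\big(A\,\alpha_\bullet(B^\ast)\big)$ by the operator Leibniz rule into terms $(\partial^{\alpha'}A)(\partial^{\alpha''}\alpha_\bullet(B^\ast))$ whose second factor stays in $\mathcal S(\mathcal H)$ with trace norm independent of $z,w$, and thereby dominates $|V_B A(z,w)|$ by $\min_{|\alpha|\le 2d+1}|w^\alpha|^{-1}$, whose integrability in $w$ is the same elementary computation as in the scalar proof underlying Lemma~\ref{lem:embCF}.
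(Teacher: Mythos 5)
Your proposal is correct and follows essentially the same route as the paper: the same sandwich of embeddings $C_b^{2d+1}\hookrightarrow M^{\infty,1}\hookrightarrow L^\infty$ (and its operator analogue), the identification of $C_b^\infty(\mathbb R^{2d})$ and $C^\infty(\mathcal H)$ as the elements with $C^\infty$ orbit map into the respective $M^{\infty,1}$ space, and transport along the shift-covariant isomorphism $\mathrm{op}^w\colon M^{\infty,1}(\mathbb R^{2d})\to M^{\infty,1}(\mathcal H)$. Your sketch of the key embedding $C^{2d+1}(\mathcal H)\hookrightarrow M^{\infty,1}(\mathcal H)$ also matches the paper's proof of Lemma~\ref{lem:OCM}.
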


\section{Passing to $\tau$-quantization}\label{sec:2}

We want to note that Cordes proved the result for the Kohn-Nirenberg quantization instead of the Weyl quantization. In terms of quantum harmonic analysis, it is more convenient to work with Weyl pseudodifferential operators. Nevertheless, it is not hard to translate the result we obtained into a result for any other $\tau$-quantization ($\tau \in \mathbb R$). Before describing how this works, we want to recall that $\tau$-quantization is defined by
\begin{align*}
    \mathrm{op}^\tau(f)(\varphi)(t) &= \int_{\mathbb R^d} \int_{\mathbb R^d} f(x\tau + (1-\tau)t, \xi) e^{i\xi\cdot(t-x)} \varphi(x)~dx~d\xi.
\end{align*}
for $f \in \mathcal S(\mathbb R^{2d})$ and $\varphi \in \mathcal S(\mathbb R^d)$. Clearly, $\tau = 0$ yields Kohn-Nirenberg (or left) quantization and $\tau = \frac{1}{2}$ gives Weyl quantization. From a pseudodifferential point of view, the most natural consideration to translate Theorem \ref{thm:CordesWeyl} into a result for $\tau$-quantization is considering the map sending Kohn-Nirenberg symbols to $\tau$-symbols. This map $N_\tau$ is given by
\begin{align*}
    N_\tau f(x, \xi) = \int_{\mathbb R^d} \int_{\mathbb R^d} e^{-iy\cdot \eta} f(x+\tau y, \xi + \eta) ~dy~d\eta 
\end{align*}
for $f \in \mathcal S(\mathbb R^{2d})$ and then extended by continuity to $\mathcal S'(\mathbb R^{2d})$. Then, it holds
\begin{align*}
    \mathrm{op}^0(N_{-\frac{1}{2}}(f)) = \mathrm{op}^w(f), \quad \mathrm{op}^\tau(N_\tau(f)) = \mathrm{op}^0(f).
\end{align*}
Since $N_\tau$ maps $C_b^\infty(\mathbb R^{2d})$ isomorphically to itself, we obtain:
\begin{corollary}\label{cor:tau_quant}
    Let $\tau \in \mathbb R$. Then, it is $C^\infty(\mathcal H) = \mathrm{op}^\tau(C_b^\infty(\mathbb R^{2d}))$.
\end{corollary}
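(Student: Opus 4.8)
The plan is to transport Theorem~\ref{thm:CordesWeyl} from Weyl quantization to $\tau$-quantization by means of the change-of-quantization operators $N_\tau$, using Kohn--Nirenberg quantization ($\tau = 0$) as an intermediate waypoint. The point is that the entire analytic content already lives in Section~\ref{sec:1}; what remains is a purely algebraic manipulation of sets of operators, once one knows that $N_\tau$ (and $N_{-1/2}$) restricts to a linear bijection of $C_b^\infty(\mathbb R^{2d})$ onto itself.

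First I would record the two intertwining identities already displayed above, namely $\mathrm{op}^0(N_{-1/2}(f)) = \mathrm{op}^w(f)$ and $\mathrm{op}^\tau(N_\tau(f)) = \mathrm{op}^0(f)$ for $f \in \mathcal S'(\mathbb R^{2d})$. By Theorem~\ref{thm:CordesWeyl} we have $C^\infty(\mathcal H) = \mathrm{op}^w(C_b^\infty(\mathbb R^{2d}))$; applying the first identity pointwise and using surjectivity of $N_{-1/2}$ on $C_b^\infty(\mathbb R^{2d})$ gives
\[ C^\infty(\mathcal H) = \mathrm{op}^0\big(N_{-1/2}(C_b^\infty(\mathbb R^{2d}))\big) = \mathrm{op}^0\big(C_b^\infty(\mathbb R^{2d})\big), \]
which is exactly Cordes' original statement in Kohn--Nirenberg form. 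Applying the second identity and surjectivity of $N_\tau$ on $C_b^\infty(\mathbb R^{2d})$ then yields $\mathrm{op}^0(C_b^\infty(\mathbb R^{2d})) = \mathrm{op}^\tau(C_b^\infty(\mathbb R^{2d}))$, and chaining the two equalities proves the corollary. No covariance or smoothness bookkeeping has to be repeated: once the two collections of operators are literally equal as subsets of $\mathcal L(\mathcal H)$, the identification $C^\infty(\mathcal H) = \mathrm{op}^\tau(C_b^\infty(\mathbb R^{2d}))$ (and, in particular, $\mathrm{op}^\tau(C_b^\infty(\mathbb R^{2d})) \subseteq \mathcal L(\mathcal H)$) is automatic.

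The only step that is not purely formal is the claim that $N_\tau$ maps $C_b^\infty(\mathbb R^{2d})$ bijectively onto itself, with inverse $N_{-\tau}$. The cleanest justification is to observe that, up to normalization, $N_\tau$ acts on the Fourier-transform side as multiplication by the unimodular smooth symbol $e^{i\tau\,\zeta_1\cdot\zeta_2}$, so it is a translation-invariant operator satisfying $N_\tau N_{-\tau} = \mathrm{id}$, and the invariance of the symbol class $C_b^\infty(\mathbb R^{2d}) = S^0_{0,0}(\mathbb R^{2d})$ under these change-of-quantization maps is classical. Since Theorem~\ref{thm:CordesWeyl} carries all the weight, I do not expect any genuine obstacle here; the ``hard part'' was already dispatched in Section~\ref{sec:1}, and the present corollary amounts to pushing that result through an isomorphism that is transparent on the Fourier side.
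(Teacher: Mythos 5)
Your proposal is sound as a logical skeleton, but it is not the paper's proof: it is exactly the ``pseudodifferential'' route that the authors sketch at the beginning of Section~\ref{sec:2} and then deliberately set aside. The entire weight of your argument rests on the claim that $N_\tau$ maps $C_b^\infty(\mathbb R^{2d})$ bijectively onto itself; the paper's stated reason for not arguing this way is precisely that, while this fact is well known, a precise statement is hard to locate in the literature and the standard direct proofs (for the class $S^0_{0,0}$, where no asymptotic expansion in the usual sense is available) are of comparable depth to Calder\'on--Vaillancourt/Beals-type results --- so declaring it ``classical'' is exactly where the real work hides, and your proof is only as self-contained as that citation. The paper instead reruns the Section~\ref{sec:1} argument for the $\tau$-Weyl system $W^\tau_{(x,\xi)}$: using $\mathcal F_W^\tau = M^\tau \mathcal F_W$ and $(\mathcal F_W^\tau)^{-1}\circ \mathcal F_\sigma = \mathrm{op}^\tau$, it verifies that $\mathrm{op}^\tau$ is an isometric isomorphism of $M^{\infty,1}(\mathbb R^{2d})$ onto $M^{\infty,1}(\mathcal H)$ intertwining the phase-space shifts, and then the same three-level diagram of embeddings yields $C^\infty(\mathcal H) = \mathrm{op}^\tau(C_b^\infty(\mathbb R^{2d}))$ directly; the statement that $N_\tau$ is a topological isomorphism of $C_b^\infty(\mathbb R^{2d})$ then drops out as a consequence rather than serving as an input. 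What your route buys is brevity, granted the external input (and your set-theoretic manipulations with $\mathrm{op}^0(N_{-1/2}f)=\mathrm{op}^w(f)$ and $\mathrm{op}^\tau(N_\tau f)=\mathrm{op}^0(f)$ are fine once bijectivity of $N_\tau$ on $C_b^\infty$ is granted); what the paper's route buys is self-containedness, the $N_\tau$-invariance for free, and a template that carries over to $\Phi$-quantizations on $\mathbb R^{d_1}\times\mathbb T^{d_2}\times\mathbb Z^{d_3}$ in Section~\ref{sec:3}, where no off-the-shelf analogue of the $N_\tau$-invariance is available. If you keep your route, you should either prove the $C_b^\infty$-invariance of $N_\tau$ or give a concrete reference for it, rather than an appeal to folklore.
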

While it is certainly well-known that $N_\tau$ is a topological isomorphism of $C_b^\infty(\mathbb R^{2d})$, this precise statement is somewhat hard to find in the literature and somewhat annoying to prove by the standard methods. We will obtain a proof of this fact by other means, i.e., by adapting of Cordes' theorem for Weyl quantization, which was presented in the previous section, to general $\tau$-quantization. As a by-product, we will also obtain a proof of Corollary \ref{cor:tau_quant}.

This second approach is more in the spirit of quantum harmonic analysis and goes as follows: Instead of the Weyl operators used before, we consider now (again, for $\tau \in \mathbb R$ fixed):
\begin{align*}
    W_{(x, \xi)}^\tau f(t) = e^{i\xi \cdot t - i \tau x\cdot \xi}f(t-x).
\end{align*}
The Weyl operators (cf.\ Eq.~\eqref{eq:weyl_ops}) are recovered by letting $\tau=\frac12$. We will still keep writing $W_z$ for the operators with $\tau = \frac 12$.
Then, the family $(W_{(x, \xi)}^\tau)_{x, \xi \in \mathbb R^{d}}$ is again a strongly continuous projective unitary representation of $\mathbb R^{2d}$ and falls completely within the general framework of QHA on locally compact abelian groups \cite{Fulsche_Galke2023}, see also \cite{Luef_Bastianoni2023}. The multiplier  $m_\tau$ related to the representation, i.e., the function satisfying
\begin{align*}
    W_{(x, \xi)}^\tau W_{(y, \eta)}^\tau = m_\tau((x, \xi), (y, \eta)) W_{(x+y, \xi + \eta)}^\tau
\end{align*}
is given by $m_\tau((x, \xi), (y, \eta)) = e^{-i((1-\tau)x \cdot \eta - \tau y\cdot \xi)}$. Note that the quotient
\begin{align*} 
\frac{m_\tau((x, \xi), (y, \eta))}{m_\tau((y, \eta), (x, \xi))} = e^{i\sigma((x, \xi), (y, \eta))}
\end{align*}
is independent of $\tau$. Further, one readily verifies that
\begin{align*}
    (W_{(x, \xi)}^\tau)^\ast = W_{(-x, -\xi)}^{1-\tau}.
\end{align*}
Now, if we define the shift of an operator $A \in \mathcal L(\mathcal H)$ by $\alpha_{(x, \xi)}^\tau(A) = W_{(x, \xi)}^\tau A (W_{(w, \xi)}^\tau)^\ast$, one immediately sees that this is independent of $\tau$:
\begin{align*}
    W_{(x, \xi)}^\tau A (W_{(w, \xi)}^\tau)^\ast &= W_{(x, \xi)}^\tau A W_{(-x, -\xi)}^{1-\tau} \\
    &= e^{i(\frac 12 - \tau)x\cdot \xi} W_{(x, \xi)}^{1/2} A W_{(-x, -\xi)}^{1/2} e^{i(\frac 12 - 1 + \tau)(-x)\cdot (-\xi)} \\
    &= \alpha_{(x,\xi)}(A).
\end{align*}
Hence, we will simply keep writing $\alpha_{(x, \xi)}(A)$ instead of $\alpha_{(x, \xi)}^\tau(A)$. Next, we consider the Fourier-Weyl transform $\mathcal F_{W}^\tau$ and its inverse $(\mathcal F_W^{\tau})^{-1}$, given by
\begin{align*}
    \mathcal F_W^\tau(A)(x, \xi) &:= \tr(A(W_\xi^\tau)^\ast) = e^{i(\tau - \frac 12)x \cdot \xi} \mathcal F_W(A)(x, \xi),\\
    (\mathcal F_W^{\tau})^{-1}(f) &:= \int_{\mathbb R^{d}} \int_{\mathbb R^d} f(x, \xi) W_{(x, \xi)}^{\tau}~dx ~d\xi = \int_{\mathbb R^d} \int_{\mathbb R^d} e^{i(\frac 12 - \tau) x \cdot \xi}f(x, \xi) W_{(x, \xi)} ~dx~d\xi.
\end{align*}
These formula\lvl{e} can be summarized as $\mathcal F_W^\tau = M^\tau \mathcal F_W^{}$ and $(\mathcal F_W^{\tau})^{-1} = (\mathcal F_W)^{-1} (M^\tau)^\ast$, where $M^\tau$ is the multiplication operator $f(x, \xi) \mapsto e^{i(\tau - \frac{1}{2})x \cdot \xi} f(x, \xi)$. Since $M^\tau$ is a topological, respectively isometrical isomorphism of $\mathcal S(\mathbb R^{2d})$, $\mathcal S'(\mathbb R^{2d})$ and $L^2(\mathbb R^{2d})$, $\mathcal F_W^\tau$ has the same good mapping properties as $\mathcal F_W$, i.e.\ it maps $\mathcal S(\mathcal H)$ to $\mathcal S(\mathbb R^{2d})$, $\mathcal T^2(\mathcal H)$ to $L^2(\mathbb R^{2d})$ and $\mathcal S'(\mathcal H)$ to $\mathcal S'(\mathbb R^{2d})$. Exactly as in Lemma \ref{lem:WeylQuant}, one proves that:
\begin{align*}
    (\mathcal F_W^\tau)^{-1} \circ \mathcal F_\sigma = \mathrm{op}^\tau.
\end{align*}
It is not hard to see that $\mathcal F_W^\tau(\alpha_z(A)) = \gamma_z (\mathcal F_W^\tau(A))$ holds true for every value of $\tau$. When it comes to the modulation of operators, the situation unfortunately becomes less nice for $\tau \neq \frac{1}{2}$. We consider $\gamma_z^\tau(A) = W_{\frac{z}{2}}^{\tau} A W_{\frac{z}{2}}^\tau$. We now have, using the identity $W_{(x, \xi)}^\tau = e^{i(\frac 12 - \tau) x\cdot \xi} W_{(x ,\xi)}$:
\begin{align*}
    \mathcal F_W^\tau(\gamma_{(x, \xi)}^\tau(A))(y, \eta) &= e^{i(\frac{1}{2}-\tau )(x\cdot \xi/2 - y\cdot \eta)}\mathcal F_W(\gamma_{(x, \xi)}(A))(y, \eta)\\
    &=e^{i(\frac{1}{2} - \tau)(x\cdot \xi/2 - y\cdot \eta)} \alpha_{(x, \xi)}(\mathcal F_W(A))(y, \eta)\\
    &= e^{\frac{3i}{2}(\frac 12 -\tau) x \cdot \xi}e^{i\sigma(((\frac 12 -\tau)x, (\tau - \frac 12)\xi), (y, \eta))} \alpha_{(x, \xi)}(\mathcal F_W^{\tau}(A))(y, \eta)\\
    &= e^{\frac{3i}{2}(\frac 12 -\tau) x \cdot \xi} \gamma_{((\frac{1}{2}-\tau)x, (\tau - \frac{1}{2})\xi)} \alpha_{(x, \xi)} \mathcal F_W^\tau(A)(y, \eta)
\end{align*}
Now, with the $\tau$-dependent version of the Plancherel theorem (which is readily established by using $\mathcal F_W^\tau = M^\tau \mathcal F_W$), we obtain for $A \in \mathcal S'(\mathcal H)$ and $B \in \mathcal S(\mathcal H)$ (where $w = (y, \eta)$ and $z = (x, \xi)$):
\begin{align*}
    |V_B A(z, w)| &= |\langle A, \gamma_w \alpha_z(B)\rangle|\\
    &= |\langle \mathcal F_W^\tau(A),  \gamma_{((\frac{1}{2}-\tau)y, (\tau - \frac{1}{2})\eta)}\alpha_{(y, \eta)} \gamma_{(x, \xi)} \mathcal F_W^\tau(B)\rangle|\\
    &= |\langle \mathcal F_\sigma \mathcal F_W^\tau(A), \alpha_{((\frac{1}{2}-\tau)y, (\tau - \frac{1}{2})\eta)}\gamma_{(y, \eta)} \alpha_{(x, \xi)} \mathcal F_\sigma \mathcal F_W^\tau(B)\rangle|\\
    &= | \langle \mathcal F_\sigma \mathcal F_W^\tau (A), \gamma_{(y, \eta)} \alpha_{(x + (\tau - \frac{1}{2})y, \xi - (\tau - \frac{1}{2})\eta)} \mathcal F_\sigma \mathcal F_W^\tau(B)\rangle|.
\end{align*}
These identities show that $A = \mathrm{op}^\tau(f) \in M^{\infty, 1}(\mathcal H)$ if and only if $f \in M^{\infty, 1}(\mathbb R^{2d})$, and $f \mapsto \mathrm{op}^\tau(f)$ is (upon choosing an appropriate norm) an isometric isomorphism from $M^{\infty, 1}(\mathbb R^{2d})$ to $M^{\infty, 1}(\mathcal H)$. In particular, we now have the isometric isomorphisms
\begin{align*}
    M^{\infty, 1}(\mathcal H) \overset{(\mathrm{op}^\tau)^{-1}}{\longrightarrow} M^{\infty, 1}(\Xi) \overset{\mathrm{op}^w}{\longrightarrow} M^{\infty, 1}(\mathcal H).
\end{align*}
Therefore, the same reasoning as presented at the end of Section \ref{sec:2} yields topological isomorphisms:
\begin{align*}
    C^\infty(\mathcal H) \overset{(\mathrm{op}^{\tau})^{-1}}{\longrightarrow} C_b^\infty(\Xi) \overset{\mathrm{op}^w}{\longrightarrow} C^\infty(\mathcal H)
\end{align*}
This proves Corollary \ref{cor:tau_quant}. As an immediate consequence, we get:
\begin{corollary}
$N_\tau$ is a topological isomorphism of $C_b^\infty(\mathbb R^{2d})$.
\end{corollary}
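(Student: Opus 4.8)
The plan is to read off this corollary from the two displayed isomorphisms established just above. We have already shown that $\mathrm{op}^w \colon C_b^\infty(\Xi) \to C^\infty(\mathcal H)$ is a topological isomorphism (this is Cordes' theorem in Weyl quantization, Theorem~\ref{thm:CordesWeyl}, upgraded to an isomorphism of Fr\'echet spaces by tracking the norms through the STFT picture), and likewise that $\mathrm{op}^\tau \colon C_b^\infty(\Xi) \to C^\infty(\mathcal H)$ is a topological isomorphism (Corollary~\ref{cor:tau_quant} together with the intertwining of the $\alpha_z$-action, exactly as in the Weyl case). Since $\mathrm{op}^0 \circ N_{-1/2} = \mathrm{op}^w$ and, more relevantly, since $\mathrm{op}^\tau \circ N_\tau = \mathrm{op}^0$, these quantization identities let us express $N_\tau$ purely in terms of the quantization maps.

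First I would recall the relevant composition law. From $\mathrm{op}^\tau(N_\tau f) = \mathrm{op}^0(f)$ we get $N_\tau = (\mathrm{op}^\tau)^{-1} \circ \mathrm{op}^0$ as maps on, say, $\mathcal S'(\Xi)$ (both sides agree on the dense subspace $\mathcal S(\Xi)$ and are continuous). Now restrict to $C_b^\infty(\Xi)$: by Corollary~\ref{cor:tau_quant} applied with parameter $0$, $\mathrm{op}^0$ restricts to a topological isomorphism $C_b^\infty(\Xi) \to C^\infty(\mathcal H)$, and by Corollary~\ref{cor:tau_quant} applied with parameter $\tau$, $(\mathrm{op}^\tau)^{-1}$ restricts to a topological isomorphism $C^\infty(\mathcal H) \to C_b^\infty(\Xi)$. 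Hence $N_\tau = (\mathrm{op}^\tau)^{-1} \circ \mathrm{op}^0$ is a composition of two topological isomorphisms of Fr\'echet spaces and is therefore itself a topological isomorphism of $C_b^\infty(\Xi)$. In particular $N_\tau$ maps $C_b^\infty(\Xi)$ bijectively onto itself, bicontinuously.

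The only subtlety worth spelling out is the identification $N_\tau = (\mathrm{op}^\tau)^{-1}\circ \mathrm{op}^0$ at the level of $C_b^\infty$: one must check that the a priori identity on $\mathcal S'(\Xi)$ does restrict correctly, i.e.\ that for $f \in C_b^\infty(\Xi)$ the operator $\mathrm{op}^0(f)$ indeed lies in $C^\infty(\mathcal H)$ (which is the content of Corollary~\ref{cor:tau_quant} for $\tau = 0$) so that $(\mathrm{op}^\tau)^{-1}$ can be applied to it within the Fr\'echet category; this is immediate from what has been proved. No genuinely new estimate is needed. I do not anticipate a real obstacle here; the work has all been done in establishing Corollary~\ref{cor:tau_quant}, and this statement is a formal consequence. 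If one wanted to avoid even invoking $\mathrm{op}^0$ separately, one could instead write $N_\tau$ via the Weyl map using $\mathrm{op}^0 = \mathrm{op}^w \circ N_{-1/2}^{-1}$ (equivalently $\mathrm{op}^w = \mathrm{op}^0 \circ N_{-1/2}$), giving $N_\tau = (\mathrm{op}^\tau)^{-1} \circ \mathrm{op}^w \circ N_{-1/2}^{-1}$; but since $N_{-1/2}$ is just the special case $\tau = -\tfrac12$ of the statement being proved, it is cleanest to argue directly with $\mathrm{op}^0$ as above.
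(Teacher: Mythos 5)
Your proof is correct and is essentially the paper's own argument: the paper also obtains this corollary as an immediate consequence of the fact that each $\mathrm{op}^\tau$ (in particular $\tau$ and $0$) restricts to a topological isomorphism $C_b^\infty(\mathbb R^{2d})\to C^\infty(\mathcal H)$, so that $N_\tau=(\mathrm{op}^\tau)^{-1}\circ\mathrm{op}^0$ is a composition of topological isomorphisms.
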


\section{Heisenberg-smooth operators on abelian Lie groups}\label{sec:3}

The above reasoning for general $\tau$ is not restricted to the phase space $\mathbb R^{2d}$, but can be naturally extended to phase spaces $\Xi = G \times \widehat{G}$, where $G$ is a commutative Lie group. As is well-known, such groups are of the form $G \cong \mathbb R^{d_1} \times \mathbb T^{d_2} \times D$, where $d_1, d_2 \in \mathbb N_0$ and $D$ is a discrete abelian group. Since $D$ only contributes to the smooth structure of the phase space when its Pontryagin dual $\widehat{D}$ is a Lie group of dimension $> 0$, there is no loss of generality in assuming that $D = \mathbb Z^{d_3}$ with $d_3 \in \mathbb N_0$. The dual group $\widehat{G}$ is then $\widehat{G} \cong \mathbb R^{d_1} \times \mathbb Z^{d_2} \times \mathbb T^{d_3}$. During this section, we will keep writing the group additively, while at least on $\mathbb T$, we, of course, have a multiplication. Indeed, the tools and methods of quantum harmonic analysis work on general locally compact abelian groups -- see, e.g., \cite{Fulsche_Galke2023}. We will only briefly sketch the ideas involved to show how the method presented on $\Xi = \mathbb R^{2d}$ carries essentially over to the case at hand. 

Of course, we will always assume that $d_1 + d_2 + d_3 > 0$. The phase space will, depending on what is needed, either be written as $\Xi = G \times \widehat{G}$ or $\Xi = \mathbb R^{2d_1} \times \mathbb T^{d_2 + d_3} \times \mathbb Z^{d_2 + d_3}$. 

Indeed, one can even be more general than $\tau$-quantization. More precisely, we fix any continuous group homomorphism $\Phi$ of the position space $G$. Then, we can consider the operators $W_{(x, \xi)}^\Phi$, $(x, \xi) \in \Xi = G \times \widehat{G}$, acting on $\mathcal H = L^2(G)$ by
\begin{align*}
    W_{(x, \xi)}^\Phi f(t) = \overline{\langle \xi, \Phi(x) \rangle} \langle  \xi, t\rangle f(t-x),\quad t \in G.
\end{align*}
Now, the phase space $\Xi$ acts on $\mathcal L(\mathcal H)$ by $\alpha_z(A) = W_z^\Phi A (W_z^\Phi)^\ast$, which is again independent of the particular choice of $\Phi$. Further, we define the Fourier-Weyl transform $\mathcal F_W^\Phi(A)$ by $\mathcal F_W^\Phi(A)(w) = \tr(A(W_w^\Phi)^\ast)$ whenever $A \in \mathcal T^1(\mathcal H)$.

The Schwartz space on $G$, $\mathcal S(G)$, is now given as the tensor product $\mathcal S(G) = \mathcal S(\mathbb R^{d_1}) \otimes \mathcal S(\mathbb T^{d_2}) \otimes \mathcal S(\mathbb Z^{d_3})$, where $\mathcal S(\mathbb R^{d_1})$ is defined as usual, $\mathcal S(\mathbb T^{d_2}) = C^\infty(\mathbb T^{d_2})$ and $\mathcal S(\mathbb Z^{d_3}) = \mathfrak s(\mathbb Z^{d_3})$, the rapidly decreasing sequences (i.e., sequences decreasing faster than every polynomial). Similarly, $\mathcal S(\Xi)$ is defined as a tensor product. The dual spaces of tempered distributions are then $\mathcal S'(G)$ and $\mathcal S'(\Xi)$, respectively. An important fact that we will implicitly make use of is the following:
\begin{lemma}\label{lemma:phi1}
    Let $\Phi$ be a continuous group homomorphism of $G$. Then, the function
    \begin{align*}
        H_\Phi(x, \xi) = \overline{\langle \xi, \Phi(x)\rangle}
    \end{align*}
    is a multiplier of $\mathcal S(\Xi)$, $L^2(\Xi)$ and $\mathcal S'(\Xi)$.
\end{lemma}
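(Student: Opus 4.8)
The plan is to prove this by reducing to the case where $G$ is a single factor among $\mathbb{R}$, $\mathbb{T}$, $\mathbb{Z}$, and then understanding the structure of an arbitrary continuous endomorphism $\Phi$ of $G \cong \mathbb{R}^{d_1} \times \mathbb{T}^{d_2} \times \mathbb{Z}^{d_3}$. The key observation is that ``multiplier of $\mathcal{S}(\Xi)$'' (and of $\mathcal{S}'(\Xi)$ by duality, and of $L^2(\Xi)$ trivially, since $|H_\Phi| = 1$) means that pointwise multiplication by $H_\Phi$ maps $\mathcal{S}(\Xi)$ continuously into itself. Since $\mathcal{S}(\Xi)$ is a (completed) tensor product over the factors of $\Xi = G \times \widehat{G}$, it suffices to check that $H_\Phi$, as a function of $(x,\xi) \in G \times \widehat G$, is itself a smooth function all of whose derivatives grow at most polynomially on the $\mathbb{R}$-directions and are bounded (resp. rapidly decreasing against test sequences) on the $\mathbb{T}$- and $\mathbb{Z}$-directions — i.e. that $H_\Phi$ lies in the space $\mathcal{O}_M(\Xi)$ of slowly increasing smooth functions. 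Multiplication by any element of $\mathcal{O}_M$ is continuous on $\mathcal{S}$, so this is what I would aim for.

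First I would write out what a continuous group homomorphism $\Phi\colon G \to G$ looks like. Using that $\mathbb{R}$, $\mathbb{T}$, $\mathbb{Z}$ have no nontrivial continuous homomorphisms between certain pairs (e.g. $\mathbb{T}\to\mathbb{R}$ is trivial, $\mathbb{Z}\to\mathbb{T}$ and $\mathbb{Z}\to\mathbb{R}$ and $\mathbb{Z}\to\mathbb{Z}$ are determined by one element, $\mathbb{R}\to\mathbb{Z}$ is trivial, $\mathbb{R}\to\mathbb{T}$ is $t\mapsto e^{i\lambda t}$), $\Phi$ is given by a block matrix whose entries are: linear maps on the $\mathbb{R}^{d_1}$ block, a linear map $\mathbb{R}^{d_1}\to\mathbb{T}^{d_2}$ of the form $x\mapsto Ax \bmod \mathbb{Z}$, an integer matrix on the $\mathbb{Z}^{d_3}$ block, and homomorphisms $\mathbb{Z}^{d_3}\to\mathbb{R}^{d_1}$, $\mathbb{Z}^{d_3}\to\mathbb{T}^{d_2}$, $\mathbb{T}^{d_2}\to\mathbb{T}^{d_2}$ (integer matrix), while $\mathbb{T}^{d_2}\to\mathbb{R}^{d_1}$ and $\mathbb{T}^{d_2}\to\mathbb{Z}^{d_3}$ and $\mathbb{Z}^{d_3}\to$ anything-but-lattice-compatibly, etc., are constrained appropriately. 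The upshot is that each component of $\Phi(x)$ is, in the $\mathbb{R}$-variables, an affine-linear (in fact linear) expression, and the pairing $\langle \xi, \Phi(x)\rangle$ is of the form $e^{i\,(\text{bilinear form in } x_{\mathbb{R}}, \xi_{\mathbb{R}})} \cdot (\text{unimodular character in the } \mathbb{T}\text{ and }\mathbb{Z}\text{ variables})$.

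Then the verification is routine: $H_\Phi(x,\xi) = \overline{\langle\xi,\Phi(x)\rangle}$ has modulus one everywhere, is $C^\infty$ in the $\mathbb{R}$- and $\mathbb{T}$-directions, and every derivative $\partial^\alpha H_\Phi$ is a polynomial in $(x_{\mathbb{R}},\xi_{\mathbb{R}})$ — coming from differentiating the exponential of a bilinear form — times $H_\Phi$ itself, hence is of polynomial growth and uniformly bounded in the compact/discrete directions. This is exactly the defining property of $\mathcal{O}_M(\Xi)$, so multiplication by $H_\Phi$ preserves $\mathcal{S}(\Xi)$; transposing gives preservation of $\mathcal{S}'(\Xi)$, and $|H_\Phi|\equiv 1$ gives that it is even a unitary multiplier of $L^2(\Xi)$. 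I expect the main obstacle to be purely bookkeeping: carefully enumerating the possible blocks of $\Phi$ across the three types of factors and confirming in each case that the resulting contribution to $\langle\xi,\Phi(x)\rangle$ is a phase that is polynomial-of-bounded-degree in the Euclidean variables — there is nothing deep here, but it is the step where one could overlook a degenerate homomorphism. One clean way to sidestep the case analysis is to observe that $G$ is a Lie group with finitely many connected components, $\Phi$ is smooth, and $\langle\xi,\cdot\rangle$ is a smooth character, so $x\mapsto\langle\xi,\Phi(x)\rangle$ is a smooth unimodular function whose logarithmic derivative in the $\mathbb{R}$-directions depends linearly on $\xi_{\mathbb{R}}$ and is independent of the $\mathbb{T},\mathbb{Z}$-variables; this immediately yields membership in $\mathcal{O}_M(\Xi)$ without listing every block, and is the argument I would actually write down.
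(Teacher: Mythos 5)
Your overall strategy is the same as the paper's: decompose $\Phi$ into blocks $\Phi_{j,k}\in\operatorname{Hom}(G_k,G_j)$ across the factors $\mathbb{R}^{d_1}$, $\mathbb{T}^{d_2}$, $\mathbb{Z}^{d_3}$, check that $H_\Phi$ is a smooth unimodular function with polynomially bounded derivatives (hence a multiplier of $\mathcal{S}(\Xi)$), and then get $\mathcal{S}'(\Xi)$ by duality and $L^2(\Xi)$ from $|H_\Phi|=1$. The paper's own argument is likewise a sketch that carries out exactly this block bookkeeping.

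Two of your specific claims are, however, false as stated, though fixably so. You assert that the derivatives of $H_\Phi$ are bounded in the compact/discrete directions, and (in your ``clean'' variant) that the logarithmic derivative in the $\mathbb{R}$-directions is independent of the $\mathbb{T}$- and $\mathbb{Z}$-variables. Writing $x=(x_1,x_2,x_3)\in\mathbb{R}^{d_1}\times\mathbb{T}^{d_2}\times\mathbb{Z}^{d_3}$ and $\xi=(\xi_1,\xi_2,\xi_3)\in\mathbb{R}^{d_1}\times\mathbb{Z}^{d_2}\times\mathbb{T}^{d_3}$, the block $\mathbb{R}^{d_1}\to\mathbb{T}^{d_2}$, $x_1\mapsto e^{i\Theta x_1}$, contributes a factor $e^{i\xi_2\cdot\Theta x_1}$ to $\langle\xi,\Phi(x)\rangle$, so $\partial_{x_1}H_\Phi$ grows linearly in the discrete variable $\xi_2$; similarly the block $\mathbb{Z}^{d_3}\to\mathbb{R}^{d_1}$ and the integer matrices acting on $\mathbb{T}^{d_2}$ and $\mathbb{Z}^{d_3}$ produce derivatives (e.g.\ in $x_2$ or $\xi_3$) that grow linearly in $\xi_2$ or $x_3$. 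So the derivatives of $H_\Phi$ are not bounded in the discrete directions, nor are they polynomials in the Euclidean variables alone; the correct and sufficient criterion — which $H_\Phi$ does satisfy, and which is exactly what the paper verifies — is smoothness in the continuous directions together with polynomial growth of all derivatives jointly in all variables, continuous and discrete, since multiplication by polynomially growing sequences preserves the rapidly decreasing sequences. Also, $G$ has infinitely many connected components when $d_3>0$, so the ``Lie group with finitely many components'' phrasing needs adjusting (harmlessly: homomorphisms out of $\mathbb{Z}^{d_3}$ are determined by the images of the generators). With ``bounded in the discrete directions'' replaced by ``of polynomial growth in all variables'', your argument is complete and coincides with the paper's.
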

\begin{proof}
    We only give a sketch of the proof. Writing the group $G$ as $G = G_1 \times G_2 \times G_3$, where $G_1 = \mathbb R^{d_1}$, $G_2 = \mathbb T^{d_2}$, $G_3 = \mathbb Z^{d_3}$, we can understand $\Phi$ as a ``matrix'' $\Phi = (\Phi_{j,k})_{j,k = 1, 2, 3}$, where $\Phi_{j,k} \in \operatorname{Hom}(G_k, G_j)$. Now, the form of these homomorphisms $\Phi_{j,k}$ can be described explicitly:
    \begin{itemize}
        \item $\Phi_{1,1}$ is given by multiplication with a $d_1 \times d_1$ matrix $A$: $\Phi_{1,1}x = Ax$.
        \item Since $\mathbb T^{d_2}$ is compact, the range of $\Phi_{1,2}$ has to be a compact subgroup of $\mathbb R^{d_1}$. Hence, $\Phi_{1,2} = 0$.
        \item $\Phi_{1,3}$ is again given by multiplication with a $d_1 \times d_3$ matrix $B$, $\Phi_{1,3} k = Bk$.
        \item Homomorphisms from $\mathbb R^{d_1}$ to $\mathbb T^{d_2}$ are described by a $d_2 \times d_1$ matrix $\Theta$ and the map is given by $x \mapsto e^{i\Theta x}$, where $e^{i\Theta x}$ has to be understood as the tuple where the $j$-th entry has to be understood as the exponential of $j-th$ entry of $i\Theta x$.
        \item Homomorphisms of $\mathbb T^{d_2}$ are given by multiplication with a $d_2\times d_2$ matrix $C$ with integer entries.
        \item Every homomorphism from $\mathbb Z^{d_3}$ to $\mathbb T^{d_2}$ can be uniquely described by a $d_2\times d_3$ matrix  $\zeta = (\zeta_{j,m})_{j=1, \dots, d_2, m = 1, \dots d_3}$ with elements in $\mathbb T$. The action is 
        \begin{align*}
            \begin{pmatrix}
                k_1\\
                \vdots\\
                k_{d_3}
            \end{pmatrix} \mapsto \begin{pmatrix}
                \zeta_{1,1}^{k_1} \cdot \dots \cdot \zeta_{1, d_3}^{k_{d_3}}\\
                \vdots\\
                \zeta_{d_2, 1}^{k_1} \cdot \dots \cdot \zeta_{d_2, d_3}^{k_{d_3}}
            \end{pmatrix}.
        \end{align*}
       We simply write this as $\zeta k$. 
        \item Since $\mathbb R^{d_1}$ and $\mathbb T^{d_2}$ are connected, their images under $\Phi_{3,1}$ and $\Phi_{3,2}$ are connected subgroups of $\mathbb Z^{d_3}$. Hence, $\Phi_{3,1} = 0$ and $\Phi_{3,2} = 0$.
        \item $\Phi_{3,3}$ is given by multiplication by a matrix $D$ with integer entries.
    \end{itemize}
    In total, 
    \begin{align*}
        \Phi = \begin{pmatrix}
            A & 0 & B\\
            e^{i\Theta (\cdot)} & C & \zeta {(\cdot)}\\
             0 & 0 & D
        \end{pmatrix}: \mathbb R^{d_1} \times \mathbb T^{d_2} \times \mathbb Z^{d_3} \to \mathbb R^{d_1} \times \mathbb T^{d_2} \times \mathbb Z^{d_3}.
    \end{align*}
    Having this explicit form of $\Phi$ at hand, it is not hard to verify that $H_\Phi$ is smooth and has derivatives that are polynomially bounded. Hence, it is a multiplier of $\mathcal S(\Xi)$ and, therefore, also of $\mathcal S'(\Xi)$. It is clear that this is also a multiplier of $L^2(\Xi)$, as $| H_\Phi| = 1$. 
\end{proof}
We now note the two facts, the simple verification of which is left to the reader.
\begin{lemma}\label{lemma:Phi2}
    Let $\Phi: G \to G$ be a continuous homomorphism. 
    \begin{enumerate}[(1)] 
    \item For every $(x, \xi) \in \Xi$ we have $(W_{(x, \xi)}^\Phi)^\ast = W_{(-x, -\xi)}^{I - \Phi}$.
    \item $\mathcal F_W^\Phi(A)(x, \xi) = \langle \xi, \Phi(x)\rangle \mathcal F_W^0(A)(x, \xi)$.
    \end{enumerate}
\end{lemma}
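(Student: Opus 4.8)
Both statements I would prove by direct computation from the defining formula $W_{(x,\xi)}^\Phi f(t) = \overline{\langle \xi, \Phi(x)\rangle}\,\langle\xi,t\rangle\, f(t-x)$, using only that $t \mapsto \langle\xi,t\rangle$ is a character of $G$ (so $\langle\xi,t+s\rangle = \langle\xi,t\rangle\langle\xi,s\rangle$ and $\overline{\langle\xi,t\rangle} = \langle\xi,-t\rangle = \langle-\xi,t\rangle$), translation invariance of the Haar measure on $G$, and the fact that $\Phi$ is a homomorphism, so that $\Phi(-x) = -\Phi(x)$ and $(I-\Phi)(-x) = -(I-\Phi)(x)$. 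No appeal to Lemma~\ref{lemma:phi1} is needed here; the multiplier property of $H_\Phi$ enters only later, when smoothness is transferred through $\mathcal F_\sigma \mathcal F_W^\Phi$.

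For (1), since each $W_{(x,\xi)}^\Phi$ is unitary on $L^2(G)$, it suffices to compute $\langle W_{(x,\xi)}^\Phi f, g\rangle$ for $f,g \in L^2(G)$ and read off the operator acting on $g$. Writing this pairing as $\int_G \overline{\langle\xi,\Phi(x)\rangle}\,\langle\xi,t\rangle\, f(t-x)\,\overline{g(t)}\,dt$ and substituting $t \mapsto t+x$, one arrives at $\int_G f(t)\,\overline{h(t)}\,dt$ with $h(t) = \langle\xi,\Phi(x)\rangle\,\overline{\langle\xi,x\rangle}\,\overline{\langle\xi,t\rangle}\,g(t+x)$. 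The remaining task is to recognize $h$ as $W_{(-x,-\xi)}^{I-\Phi}g$: expanding the latter produces the prefactor $\overline{\langle-\xi,(I-\Phi)(-x)\rangle} = \langle\xi,(I-\Phi)(x)\rangle = \langle\xi,x\rangle\,\overline{\langle\xi,\Phi(x)\rangle}$ in front of $\langle-\xi,t\rangle\,g(t+x)$, and this matches $h$ exactly. The only thing to watch here is the bookkeeping of complex conjugates and sign conventions in the pairing between $G$ and $\widehat G$ — that is the one place where a slip is easy.

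For (2), I would first record the trivial but convenient observation that the representations for general $\Phi$ and for $\Phi = 0$ differ only by a unimodular scalar: directly from the definitions, $W_{(x,\xi)}^\Phi = \overline{\langle\xi,\Phi(x)\rangle}\,W_{(x,\xi)}^0$. Taking adjoints gives $(W_{(x,\xi)}^\Phi)^\ast = \langle\xi,\Phi(x)\rangle\,(W_{(x,\xi)}^0)^\ast$, hence for $A \in \mathcal T^1(\mathcal H)$,
\begin{align*}
    \mathcal F_W^\Phi(A)(x,\xi) = \tr\!\big(A\,(W_{(x,\xi)}^\Phi)^\ast\big) = \langle\xi,\Phi(x)\rangle\,\tr\!\big(A\,(W_{(x,\xi)}^0)^\ast\big) = \langle\xi,\Phi(x)\rangle\,\mathcal F_W^0(A)(x,\xi).
\end{align*}
Alternatively one can deduce (2) from (1) by comparing the scalar prefactors in $(W_{(x,\xi)}^\Phi)^\ast = W_{(-x,-\xi)}^{I-\Phi}$ and $(W_{(x,\xi)}^0)^\ast = W_{(-x,-\xi)}^{I}$, but the scalar-multiple route is cleaner. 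Neither part poses a genuine obstacle; part (1), with its conjugates and signs, is the one requiring the most care.
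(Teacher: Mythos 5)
Your approach is exactly the intended one: the paper leaves the verification of this lemma to the reader, and your direct computation — part (2) via the scalar relation $W_{(x,\xi)}^\Phi=\overline{\langle\xi,\Phi(x)\rangle}\,W_{(x,\xi)}^0$, part (1) via the adjoint pairing and the substitution $t\mapsto t+x$ — is precisely that verification. There is, however, one bookkeeping slip in part (1), of exactly the kind you warned about. Since $\langle-\xi,-y\rangle=\langle\xi,y\rangle$, the prefactor of $W_{(-x,-\xi)}^{I-\Phi}g$ is
\begin{equation*}
\overline{\langle-\xi,(I-\Phi)(-x)\rangle}\;=\;\overline{\langle\xi,(I-\Phi)(x)\rangle}\;=\;\langle\xi,\Phi(x)\rangle\,\overline{\langle\xi,x\rangle},
\end{equation*}
whereas you wrote $\overline{\langle-\xi,(I-\Phi)(-x)\rangle}=\langle\xi,(I-\Phi)(x)\rangle=\langle\xi,x\rangle\,\overline{\langle\xi,\Phi(x)\rangle}$, i.e.\ you dropped the outer conjugation; the value you display is the complex conjugate of the correct one and does \emph{not} match your (correct) $h(t)=\langle\xi,\Phi(x)\rangle\,\overline{\langle\xi,x\rangle}\,\overline{\langle\xi,t\rangle}\,g(t+x)$, even though you assert it does. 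With the conjugation restored the two prefactors coincide and the identification $(W_{(x,\xi)}^\Phi)^\ast=W_{(-x,-\xi)}^{I-\Phi}$ follows as you intend; part (2) is correct as written.
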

We denote by $\mathcal S(\mathcal H)$ the space of all linear operators mapping $\mathcal S'(G) \to \mathcal S(G)$ continuously. Since $\mathcal S(G)$ is a nuclear space, the embedding $\iota: \mathcal S(G) \hookrightarrow L^2(G)$ is nuclear \cite[Theorem 50.1(c)]{Treves1967}. Therefore, every $A \in \mathcal S(\mathcal H)$ can be identified with the operator $\iota \circ A|_{L^2(G)} \in \mathcal L(\mathcal H)$, which is then trace class. Using that every such $A$ is given by an integral kernel in $\mathcal S(G \times G)$, direct computations with the integral kernels of $A$ and the integral kernels of the Weyl operators show that $A \in \mathcal S(\mathcal H)$ if and only if $\mathcal F_W^{0}(A) \in \mathcal S(\Xi)$. By Lemmas \ref{lemma:phi1} and \ref{lemma:Phi2}, we therefore obtain that $\mathcal F_W^\Phi: \mathcal S(\mathcal H) \to \mathcal S(\Xi)$ is a topological isomorphism (since multiplication by $H_\Phi$ is a multiplier of $\mathcal S(\Xi)$ and clearly also invertible). The same goes for the inverse map $(\mathcal F_W^{\Phi})^{-1}: f \mapsto \int_\Xi f(z) W_z^\Phi dz$. Here, $dz$ is the product measure of some Haar measure on $G$ and its dual Plancherel measure on $\widehat{G}$. By duality, the Fourier transforms extend to topological isomorphisms $\mathcal F_W^\Phi: \mathcal S'(\mathcal H) \to \mathcal S'(\Xi)$ and $(\mathcal F_W^\Phi)^{-1}: \mathcal S'(\Xi) \to \mathcal S'(\mathcal H)$. Here, $\mathcal S'(\mathcal H)$ is the set of all continuous linear operators mapping $\mathcal S(G) \to \mathcal S'(G)$.

The operators $W_z^\Phi$ still form an irreducible projective unitary representation of $\Xi$, i.e.\ they satisfy $W_z^\Phi W_w^\Phi = m_\Phi(z, w) W_{z+w}^\Phi$. Then, $\sigma(z, w) = m_\Phi(z, w)/m_\Phi(w, z)$ serves as our substitute for the symplectic form. We define the symplectic Fourier transform by
\begin{align*}
    \mathcal F_\sigma(f)(w) = \int_\Xi \sigma(z,w)f(z)~dz.
\end{align*}
 We define $\Phi$-quantization as the map from $\mathcal S'(\Xi)$ to $\mathcal S'(\mathcal H)$ which is formally given by
 \begin{align*}
     \mathrm{op}^\Phi(f)(\varphi)(x) = \int_{\widehat{G}}\int_G f(x - \Phi(x-y), \xi) \varphi(y) \langle x-y, \xi\rangle ~dy~d\xi.
 \end{align*}
 Just as in Lemma \ref{lem:WeylQuant}, one proves that:
 \begin{lemma}
     $\operatorname{op}^\Phi = (\mathcal F_W^{\Phi})^{-1} \circ \mathcal F_\sigma$.
 \end{lemma}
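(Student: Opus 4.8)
The plan is to mimic the explicit computation from the proof of Lemma \ref{lem:WeylQuant} verbatim, simply tracking the extra multiplier factors that distinguish $\Phi$-quantization from Weyl quantization. As with the Weyl case, it suffices to verify the identity $(\mathcal F_W^\Phi)^{-1}(\mathcal F_\sigma(f)) = \operatorname{op}^\Phi(f)$ for $f$ in the nice space $\mathcal S(\Xi)$, since all the maps involved (the symplectic Fourier transform on $\mathcal S'(\Xi)$, the Fourier-Weyl transform $(\mathcal F_W^\Phi)^{-1}$, and $\operatorname{op}^\Phi$) are continuous on the respective distribution spaces, so the equality propagates to all of $\mathcal S'(\Xi)$ by density. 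Pairing against a test function $\varphi \in \mathcal S(G)$, I would write out $(\mathcal F_W^\Phi)^{-1}(\mathcal F_\sigma(f))\varphi(t) = \int_\Xi \mathcal F_\sigma(f)(w) \, W_w^\Phi(\varphi)(t) \, dw$, insert the definitions of $\mathcal F_\sigma$ and of $W_{(x,\xi)}^\Phi f(t) = \overline{\langle \xi, \Phi(x)\rangle}\langle \xi, t\rangle f(t-x)$, and then collapse the $\widehat{G}$-integrals using Fourier inversion (the $\widehat{G}$-integral of $\langle t, \xi \rangle \overline{\langle y, \xi\rangle}$-type kernels produces a point mass $\delta$ in the position variable, exactly as the $\eta$-integral produced $\delta_{(t+x)/2}$ in the Weyl computation).

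The key structural point, which makes this work uniformly in $\Phi$, is Lemma \ref{lemma:Phi2}: the identity $(W_{(x,\xi)}^\Phi)^\ast = W_{(-x,-\xi)}^{I-\Phi}$ governs how the adjoint enters, and the relation $\mathcal F_W^\Phi(A)(x,\xi) = \langle \xi, \Phi(x)\rangle \mathcal F_W^0(A)(x,\xi)$ (equivalently, $(\mathcal F_W^\Phi)^{-1} = (\mathcal F_W^0)^{-1}\circ M_{H_\Phi}$ for the multiplier $M_{H_\Phi}$ of Lemma \ref{lemma:phi1}) shows that passing from $\Phi = 0$ to general $\Phi$ amounts to inserting the unimodular factor $\overline{\langle \xi,\Phi(x)\rangle}$ inside the group-Fourier integral. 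After collapsing the dual-group integral, the surviving position variable gets shifted by $\Phi(\cdot)$ evaluated at the difference of the two position arguments, which is precisely the argument $x - \Phi(x-y)$ appearing in the stated formula for $\operatorname{op}^\Phi$. So the computation is: (1) substitute and reorder integrals, (2) do the $\widehat{G}$-Fourier inversion to get a $\delta$, (3) integrate out the $\delta$, (4) recognize the result as $\operatorname{op}^\Phi(f)(\varphi)(x)$ after renaming variables.

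The main obstacle is bookkeeping rather than conceptual: one must be careful with the pairings $\langle\cdot,\cdot\rangle$ between $G$ and $\widehat{G}$, the choice of Haar measure on $G$ and its dual Plancherel measure on $\widehat{G}$ (so that Fourier inversion holds with the right normalization and no stray constants survive — a genuine worry on $\mathbb T^{d_2}\times\mathbb Z^{d_3}$ where the "inversion" mixes a sum and an integral), and the fact that on the $\mathbb T$- and $\mathbb Z$-factors some of the formal manipulations (substitutions like $t - x \mapsto x$) are sums or integrals over different groups, so one should really argue factor by factor on $G = \mathbb R^{d_1}\times\mathbb T^{d_2}\times\mathbb Z^{d_3}$ or else invoke the general Plancherel/inversion theorem on the LCA group $\widehat{G}$ wholesale. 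Since the excerpt already grants the corresponding computation in Lemma \ref{lem:WeylQuant} and explicitly says "just as in Lemma \ref{lem:WeylQuant}," I would present the proof as: reduce to $f \in \mathcal S(\Xi)$ and $\varphi \in \mathcal S(G)$ by continuity/density, carry out the four-step computation above paying attention to the multiplier $H_\Phi$ and the adjoint formula from Lemma \ref{lemma:Phi2}, and note that the only place where $\Phi$ enters nontrivially is through the shift $x \mapsto x - \Phi(x-y)$ in the final kernel, which matches the definition of $\operatorname{op}^\Phi$ exactly.
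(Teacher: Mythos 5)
Your proposal is correct and is essentially the paper's own argument: the paper gives no separate proof for this lemma, saying only that one proceeds ``just as in Lemma \ref{lem:WeylQuant}'', and your four-step computation (reduce to $f\in\mathcal S(\Xi)$, insert the definitions of $\mathcal F_\sigma$ and $W^\Phi$, collapse the $\widehat G$-integral to a point mass via Fourier inversion with dual Haar/Plancherel measures, and read off the kernel with the shift $x-\Phi(x-y)$) is exactly that computation carried out with the multiplier $\overline{\langle\xi,\Phi(x)\rangle}$ in place of the Weyl phase. The only cosmetic remark is that the adjoint formula of Lemma \ref{lemma:Phi2} is not actually needed here, since $(\mathcal F_W^\Phi)^{-1}$ enters directly through its integral formula $f\mapsto\int_\Xi f(z)W_z^\Phi\,dz$.
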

 As special instances, we have the $\tau$-quantization for each $\tau \in \mathbb Z$ on $G$ or the $A$-quantization on $G = \mathbb R^d$, where $A \in \mathbb R^{d \times d}$ and $\mathrm{op}^A(f)$ acts by
 \begin{align*}
     \mathrm{op}^A(f) &= \int_{\mathbb R^d}\int_{\mathbb R^d} f(x - A(x-y), \xi)\varphi(y) e^{i\xi(x-y)}~dy~d\xi.
 \end{align*}
See, e.g., \cite{Toft2017} for a general discussion concerning this $A$-quantization.

The covariance property $\alpha_z(\mathrm{op}^\Phi(f)) = \mathrm{op}^\Phi(\alpha_z(f))$ is still satisfied, as an immediate consequence, where of course $\alpha_z(f)(w) = f(w-z)$. Identifying $\Xi$ with $\widehat{\Xi}$ through $z \mapsto \sigma(z, \cdot)$, we can write the action of the dual phase space as $\gamma_z(f)(w) = \sigma(z, w)f(w)$. We can now introduce the STFT of $f \in \mathcal S(\Xi)$ with respect to the window $0 \neq g \in \mathcal S(\Xi)$ by
\begin{align*}
    V_g f(z, w) = \langle f, \gamma_w \alpha_z(g)\rangle
\end{align*}
and define $M^{\infty, 1}(\Xi)$ as the space of all $f \in \mathcal S'(\Xi)$ such that $V_g f \in L^{\infty, 1}(\Xi)$, i.e.,
\begin{align*}
    \int_\Xi \sup_{z \in \Xi} |V_g f(z, w)| ~dw < \infty.
\end{align*}
When defining the modulation of an operator, we now have to accept that dividing by $2$ is, in general, not possible. So instead of multiplying the operator with $W_{\frac{z}{2}}$ from each side, we set $\gamma_z^\Phi(A) = W_{z}^\Phi A$. One defines $M^{\infty, 1}(\mathcal H)$ to be the set of all $A \in \mathcal S'(\mathcal H)$ such that $\langle A, \gamma_w \alpha_z(B)\rangle \in L^{\infty, 1}(\Xi)$. By computations similar to the case $\Xi = \mathbb R^{2d}$, one proves that $A = \mathrm{op}^\Phi(f) \in M^{\infty, 1}(\mathcal H)$ if and only if $f \in M^{\infty, 1}(\Xi)$ and $f \mapsto \mathrm{op}^\Phi(f)$ is an isometric isomorphism (upon choosing appropriate norms). Note that, as an immediate consequence, we obtain that $M^{\infty, 1}(\Xi)$ is invariant under the map $\mathcal F_\sigma \circ \mathcal F_W^{\Phi'} \circ (\mathcal F_W^{\Phi})^{-1} \circ \mathcal F_\sigma$ for two different choices $\Phi, \Phi' \in \operatorname{Hom}(G)$, which maps a $\Phi$-symbol to the corresponding $\Phi'$-symbol.

By this, we can reduce the question about classifying the Heisenberg-smooth operators on $L^2(G)$ to the case of $\Phi$ being the zero morphism, i.e., once we prove that the space of Heisenberg-smooth operators on $L^2(G)$ agrees with $\mathrm{op}^{0}(C_b^\infty(\Xi))$, we know just as in Section \ref{sec:3} that this class equals $\mathrm{op}^\Phi(C_b^\infty(\Xi))$ for each $\Phi \in \operatorname{Hom}(G)$. 

The last thing one has to discuss is the appropriate embedding results. 

\begin{lemma}
    $M^{\infty,1}(\mathcal H) \hookrightarrow \mathcal L(\mathcal H)$.
\end{lemma}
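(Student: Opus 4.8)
The plan is to mimic the proof of Lemma~\ref{lem:embMO} for $\Xi = \mathbb{R}^{2d}$, tracing through where the phase space $\mathbb{R}^{2d}$ was used and replacing it with the general abelian group $\Xi = G \times \widehat{G}$. Concretely, I would establish the chain
\[
M^{\infty,1}(\mathcal{H}) \overset{(\mathrm{op}^0)^{-1}}{\longrightarrow} M^{\infty,1}(\Xi) \overset{\text{?}}{\hookrightarrow} L^\infty(\Xi),
\]
together with the boundedness statement $\|\mathrm{op}^0(f)\|_{op} \lesssim \|f\|_{M^{\infty,1},g}$, which would give the desired embedding. The first arrow is already available: the excerpt has just observed (via the computations analogous to the $\mathbb{R}^{2d}$ case) that $f \mapsto \mathrm{op}^\Phi(f)$ is an isometric isomorphism from $M^{\infty,1}(\Xi)$ to $M^{\infty,1}(\mathcal{H})$, so in particular for $\Phi = 0$ we may transfer the question to symbols. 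So the two things that actually require an argument are (a) the continuous embedding $M^{\infty,1}(\Xi) \hookrightarrow L^\infty(\Xi)$, and (b) a Calder\'on--Vaillancourt-type bound $\mathrm{op}^0(f) \in \mathcal{L}(\mathcal{H})$ with norm controlled by $\|f\|_{M^{\infty,1},g}$ on the group $\Xi$.

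For (a), I would give the direct STFT argument rather than citing Toft: fix $0 \neq g \in \mathcal{S}(\Xi)$, normalize so $\langle g, g \rangle = 1$, and use the inversion formula $f = \int_\Xi \int_\Xi V_g f(z,w)\, \gamma_w \alpha_z(g)\, dz\, dw$ (valid weakly on $\mathcal{S}'(\Xi)$, with the usual weak interpretation). Pairing against a point evaluation — more carefully, against $\gamma_{w'}\alpha_{z'}(g)$ and using that $V_g(\gamma_w\alpha_z(g))$ is an $L^1_w L^\infty_z$-dominated kernel since $g \in \mathcal{S}(\Xi)$ — one gets the pointwise bound $|f(u)| \lesssim \int_\Xi \sup_z |V_g f(z,w)|\, dw = \|f\|_{M^{\infty,1},g}$. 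This works verbatim on $\Xi = \mathbb{R}^{2d_1}\times \mathbb{T}^{d_2+d_3}\times\mathbb{Z}^{d_2+d_3}$ once one notes $\mathcal{S}(\Xi)$ is closed under the modulations $\gamma_z$ and shifts $\alpha_z$, which follows from Lemma~\ref{lemma:phi1} (the symplectic-form-type function $\sigma(z,\cdot)$ is a multiplier of $\mathcal{S}(\Xi)$) and the obvious invariance under translations. For (b), I would again run the standard Sj\"ostrand/Gr\"ochenig argument at the operator level: using the operator STFT and the established isometry $\|\mathrm{op}^0(f)\|_{M^{\infty,1},B} \asymp \|f\|_{M^{\infty,1},g}$, one reduces to showing $M^{\infty,1}(\mathcal{H}) \hookrightarrow \mathcal{L}(\mathcal{H})$ directly — but that is circular, so instead one proves the operator-norm bound from the symbol side via a Cotlar--Stein / almost-orthogonality decomposition of $\mathrm{op}^0(f)$ into pieces indexed by a lattice in $\Xi$ (where on the $\mathbb{T}$ and $\mathbb{Z}$ factors the "lattice" is the whole factor, so the relevant sums become finite or are handled by compactness), the $M^{\infty,1}$ norm supplying the summable envelope.

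The main obstacle I anticipate is purely bookkeeping rather than conceptual: making the Cotlar--Stein decomposition on the mixed group $\mathbb{R}^{2d_1}\times\mathbb{T}^{d_2+d_3}\times\mathbb{Z}^{d_2+d_3}$, where the notion of "lattice in phase space" and the almost-orthogonality estimates between translated/modulated pieces need to be set up uniformly across the three types of factors. On $\mathbb{R}$ this is the classical Calder\'on--Vaillancourt argument; on $\mathbb{Z}$ and $\mathbb{T}$ one has compactness or discreteness to exploit, but checking that the constants combine into the single bound $\|\mathrm{op}^0(f)\|_{op}\lesssim \|f\|_{M^{\infty,1},g}$ takes care. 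Given the paper's stated philosophy of keeping Section~\ref{sec:3} as a sketch, I expect the actual proof in the paper to be short: cite the QHA framework of \cite{Fulsche_Galke2023} for the operator STFT machinery, invoke the isometry $\mathrm{op}^0\colon M^{\infty,1}(\Xi)\to M^{\infty,1}(\mathcal{H})$ already noted above, and reduce to $M^{\infty,1}(\Xi)\hookrightarrow L^\infty(\Xi)$ plus the Calder\'on--Vaillancourt bound on $\Xi$, either of which can be cited from the locally-compact-abelian-group literature or proved by the STFT inversion argument sketched in (a). So my proposal is: (1) recall $\mathrm{op}^0$ is an isometric isomorphism $M^{\infty,1}(\Xi)\to M^{\infty,1}(\mathcal{H})$; (2) prove/cite $M^{\infty,1}(\Xi)\hookrightarrow L^\infty(\Xi)$ by STFT inversion; (3) prove/cite the Calder\'on--Vaillancourt estimate $\|\mathrm{op}^0(f)\|_{op}\lesssim\|f\|_{M^{\infty,1},g}$ on $\Xi$; (4) compose to conclude $M^{\infty,1}(\mathcal{H})\hookrightarrow\mathcal{L}(\mathcal{H})$.
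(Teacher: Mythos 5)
Your proposal is correct and takes essentially the same route as the paper: the paper's proof consists precisely of your steps (1) and (3), namely combining the previously established isometric isomorphism $\operatorname{op}^0\colon M^{\infty,1}(\Xi)\to M^{\infty,1}(\mathcal H)$ with the Calder\'on--Vaillancourt-type bound $\|\operatorname{op}^0(f)\|_{op}\leq \|f\|_{M^{\infty,1}(\Xi)}$, which it simply cites from \cite[Theorem 5.2]{Grochenig_Strohmer2007} rather than reproving via Cotlar--Stein. Your step (2), the embedding $M^{\infty,1}(\Xi)\hookrightarrow L^\infty(\Xi)$, is not needed for this lemma (it appears separately later in the section), but including it does no harm.
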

\begin{proof} This was proven in \cite[Theorem 5.2]{Grochenig_Strohmer2007} by showing that $\| \mathrm{op}^0(f)\|_{op} \leq \| f\|_{M^{\infty, 1}(\Xi)}$. 
\end{proof}

For functions $f: \Xi \to \mathbb C$, where $\Xi \cong \mathbb R^{2d_1} \times \mathbb T^{d_2 + d_3} \times \mathbb Z^{d_2 + d_3}$, we can consider derivatives in the directions of $\mathbb R^{2d_1}$-variables and in the direction of $\mathbb T^{d_2 + d_3}$-variables. Denote by $\partial_j f$ the partial derivative in direction of $e_j \in \mathbb R^{2d_1}$, $j = 1, \dots, 2d_1$, and by $\partial_j' f$ the partial derivative in direction of the $j$-th variable of $\mathbb T^{d_2 + d_3}, j = 1, \dots, d_2 + d_3$. For higher order derivatives, we let for $\alpha \in \mathbb N_0^{2d}$ and $\beta \in \mathbb N_0^{d_2 + d_3}$ denote by $(\partial ')^\beta \partial^\alpha f$ the corresponding iterated derivative. Now, we clearly have (with the standard multi-index notation), 
\begin{align*}
    \mathcal F_\sigma((\partial')^\beta \partial^\alpha f)(x, \theta, k) \cong C_{\alpha, \beta} x^\alpha k^\beta \mathcal F_\sigma(f)(x, \theta, k), \quad (x, \theta, k) \in \mathbb R^{2d} \times \mathbb T^{d_2 + d_3} \times \mathbb Z^{d_2 + d_3}.
\end{align*}
Here, $C_{\alpha, \beta}$ is a unimodular constant. We consider the class of symbols
\begin{align*}
    C_b^{2d_1+1, 2d_2 + 2d_3}(\Xi) = \{ f: \Xi \to \mathbb C; ~&f \text{ is } 2d_1+1\text{-times differentiable in } \mathbb R^{2d_1} \\
    & \text{and } 2d_2+2d_3\text{-times differentiable in } \mathbb T^{d_2 + d_3} \\
    & \text{with bounded continuous derivatives}\,\}.
\end{align*}
Clearly, $C_b^{2d_1 + 1, 2d_2 + 2d_3}(\Xi)$ is a Banach space if it is equipped with the obvious choice of norm. 

\begin{lemma}
    $C_b^{2d_1 + 1, 2d_2 + 2d_3}(\Xi) \hookrightarrow M^{\infty, 1}(\Xi)$.
\end{lemma}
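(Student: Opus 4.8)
The plan is to imitate the classical proof that $C_b^{N}(\mathbb R^{2d})\hookrightarrow M^{\infty,1}(\mathbb R^{2d})$ (Lemma \ref{lem:embCF}, i.e.\ \cite[Thm.\ 14.5.3]{Grochenig2001}), adapting it to the mixed geometry $\Xi \cong \mathbb R^{2d_1}\times\mathbb T^{d_2+d_3}\times\mathbb Z^{d_2+d_3}$. Fix a window $0\neq g\in\mathcal S(\Xi)$, and recall that $V_g f(z,w) = \langle f,\gamma_w\alpha_z(g)\rangle$. As in the classical argument, I would first rewrite $V_g f(z,w)$ as a symplectic Fourier transform in the first variable: by the covariance relations $\mathcal F_\sigma(\alpha_z g)=\gamma_z\mathcal F_\sigma g$ and $\mathcal F_\sigma(\gamma_w g)=\alpha_w\mathcal F_\sigma g$, together with Parseval for $\mathcal F_\sigma$, one gets an identity of the shape $|V_g f(z,w)| = |\mathcal F_\sigma(f\cdot \overline{\alpha_{w}(\tilde g)})(z)|$ for a suitable $\tilde g\in\mathcal S(\Xi)$ (the precise translation/modulation bookkeeping mirrors the computation in the proof of Lemma \ref{lem:OCM}; the unimodular constants are irrelevant for the absolute value).

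Next comes the integration-by-parts step, which is where the mixed geometry enters. On the $\mathbb R^{2d_1}$-factor the symplectic Fourier transform turns derivatives into polynomial multiplication by $x^\alpha$, exactly as classically; on the $\mathbb T^{d_2+d_3}$-factor, Fourier analysis turns the $\mathbb T$-derivatives $(\partial')^\beta$ into multiplication by $k^\beta$ where $k\in\mathbb Z^{d_2+d_3}$ is the dual variable — this is precisely the displayed relation $\mathcal F_\sigma((\partial')^\beta\partial^\alpha f)\cong C_{\alpha,\beta}\,x^\alpha k^\beta\,\mathcal F_\sigma(f)$ recorded just before the statement. Since $f\cdot\overline{\alpha_w(\tilde g)}\in C_b^{2d_1+1,\,2d_2+2d_3}$ with all the relevant derivatives bounded \emph{uniformly in $w$} (because $\tilde g\in\mathcal S$ and shifting does not change sup-norms of its derivatives, and the product rule only produces finitely many terms $(\partial')^{\beta'}\partial^{\alpha'}f\cdot(\partial')^{\beta''}\partial^{\alpha''}\overline{\alpha_w\tilde g}$ with $|\alpha'|+|\alpha''|\le 2d_1+1$, $|\beta'|+|\beta''|\le 2d_2+2d_3$), the Riemann–Lebesgue/Hausdorff–Young bound gives, for every admissible pair $(\alpha,\beta)$,
\begin{align*}
    |V_g f(z,w)| \;\lesssim\; \frac{C_{f,g}}{|x^\alpha|\,(1+|k|)^{|\beta|}}, \qquad w=(y,\eta)\in\mathbb R^{2d_1}\times\mathbb T^{d_2+d_3}\times\mathbb Z^{d_2+d_3},
\end{align*}
with $C_{f,g}$ independent of $z$ and $w$; here I am abusing notation and writing $x,k$ for the $\mathbb R^{2d_1}$- and $\mathbb Z^{d_2+d_3}$-components of $w$ under the identification $\Xi\cong\widehat\Xi$ (note $\widehat{\mathbb T^{d_2+d_3}}=\mathbb Z^{d_2+d_3}$ and $\widehat{\mathbb Z^{d_2+d_3}}=\mathbb T^{d_2+d_3}$, so the $\mathbb T$-derivatives do buy decay in the discrete dual variable, and the $\mathbb R$-derivatives buy decay in the Euclidean dual variable).

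Taking the supremum over $z$ and then the minimum over the finitely many admissible $(\alpha,\beta)$, it remains to show that the resulting majorant is integrable over the $w$-space with respect to the product measure (Lebesgue on $\mathbb R^{2d_1}$, counting on $\mathbb Z^{d_2+d_3}$, and — harmlessly, since it has finite total mass — Haar on $\mathbb T^{d_2+d_3}$, which plays no role since the $\eta\in\widehat{\mathbb Z^{d_2+d_3}}=\mathbb T^{d_2+d_3}$ component carries no decay but a compact domain). The $\mathbb R^{2d_1}$-part is exactly the classical computation cited from \cite{Grochenig2001}: choosing $\min_{|\alpha|\le 2d_1+1}|x^\alpha|^{-1}$ is integrable over $\mathbb R^{2d_1}$ precisely because $2d_1+1 > 2d_1$. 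The $\mathbb Z^{d_2+d_3}$-part requires $\sum_{k\in\mathbb Z^{d_2+d_3}}(1+|k|)^{-N}<\infty$, which holds for $N=2d_2+2d_3 > d_2+d_3$; this is why $2(d_2+d_3)$ $\mathbb T$-derivatives are demanded. Finally, by the window-independence of the $M^{\infty,1}(\Xi)$-norm, the estimate $\|f\|_{M^{\infty,1},g}\lesssim \|f\|_{C_b^{2d_1+1,2d_2+2d_3}}$ yields the claimed continuous embedding. The main obstacle is purely bookkeeping: getting the modulation/translation identity for $V_g f$ right in the projective-representation setting and making sure that in each block of variables the derivative one takes is dual to the variable in which decay is needed (Euclidean $\leftrightarrow$ Euclidean, torus $\leftrightarrow$ integer lattice), so that the product of a power-decay factor on $\mathbb R^{2d_1}$ and a power-decay factor on $\mathbb Z^{d_2+d_3}$ is jointly integrable; once the separation-of-variables structure is in place, everything reduces to the two elementary one-block summability/integrability facts above.
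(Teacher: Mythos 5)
Your argument is essentially the paper's proof: rewrite $V_g f$ (up to a unimodular factor) as a symplectic Fourier transform of $f$ times a $z$-shifted window, use the relation $\mathcal F_\sigma((\partial')^\beta\partial^\alpha h)\cong C_{\alpha,\beta}\,x^\alpha k^\beta\,\mathcal F_\sigma(h)$ together with Riemann--Lebesgue and the product rule to get a bound uniform in $z$, and then check integrability of the decay majorant over $\mathbb R^{2d_1}\times\mathbb Z^{d_2+d_3}$ (the compact torus factor being harmless), with $2(d_2+d_3)$ torus derivatives giving summability in $k$ --- the paper does exactly this, only choosing exponent $2$ on the nonzero coordinates of $k$ where you use the equivalent $(1+|k|)^{-2(d_2+d_3)}$ majorant. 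The one slip is the interchange of $z$ and $w$ in your intermediate identity (it should be $|V_gf(z,w)|=|\mathcal F_\sigma(f\cdot\alpha_z(\overline{g}))(w)|$, as in the paper), but since your final estimate places the decay in the $w$-variable uniformly in $z$, this is only a bookkeeping typo and does not affect the argument.
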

\begin{proof} As in the case of $\Xi = \mathbb R^{2d}$ (see again \cite[Theorem 14.5.3]{Grochenig2001}), the STFT $V_gf$ of $f \in C_b^{2d_1 + 1, 2d_2 + 2d_3}(\Xi)$ with window $0 \neq g \in \mathcal S(\Xi)$ can be estimated (with $w = (x, \theta, k)$) by
\begin{align*}
    |V_g f(z, w)| &\cong \frac{1}{|x^\alpha k^\beta|} |\mathcal F_\sigma ((\partial')^\beta \partial^\alpha (f \cdot \alpha_z(\overline{g}))(w)|\\
    &\lesssim \frac{1}{|x^\alpha k^\beta|} \| f\|_{C_b^{2d_1 + 1, 2d_2 + 2d_3}}.
\end{align*}
Hence, for such $f$ we have
\begin{align*}
    \| V_g f\|_{L^{\infty, 1}} &\lesssim \| f\|_{C_b^{2d_1 + 1, d_2 + d_3}} \int_{\Xi} \min_{|\alpha| \leq 2d_1+1, |\beta| \leq 2d_2 + 2d_3} \frac{1}{|x^\alpha| |k^\beta|} ~dw\\
    &\leq \| f\|_{C_b^{2d_1 + 1, d_2 + d_3}} \int_{\mathbb R^{2d_1}} \min_{|\alpha| \leq 2d_1 + 1} \frac{1}{|x^\alpha|}~dx \sum_{k \in \mathbb Z^{d_2 + d_3}} \min_{|\beta| \leq 2d_2 + 2d_3} \frac{1}{|k^\beta|}.
\end{align*}
The first integral is, as already noted earlier, known to be finite. For the infinite sum, we have, abbreviating $d' = d_2 + d_3$,
\begin{align*}
    \sum_{k \in \mathbb Z^{d'}} \min_{|\beta| \leq 2d'} \frac{1}{|k^\beta|} &\leq \sum_{k_1, \dots, k_{d'} \neq 0} \frac{1}{k_1^2 \cdot \hdots \cdot k_{d'}^2} + d'\sum_{k_1 = 0, k_2, \dots, k_{d'} \neq 0} \frac{1}{k_1^0 k_2^2 \dots k_{d'}^2}   \\
    &\quad + \binom{d'}{2}\sum_{k_1 = k_2 = 0, k_3, \dots k_{d'} \neq 0} \frac{1}{k_1^0 k_2^0 k_3^2 \dots k_{d'}^2} \\
    &\quad \dots + d'\sum_{k_1 = \dots = k_{d' - 1} = 0, k_{d'} \neq 0} \frac{1}{k_1^0 \dots k_{d' - 1}^0 k_{d'}^2} + \sum_{k_1 = \dots = k_{d'} = 0} \frac{1}{k_1^0 \dots k_{d'}^0}\\
    &\quad < \infty.
\end{align*}
Here, we wrote the $0$-exponents explicitly to clarify how the multi-index $\beta$ was chosen for the particular subset of $\mathbb Z^{d'}$ over which we are summing. This shows $C_b^{2d_1 + 1, 2d_2 + 2d_3}(\Xi) \hookrightarrow M^{\infty, 1}(\Xi)$.
\end{proof}

Analogously, we can now define the spaces $C^{k, m}(\mathcal H)$ of operators which are $k$-times continuously differentiable in $\mathbb R^{2d}$-directions and $m$-times continuously differentiable in $\mathbb T^{d_2 + d_3}$-directions. $C_b^\infty(\Xi)$ and $C^\infty(\mathcal H)$ are now, respectively, the intersections of all the spaces $C_b^{k, m}(\Xi)$ and $C^{k, m}(\mathcal H)$. Of course, results such as the product rule and the quotient rule hold for the differentiation of operators on $\mathcal H$, proving that $C^{k,m}(\mathcal H)$ as well as $C^\infty(\mathcal H)$ are spectrally invariant subalgebras of $\mathcal L(\mathcal H)$.

\begin{lemma}
    The following inclusions hold true continuously: $C^{2d_1 + 1, 2d_2 + 2d_3}(\mathcal H) \hookrightarrow M^{\infty, 1}(\mathcal H)$ and $M^{\infty, 1}(\Xi) \hookrightarrow L^\infty(\Xi)$.
\end{lemma}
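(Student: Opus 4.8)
The plan is to mirror the two embedding lemmas already established on the function side ($C_b^{2d_1+1,2d_2+2d_3}(\Xi) \hookrightarrow M^{\infty,1}(\Xi)$ and $M^{\infty,1}(\Xi)\hookrightarrow L^\infty(\Xi)$) and transport them through the isometric isomorphism $\mathrm{op}^{0}\colon M^{\infty,1}(\Xi)\to M^{\infty,1}(\mathcal H)$ together with the covariance relation $\alpha_z(\mathrm{op}^0(f))=\mathrm{op}^0(\alpha_z(f))$. For the second inclusion $M^{\infty,1}(\mathcal H)\hookrightarrow L^\infty(\Xi)$ — which I read as the operator-norm bound $M^{\infty,1}(\mathcal H)\hookrightarrow\mathcal L(\mathcal H)$, already recorded as a separate lemma via \cite[Theorem 5.2]{Grochenig_Strohmer2007} — nothing new is needed: it is literally the preceding lemma, restated for emphasis alongside its function-space analogue. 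So the real content is the first inclusion $C^{2d_1+1,2d_2+2d_3}(\mathcal H)\hookrightarrow M^{\infty,1}(\mathcal H)$.

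For that, I would follow the proof of Lemma~\ref{lem:OCM} verbatim, replacing $\mathbb R^{2d}$ by $\Xi$ and keeping track of the two kinds of derivatives. Fix a window $0\neq B\in\mathcal S(\mathcal H)$ and write, as in the $\mathbb R^{2d}$ case, $|V_BA(z,w)|=|\mathcal F_W^0(A\,\alpha_{z-?}(B^\ast))(w)|$ up to a unimodular constant (the precise shift is whatever the analogue of the CCR bookkeeping produces on $\Xi$; with $\gamma_z^\Phi(A)=W_z^\Phi A$ the algebra is slightly different from the $\mathbb R^{2d}$ case but still elementary). Then use the differentiation rule $\mathcal F_W^0((\partial')^\beta\partial^\alpha D)(x,\theta,k)\cong C_{\alpha,\beta}\,x^\alpha k^\beta\,\mathcal F_W^0(D)(x,\theta,k)$ — the operator counterpart of the identity stated just before the symbol-space lemma — to get
\begin{align*}
    |V_BA(z,w)|\cong \frac{1}{|x^\alpha k^\beta|}\,\bigl|\mathcal F_W^0\bigl((\partial')^\beta\partial^\alpha(A\,\alpha_{z'}(B^\ast))\bigr)(w)\bigr|
\end{align*}
for $w=(x,\theta,k)$ and any $|\alpha|\le 2d_1+1$, $|\beta|\le 2d_2+2d_3$. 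Bound the right-hand side by the trace norm via the Riemann--Lebesgue lemma for $\mathcal F_W^0$, expand $(\partial')^\beta\partial^\alpha(A\,\alpha_{z'}(B^\ast))$ by the Leibniz rule into finitely many terms $(\partial^{\alpha'}(\partial')^{\beta'}A)(\alpha_{z'}(\partial^{\alpha''}(\partial')^{\beta''}B^\ast))$, note each $\partial^{\alpha'}(\partial')^{\beta'}A\in\mathcal L(\mathcal H)$ by hypothesis while the shifted derivative of $B^\ast$ stays in $\mathcal S(\mathcal H)$ with trace norm independent of the shift, and conclude $\|V_BA\|_{L^{\infty,1}}\lesssim \|A\|_{C^{2d_1+1,2d_2+2d_3}}\cdot\int_{\mathbb R^{2d_1}}\min_{|\alpha|\le 2d_1+1}|x^\alpha|^{-1}\,dx\cdot\sum_{k\in\mathbb Z^{d_2+d_3}}\min_{|\beta|\le 2d_2+2d_3}|k^\beta|^{-1}$, both factors finite by the computation in the previous (symbol-space) lemma.

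The only genuinely new bookkeeping is the interaction between the position-space derivatives (on the $\mathbb R^{2d_1}$ block) and the dual-phase-space variables, i.e.\ correctly matching the $\mathbb T^{d_2+d_3}$-directional derivatives of the \emph{operator} $A$ with the $\mathbb Z^{d_2+d_3}$-component of $w$ under $\mathcal F_W^0$; once the identity $\mathcal F_W^0((\partial')^\beta\partial^\alpha D)\cong C_{\alpha,\beta}x^\alpha k^\beta\mathcal F_W^0(D)$ is in hand (it is the image under $\mathcal F_W^0$ of the corresponding symbol identity, since $\mathrm{op}^0$ intertwines the actions), everything else is the routine transcription of Lemma~\ref{lem:OCM}. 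I expect the main obstacle to be purely notational: handling the modulation $\gamma_z^\Phi(A)=W_z^\Phi A$ (rather than the symmetric $W_{z/2}(\cdot)W_{z/2}$), which shifts where the unimodular phases and the argument translations land, so the clean formula $|V_BA(z,w)|=|\mathcal F_W^0(A\alpha_{z-w/2}(B^\ast))(w)|$ from the $\mathbb R^{2d}$ case must be rederived with the asymmetric convention — but this changes only constants and the precise shift point, not the structure or the convergence of the final integral/sum.
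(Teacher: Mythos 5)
Your treatment of the first inclusion is essentially the paper's own argument: the paper proves $C^{2d_1+1,2d_2+2d_3}(\mathcal H)\hookrightarrow M^{\infty,1}(\mathcal H)$ exactly by transcribing Lemma~\ref{lem:OCM} to the phase space $\Xi$, with the mixed differentiation identity producing the factor $x^\alpha k^\beta$, the Riemann--Lebesgue bound by the trace norm, the Leibniz expansion using $\partial^{\alpha'}(\partial')^{\beta'}A\in\mathcal L(\mathcal H)$ and shift-invariance of the trace norm of the Schwartz window, and the finiteness of $\int_{\mathbb R^{2d_1}}\min_{|\alpha|\le 2d_1+1}|x^\alpha|^{-1}dx$ and $\sum_{k}\min_{|\beta|\le 2d_2+2d_3}|k^\beta|^{-1}$ already established in the symbol-space lemma. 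Your caveat about rederiving the shift point with the asymmetric modulation $\gamma_z^\Phi(A)=W_z^\Phi A$ is exactly the kind of bookkeeping the paper glosses over with ``obtained similarly,'' so that part is fine. (One small caution: your opening sentence suggests transporting the $C^{k,m}$ embedding through $\mathrm{op}^0$, which cannot work directly --- knowing that the symbol of a $C^{k,m}$ operator is $C_b^{k,m}$ is precisely what is being proven --- but the argument you actually carry out does not rely on this.)

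There is, however, a genuine gap in your handling of the second inclusion: you misread it. The statement is $M^{\infty,1}(\Xi)\hookrightarrow L^\infty(\Xi)$, a purely function-space embedding on the phase space $\Xi=\mathbb R^{2d_1}\times\mathbb T^{d_2+d_3}\times\mathbb Z^{d_2+d_3}$ --- the $\Xi$-analogue of Lemma~\ref{lem:embMF} --- and it is needed as the top-left vertical arrow of the summarizing diagram. It is \emph{not} the operator bound $M^{\infty,1}(\mathcal H)\hookrightarrow\mathcal L(\mathcal H)$, which is indeed the preceding lemma (Gr\"ochenig--Strohmer) and plays a different role. By declaring that ``nothing new is needed'' you leave this part unproved. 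The missing content is small --- the paper disposes of it by noting that Toft's argument for $\mathbb R^{2d}$ (\cite[Prop.\ 1.7]{Toft2004}) carries over to $\Xi$, e.g.\ by recovering $f$ pointwise from its STFT and estimating $\|f\|_\infty$ by $\int_\Xi\sup_z|V_gf(z,w)|\,dw$ up to a constant depending on the window --- but it does have to be said, and your proposal as written addresses the wrong statement in its place.
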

\begin{proof} 
The embedding $C^{2d_1 + 1, 2d_2 + 2d_3}(\mathcal H) \hookrightarrow M^{\infty, 1}(\mathcal H)$ is obtained similarly to the embedding $C_b^{2d_1, 2d_2 + 2d_3}(\Xi) \hookrightarrow M^{\infty, 1}(\Xi)$, compare with Lemma \ref{lem:OCM} above. The inclusion $M^{\infty, 1}(\Xi) \hookrightarrow L^\infty(\Xi)$ can be proven just as for $\Xi = \mathbb R^{2d}$, cf.\ \cite[Proposition 1.7]{Toft2004}.
\end{proof}

We can summarize our findings for general phase space $\Xi = G \times \widehat{G}$, $G = \mathbb R^{d_1} \times \mathbb T^{d_2} \times \mathbb Z^{d_3}$, by the following diagram:

\begin{align*}
    \begin{matrix}
L^\infty(\Xi) & & \mathcal L(\mathcal H)\\
\hookuparrow & & \hookuparrow\\
M^{\infty, 1}(\Xi) & \overset{\sim}{\underset{\mathrm{op}^\Phi}{\longrightarrow}} & M^{\infty, 1}(\mathcal H)\\
\hookuparrow & & \hookuparrow\\
C_b^{2d_1+1, 2d_2 + 2d_3}(\Xi) & & C^{2d_1+1, 2d_2 + 2d_2}(\mathcal H)
    \end{matrix}
\end{align*}
Applying now the same reasoning as for $\Xi = \mathbb R^{2d}$, we obtain:
\begin{theorem}
    Let $G = \mathbb R^{d_1} \times \mathbb T^{d_2} \times \mathbb Z^{d_3}$ and set $\Xi = G \times \widehat{G}$. For $\Phi \in \operatorname{Hom}(G)$ we have, with the notation from above: $C^\infty(\mathcal H) = \mathrm{op}^\Phi(C_b^\infty(\Xi))$.
\end{theorem}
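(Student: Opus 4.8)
The plan is to reproduce, over $\Xi = G\times\widehat G$ with $G = \mathbb R^{d_1}\times\mathbb T^{d_2}\times\mathbb Z^{d_3}$, the exact scheme that proved Theorem \ref{thm:CordesWeyl} and Corollary \ref{cor:tau_quant}. The backbone is the commuting square already assembled above: the four continuous embeddings $C^{2d_1+1,2d_2+2d_3}(\mathcal H)\hookrightarrow M^{\infty,1}(\mathcal H)\hookrightarrow\mathcal L(\mathcal H)$ and $C_b^{2d_1+1,2d_2+2d_3}(\Xi)\hookrightarrow M^{\infty,1}(\Xi)\hookrightarrow L^\infty(\Xi)$, together with the fact that $\mathrm{op}^\Phi$ is a topological isomorphism $M^{\infty,1}(\Xi)\to M^{\infty,1}(\mathcal H)$ satisfying the covariance relation $\alpha_z\circ\mathrm{op}^\Phi=\mathrm{op}^\Phi\circ\alpha_z$. (One could instead first pass to $\Phi=0$, using that $\mathcal F_\sigma\circ\mathcal F_W^{\Phi}\circ(\mathcal F_W^{0})^{-1}\circ\mathcal F_\sigma$ is an $\alpha_z$-intertwining automorphism of $M^{\infty,1}(\Xi)$ that therefore preserves the class of phase-space-smooth symbols; but working directly with $\mathrm{op}^\Phi$ costs nothing extra.)

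The first task is the two-sided ``sandwich'' characterization of phase-space smoothness, where throughout ``smooth'' means smooth only in the $\mathbb R^{2d_1}$- and $\mathbb T^{d_2+d_3}$-directions, the discrete $\mathbb Z^{d_2+d_3}$-directions dropping out. On the symbol side, I would check, as in the Euclidean case, that for $f\in\mathcal S'(\Xi)$ the orbit map $z\mapsto\alpha_z(f)$ is $C^\infty$ into $L^\infty(\Xi)$ iff $f\in C_b^\infty(\Xi):=\bigcap_{k,m}C_b^{k,m}(\Xi)$, and likewise $C^\infty$ into the finer space $C_b^{2d_1+1,2d_2+2d_3}(\Xi)$; sandwiching these two equivalent statements between the two symbol-side embeddings forces $z\mapsto\alpha_z(f)\in C^\infty(\Xi;M^{\infty,1}(\Xi))$ iff $f\in C_b^\infty(\Xi)$. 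On the operator side, the same sandwich with $C^{2d_1+1,2d_2+2d_3}(\mathcal H)\hookrightarrow M^{\infty,1}(\mathcal H)\hookrightarrow\mathcal L(\mathcal H)$ yields $z\mapsto\alpha_z(A)\in C^\infty(\Xi;M^{\infty,1}(\mathcal H))$ iff $A\in C^\infty(\mathcal H)$: here the coarse end is just the definition of $C^\infty(\mathcal H)$ (together with the observation, already recorded for $\mathbb R^{2d}$, that iterated phase-space derivatives automatically lie in $C^0(\mathcal H)$), and the fine end follows by bootstrapping from $\partial_j A,\ \partial_j' A\in C^\infty(\mathcal H)$ and the product/quotient-rule calculus for $C^{k,m}(\mathcal H)$, since convergence of the relevant difference quotients in the $C^{2d_1+1,2d_2+2d_3}$-norm is precisely the differentiability of each $(\partial')^\beta\partial^\alpha A$ with $|\alpha|\le 2d_1+1$, $|\beta|\le 2d_2+2d_3$.

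It then remains to transport across $\mathrm{op}^\Phi$. Because $\mathrm{op}^\Phi$ is a bicontinuous bijection $M^{\infty,1}(\Xi)\to M^{\infty,1}(\mathcal H)$ intertwining all $\alpha_z$, and because the phase-space derivative is by definition a limit of difference quotients built from $\alpha_z$, we get $(\partial')^\beta\partial^\alpha\,\mathrm{op}^\Phi(f)=C_{\alpha,\beta}\,\mathrm{op}^\Phi\big((\partial')^\beta\partial^\alpha f\big)$ with a unimodular (here in fact $\pm1$) constant, and hence $\mathrm{op}^\Phi$ maps $\{f:\ z\mapsto\alpha_z(f)\in C^\infty(\Xi;M^{\infty,1}(\Xi))\}$ bijectively onto $\{A:\ z\mapsto\alpha_z(A)\in C^\infty(\Xi;M^{\infty,1}(\mathcal H))\}$. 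Combining this with the two characterizations of the previous paragraph gives $\mathrm{op}^\Phi(C_b^\infty(\Xi))=C^\infty(\mathcal H)$, which is the assertion.

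I expect the genuinely non-formal point to be the ``fine end'' of the operator-side sandwich, i.e.\ showing that mere operator-norm smoothness of $z\mapsto\alpha_z(A)$ of all orders upgrades to smoothness with values in the higher-regularity space $C^{2d_1+1,2d_2+2d_3}(\mathcal H)$ (tightness of the sandwich); over $\mathbb R^{2d}$ this is treated as routine, and the same bootstrap works here, but one must keep careful track of the mixed $\mathbb R\times\mathbb T\times\mathbb Z$ structure so that the derivative counts ($2d_1+1$ in the Euclidean directions, $2d_2+2d_3$ in the toral directions) are exactly those for which the embedding lemmas hold and for which the relevant integral over $\mathbb R^{2d_1}$ and the sum over $\mathbb Z^{d_2+d_3}$ converge. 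Apart from this bookkeeping, every step is a verbatim transcription of the $\mathbb R^{2d}$ argument of Section \ref{sec:1}, now using the $\Phi$-dependent Fourier--Weyl machinery and the embeddings established in the present section.
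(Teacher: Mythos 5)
Your proposal is correct and follows essentially the same route as the paper: the paper assembles exactly the same embedding diagram ($C^{2d_1+1,2d_2+2d_3}\hookrightarrow M^{\infty,1}\hookrightarrow L^\infty/\mathcal L(\mathcal H)$ on both sides, with $\mathrm{op}^\Phi$ an $\alpha_z$-intertwining isomorphism of the $M^{\infty,1}$-spaces) and then invokes "the same reasoning as for $\Xi=\mathbb R^{2d}$", i.e.\ precisely your two-sided sandwich characterization of phase-space smoothness followed by transport across $\mathrm{op}^\Phi$. Even the alternative reduction to $\Phi=0$ that you mention parenthetically is noted in the paper, so there is nothing substantively different to compare.
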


We want to emphasize that the above result is already known for $G = \mathbb T$ and $\Phi = 0$, as it was proven in \cite{Melo1997} by different means.

Again, we obtain as a consequence:
\begin{corollary} 
Let $\Phi, \Phi' \in \operatorname{Hom}(G)$. The map sending $\Phi$-symbols to $\Phi'$-symbols, $\mathcal F_\sigma \circ (\mathcal F_W^{\Phi'})^{-1} \circ \mathcal F_W^\Phi \circ \mathcal F_\sigma$, is a topological isomorphism of $C_b^\infty(\Xi)$.
\end{corollary}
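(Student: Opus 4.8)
The plan is to read the statement off from the Theorem together with the identity $\operatorname{op}^\Phi = (\mathcal F_W^\Phi)^{-1}\circ\mathcal F_\sigma$, with essentially no new computation. Write $T$ for the map in the statement. First I would record that, exactly as on $\Xi = \mathbb R^{2d}$, the symplectic Fourier transform on $\Xi = G\times\widehat G$ is an involution: the inversion formula $\mathcal F_\sigma\mathcal F_\sigma f(z') = \int_\Xi\int_\Xi \sigma(z,w)\sigma(w,z') f(z)\,dz\,dw = f(z')$ holds by Fourier inversion on $\Xi$, using $\sigma(z,w)\sigma(w,z') = \sigma(z-z',w)$, the identification $\widehat\Xi\cong\Xi$ via $z\mapsto\sigma(z,\cdot)$, and the self-dual normalisation of the Haar and Plancherel measures. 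Hence $\mathcal F_\sigma^{-1} = \mathcal F_\sigma$, and combining this with the lemma $\operatorname{op}^\Phi = (\mathcal F_W^\Phi)^{-1}\circ\mathcal F_\sigma$ I would identify $T = (\operatorname{op}^{\Phi'})^{-1}\circ\operatorname{op}^{\Phi}$ — precisely the passage from the $\Phi$-symbol of an operator to its $\Phi'$-symbol — which, by the mapping properties of $\mathcal F_\sigma$ and of $\mathcal F_W^{\bullet}$ collected above, is in any case a topological automorphism of $\mathcal S'(\Xi)$.

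There are then two ways to finish. The direct one is to invoke the Theorem, or rather its proof: the commutative diagram displayed above, together with the covariance $\alpha_z\circ\operatorname{op}^\Phi = \operatorname{op}^\Phi\circ\alpha_z$, the intertwining relation $(\partial')^\beta\partial^\alpha\operatorname{op}^\Phi(f) = \pm\,\operatorname{op}^\Phi((\partial')^\beta\partial^\alpha f)$, and the isometric isomorphism $\operatorname{op}^\Phi\colon M^{\infty,1}(\Xi)\to M^{\infty,1}(\mathcal H)$, shows not merely the set equality $C^\infty(\mathcal H) = \operatorname{op}^\Phi(C_b^\infty(\Xi))$ but that $\operatorname{op}^\Phi$ restricts to a topological isomorphism of the Fréchet spaces $C_b^\infty(\Xi)$ and $C^\infty(\mathcal H)$, for every $\Phi\in\operatorname{Hom}(G)$; applying this to $\Phi$ and to $\Phi'$ and composing, $T = (\operatorname{op}^{\Phi'})^{-1}\circ\operatorname{op}^{\Phi}$ is a composition of topological isomorphisms $C_b^\infty(\Xi)\to C^\infty(\mathcal H)\to C_b^\infty(\Xi)$ and we are done. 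The indirect one works entirely on the middle row of the diagram: it was already noted that $T$ maps $M^{\infty,1}(\Xi)$ isomorphically onto itself, and $T$ commutes with every phase-space shift $\alpha_z$ since each $\operatorname{op}^{\bullet}$ does; using the intrinsic description $C_b^\infty(\Xi) = \{f\in M^{\infty,1}(\Xi)\colon z\mapsto\alpha_z(f)\in C^\infty(\Xi;M^{\infty,1}(\Xi))\}$, a bounded translation-commuting automorphism of $M^{\infty,1}(\Xi)$ automatically preserves $C_b^\infty(\Xi)$, and the same applies to $T^{-1}$ (which is the $\Phi'\to\Phi$ transition map), so that bicontinuity in the Fréchet topology follows from the closed graph theorem.

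The only steps that I expect to need genuine care, rather than bookkeeping, are the two facts that the excerpt establishes only implicitly for general $\Xi$: in the direct route, that the Theorem is a statement about topological isomorphisms of Fréchet spaces and not just about underlying sets (this is what the Section \ref{sec:1} reasoning actually gives, since $\operatorname{op}^w$ is isometric on $M^{\infty,1}$ and intertwines $\alpha_z$ and the partial derivatives, hence transports the $\sup$-norm Fréchet structure of $C_b^\infty$ to that of $C^\infty(\mathcal H)$); and in the indirect route, the intrinsic characterisation of $C_b^\infty(\Xi)$ via phase-space smoothness inside $M^{\infty,1}(\Xi)$, which is the exact analogue — obtained from the embeddings $C_b^{2d_1+1,2d_2+2d_3}(\Xi)\hookrightarrow M^{\infty,1}(\Xi)\hookrightarrow L^\infty(\Xi)$ — of the identity spelled out for $\Xi = \mathbb R^{2d}$ in Section \ref{sec:1}. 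Either way, once one of these is in hand the corollary is immediate.
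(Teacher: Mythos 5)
Your direct route is exactly the paper's argument: the reasoning behind the Theorem shows that $\operatorname{op}^{\Phi}$ and $\operatorname{op}^{\Phi'}$ are topological isomorphisms from $C_b^\infty(\Xi)$ onto $C^\infty(\mathcal H)$ (via the isometric isomorphism on the $M^{\infty,1}$ level together with covariance and the intertwining of derivatives), and the symbol-transition map $(\operatorname{op}^{\Phi'})^{-1}\circ\operatorname{op}^{\Phi}$ is their composition, just as in the $\tau$-quantization corollary of Section \ref{sec:2}. Your identification of the stated composition with this transition map (using that $\mathcal F_\sigma$ is involutive) and your remark that the Theorem must be read as a statement about Fréchet-space isomorphisms rather than mere set equality are both correct and consistent with the paper; the alternative closed-graph route is extra but not needed.
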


We just mention at the end that, indeed, none of the statements we made hinged on the fact that we consider operators on Hilbert spaces. Indeed, the same characterization of Heisenberg-smooth operators holds true for operators acting on modulation spaces $M^p(G)$ instead of $L^2(G)$ (note that $M^2(G) = L^2(G)$). Without going into many details on this, we just mention that the spaces $M^p(G)$ are essentially defined as the space of all functions $f$ on $G$ such that the STFT $V_g(f)$ is $p$-integrable on $\Xi$. Then, every symbol in $M^{\infty, 1}(\Xi)$ yields a Kohn-Nirenberg pseudodifferential operator which even acts boundedly on $M^p(G)$ with $\| \mathrm{op}(f)\|_{M^p(G)\to M^p(G)} \lesssim \| f\|_{M^{\infty, 1}(\Xi)}$ (cf.\ \cite[Theorem 5.2]{Grochenig_Strohmer2007}). Since $M^{\infty, 1}(\Xi)$ is invariant under the change of quantization, as mentioned above, the statement holds for any quantization $\mathrm{op}^\Phi$. Defining the class $M^{\infty, 1}(M^p(G))$ of operators on $M^p(G)$ analogously to $M^{\infty, 1}(\mathcal H)$ (possibly with replacing the Hilbert-Schmidt inner product by the bilinear pairing between nuclear and bounded operators), one therefore has the embedding $M^{\infty, 1}(M^p(G)) \hookrightarrow \mathcal L(M^p(G))$. On the other hand, since the operators $W_z^\Phi$ leave the modulation spaces $M^p(G)$ invariant, we can define the classes $C^{k, m}(M^p(G))$ of differentiable operators analogously. Then, the embedding $C^{2d_1 + 1, 2d_2 + 2d_3}(M^p(G)) \hookrightarrow M^{\infty, 1}(M^p(G))$ can also be proven analogously (replacing the Riemann-Lebesgue Lemma for $\mathcal F_W$ by the estimate $|\tr(AN)| \leq \| A\|_{op} \| N\|_{\mathcal N}$, where $\| \cdot \|_{\mathcal N}$ is the nuclear norm). Therefore, we just mention that we obtain analogously the following result. Here, we stay within the reflexive range $1 < p < \infty$, so we have no additional trouble from the lack of reflexivity in the end-point cases, which would, for example, destroy the duality between the nuclear and the bounded operators.
\begin{theorem}
    Let $G = \mathbb R^{d_1} \times \mathbb T^{d_2} \times \mathbb Z^{d_3}$ and set $\Xi = G \times \widehat{G}$. Let $1 < p < \infty$ and $\Phi \in \operatorname{Hom}(\Xi)$. Then, we have: $C^\infty(M^p(G)) = \mathrm{op}^\Phi(C_b^\infty(\Xi))$.
\end{theorem}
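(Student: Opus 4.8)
The strategy is to rebuild, for $M^p(G)$ in place of $L^2(G)$, the four-rung commutative diagram of Section~\ref{sec:3} --- with $L^\infty(\Xi)\hookleftarrow M^{\infty,1}(\Xi)\hookleftarrow C_b^{2d_1+1,2d_2+2d_3}(\Xi)$ on the symbol side and $\mathcal L(M^p(G))\hookleftarrow M^{\infty,1}(M^p(G))\hookleftarrow C^{2d_1+1,2d_2+2d_3}(M^p(G))$ on the operator side, linked by an isometric isomorphism $\operatorname{op}^0\colon M^{\infty,1}(\Xi)\to M^{\infty,1}(M^p(G))$ --- and then to run the squeezing argument of Section~\ref{sec:1} unchanged. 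As a preliminary reduction it suffices to treat the Kohn--Nirenberg case $\Phi=0$: the change-of-quantization map $\mathcal F_\sigma\circ(\mathcal F_W^{\Phi})^{-1}\circ\mathcal F_W^{0}\circ\mathcal F_\sigma$ is a topological isomorphism of both $M^{\infty,1}(\Xi)$ (shown above) and $C_b^\infty(\Xi)$ (the Corollary above), while $C^\infty(M^p(G))$ is defined intrinsically through the $\Phi$-independent action $\alpha_z$, so the general $\Phi$ follows from $\Phi=0$ exactly as Corollary~\ref{cor:tau_quant} followed from Theorem~\ref{thm:CordesWeyl}. Everything in Section~\ref{sec:3} that is pure phase-space arithmetic --- the CCR identity~\eqref{eq:ccr}, covariance $\alpha_z(\operatorname{op}^0 f)=\operatorname{op}^0(\alpha_z f)$ and $\partial^\alpha\operatorname{op}^0(f)=(-1)^{|\alpha|}\operatorname{op}^0(\partial^\alpha f)$, the product and quotient rules, spectral invariance of $C^{k,m}(M^p(G))$, and density of $C^\infty(M^p(G))$ in $C^0(M^p(G))$ via $f_n\ast A=\int_\Xi f_n(z)\alpha_z(A)\,dz$ --- transfers verbatim, since $(W_z^0)_{z\in\Xi}$ leaves $M^p(G)$ invariant with $\sup_z\|W_z^0\|_{\mathcal L(M^p(G))}<\infty$.

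For the operator modulation space I would stay in the reflexive range $1<p<\infty$, where $\mathcal N(M^p(G))$ and $\mathcal L(M^p(G))$ are in duality via $(A,N)\mapsto\operatorname{tr}(AN)$; this duality is the substitute for the Hilbert--Schmidt pairing of Section~\ref{sec:1}. The key structural point is that a Schwartz operator $B\in\mathcal S(\mathcal H)$, i.e.\ a continuous map $\mathcal S'(G)\to\mathcal S(G)$, is nuclear on \emph{every} $M^p(G)$, because the embedding $\mathcal S(G)\hookrightarrow M^p(G)$ is nuclear, and the Schwartz-operator topology does not depend on $p$. Fixing $0\neq B\in\mathcal S(\mathcal H)$ one defines $V_B A(z,w)=\langle A,\gamma_w^0\alpha_z(B)\rangle$ through the trace pairing (well defined and continuous for $A\in\mathcal S'(\mathcal H)$, hence for $A\in\mathcal L(M^p(G))\subset\mathcal S'(\mathcal H)$) and sets $M^{\infty,1}(M^p(G))=\{A:V_BA\in L^{\infty,1}(\Xi)\}$. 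The computation from the proof of Lemma~\ref{lem:OCM} and its $\Phi$-analogue uses only the CCR relation and the intertwining identities of $\mathcal F_W^0$, all available here; it shows $|V_B A(z,w)|$ equals, up to a unimodular factor, $|\mathcal F_W^0(A\,\alpha_{z'}(B^\ast))(w)|$ for a point $z'$ depending affinely on $(z,w)$, which yields window-independence of the norm and, composed with the symbol isomorphism, the isometric isomorphism $\operatorname{op}^0\colon M^{\infty,1}(\Xi)\overset{\sim}{\longrightarrow}M^{\infty,1}(M^p(G))$. The top rung $M^{\infty,1}(M^p(G))\hookrightarrow\mathcal L(M^p(G))$ is then Gr\"ochenig--Strohmer's bound $\|\operatorname{op}^0(f)\|_{M^p(G)\to M^p(G)}\lesssim\|f\|_{M^{\infty,1}(\Xi)}$ transported through this isomorphism, and $M^{\infty,1}(\Xi)\hookrightarrow L^\infty(\Xi)$ is the general-$\Xi$ form of Lemma~\ref{lem:embMF}.

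The only genuinely new estimate is the bottom rung $C^{2d_1+1,2d_2+2d_3}(M^p(G))\hookrightarrow M^{\infty,1}(M^p(G))$, proved by copying Lemma~\ref{lem:OCM} with the Riemann--Lebesgue inequality $\|\mathcal F_W(T)\|_\infty\le\|T\|_{\mathcal T^1}$ replaced by $\|\mathcal F_W^0(T)\|_\infty\le C\,\|T\|_{\mathcal N}$, an instance of $|\operatorname{tr}(AN)|\le\|A\|_{op}\|N\|_{\mathcal N}$ using $\sup_w\|W_w^0\|_{\mathcal L(M^p(G))}<\infty$. Concretely, the frequency relation $\mathcal F_W^0((\partial')^\beta\partial^\alpha D)(w)\cong C_{\alpha,\beta}\,x^\alpha k^\beta\,\mathcal F_W^0(D)(w)$ gives $|V_B A(z,w)|\cong|x^\alpha k^\beta|^{-1}\,|\mathcal F_W^0((\partial')^\beta\partial^\alpha(A\,\alpha_{z'}(B^\ast)))(w)|$; expanding $(\partial')^\beta\partial^\alpha(A\,\alpha_{z'}(B^\ast))$ by Leibniz into finitely many $((\partial')^{\beta'}\partial^{\alpha'}A)\,\alpha_{z'}((\partial')^{\beta''}\partial^{\alpha''}B^\ast)$ with $|\alpha'|+|\alpha''|\le 2d_1+1$, $|\beta'|+|\beta''|\le 2d_2+2d_3$, the first factor is bounded on $M^p(G)$ by hypothesis and the second is an $\alpha_{z'}$-shift of a fixed Schwartz operator, hence nuclear with $\|\alpha_{z'}(N)\|_{\mathcal N}\lesssim\|N\|_{\mathcal N}$ uniformly in $z'$; so $\|(\partial')^\beta\partial^\alpha(A\,\alpha_{z'}(B^\ast))\|_{\mathcal N}\lesssim\|A\|_{C^{2d_1+1,2d_2+2d_3}}$ uniformly in $z,w$. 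Minimizing over the admissible $\alpha,\beta$ and invoking the already-established convergence of $\int_{\mathbb R^{2d_1}}\min_{|\alpha|\le 2d_1+1}|x^\alpha|^{-1}dx$ and $\sum_{k\in\mathbb Z^{d_2+d_3}}\min_{|\beta|\le 2d_2+2d_3}|k^\beta|^{-1}$ finishes the embedding. With the ladder assembled the theorem follows by the soft argument of Section~\ref{sec:1}: $A\in C^\infty(M^p(G))$ iff $z\mapsto\alpha_z(A)$ is $C^\infty$ into $\mathcal L(M^p(G))$ iff into $C^{2d_1+1,2d_2+2d_3}(M^p(G))$ iff into $M^{\infty,1}(M^p(G))$; pushing through $(\operatorname{op}^0)^{-1}$ with covariance, this is $z\mapsto\alpha_z(f)$ being $C^\infty$ into $M^{\infty,1}(\Xi)$, and squeezing between $C_b^{2d_1+1,2d_2+2d_3}(\Xi)$ and $L^\infty(\Xi)$ this means exactly $f\in C_b^\infty(\Xi)$.

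The step I expect to be the main obstacle is not any single estimate but the functional-analytic bookkeeping of operator time--frequency analysis on a non-Hilbertian Banach space: that $V_B A$ is a well-defined continuous function on $\Xi$ for every $A\in\mathcal S'(\mathcal H)$, that $M^{\infty,1}(M^p(G))$ is complete, and that two Schwartz windows give equivalent norms. All three rely on the clean duality between nuclear and bounded operators on $M^p(G)$, which is precisely why one restricts to $1<p<\infty$; at the endpoints this duality fails and the argument as stated breaks down. Modulo this, the proof is a line-by-line transcription of Sections~\ref{sec:1}--\ref{sec:3}.
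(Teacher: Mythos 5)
Your proposal follows exactly the route the paper itself takes (and only sketches) for this theorem: reduce to $\Phi=0$ via the invariance of $M^{\infty,1}(\Xi)$ under change of quantization, replace the Hilbert--Schmidt pairing by the nuclear--bounded trace duality (the reason for restricting to $1<p<\infty$), obtain the upper rung $M^{\infty,1}(M^p(G))\hookrightarrow\mathcal L(M^p(G))$ from the Gr\"ochenig--Strohmer bound, prove the lower rung $C^{2d_1+1,2d_2+2d_3}(M^p(G))\hookrightarrow M^{\infty,1}(M^p(G))$ by substituting $|\tr(AN)|\le\|A\|_{op}\|N\|_{\mathcal N}$ for the Riemann--Lebesgue lemma, and then run the same squeezing argument. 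It is correct and essentially a more detailed transcription of the argument the paper indicates.
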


\section{Schatten-class versions of the Calder\'{o}n-Vaillancourt and Cordes theorems}\label{sec:5}

Let us return to the case of $G = \mathbb R^d$ and Weyl quantization. Results analogous to those of this section can again be obtained in the more general setting $G = \mathbb R^{d_1} \times \mathbb T^{d_2} \times \mathbb Z^{d_3}$ and $\Phi \in \operatorname{Hom}(G)$ with the same methods, but we leave this to the interested reader. We will now discuss how to obtain Schatten-class versions of the result on Heisenberg-smooth operators. We have to adapt our methods and step away from modulation spaces, as one of the inclusions presented before seems unavailable. Indeed, while we still have the inclusion $M^{p, 1}(\mathcal H) \hookrightarrow \mathcal T^p(\mathcal H)$ for each $p \in [1, \infty)$ (this follows from $M^{\infty, 1}(\mathcal H) \hookrightarrow \mathcal L(\mathcal H)$ and the well-known inclusion $M^1(\mathcal H) \hookrightarrow \mathcal T^1(\mathcal H)$, combined with complex interpolation), it is not clear to us if one can obtain inclusions of the form $W^{k,p}(\mathcal H) \hookrightarrow M^{p, 1}(\mathcal H)$ (with $W^{k,p}(\mathcal H)$ an appropriate Sobolev space, to be discussed below). Indeed, the proof of $C^{2d+1}(\mathcal H) \hookrightarrow M^{\infty, 1}(\mathcal H)$ hinged on the Riemann-Lebesgue Lemma for the Fourier transform, $\| \mathcal F_W(A)\|_\infty \leq \| A\|_{\mathcal T^1}$, and it is well-known that no estimates of the form $\| \mathcal F_W(A)\|_{L^p} \lesssim \| A\|_{T^q}$ are available for $p < 2$ and $q$ the conjugate exponent. Therefore, we will instead make use of estimates which are, in general, a little rougher than the $M^{\infty, 1}$-embeddings, namely Calder\'{o}n-Vaillancourt estimates.

Let us recall the notion of convolution of operators and functions, as it is used in quantum harmonic analysis, cf.\ \cite{werner84, keyl_kiukas_werner16, luef_eirik2018, Fulsche_Galke2023}. Given an operator $A \in \mathcal T^1(\mathcal H)$ and $f \in L^1(\mathbb R^{2d})$ we set
\begin{align*}
    f \ast A := \int_{\mathbb R^{2d}} f(z) \alpha_z(A) ~dz.
\end{align*}
Then, $f \ast A \in \mathcal T^1(\mathcal H)$ with $\| f\ast A\|_{\mathcal T^1} \leq \| f\|_{L^1} \| A\|_{\mathcal T^1}$. This convolution extends to $f \in L^\infty(\mathbb R^{2d})$ with $f \ast A \in \mathcal L(\mathcal H)$ and $\| f \ast A\|_{op} \leq \| f\|_\infty \| A\|_{\mathcal T^1}$. 
More generally, we obtain by complex interpolation that $\| f \ast A\|_{\mathcal T^p} \leq \| f\|_{L^p} \| A\|_{\mathcal T^1}$. This convolution extends to a convolution between $\mathcal S'(\mathbb R^{2d})$ and $\mathcal S(\mathcal H)$: If $f \in \mathcal S'(\mathbb R^{2d})$ and $A \in \mathcal S(\mathcal H)$, then $f \ast A$ is well-defined as an object in $\mathcal S'(\mathcal H)$. 

Analogously, one can define the convolution between operators. For $A, B \in \mathcal T^1(\mathcal H)$ we define the function $A \ast B(z)$, $z \in \RR^{2d}$, by $A \ast B(z) = \tr(A \alpha_z \beta_-(B))$. Here, $\beta_-(B) = UBU$, where $U$ is the parity operator on $\mathcal H = L^2(\mathbb R^d)$: $U\varphi(x) = \varphi(-x)$. One obtains that $A \ast B \in L^1(\RR^{d})$ with $\| A \ast B\|_{L^1} \leq (2\pi)^d \| A\|_{\mathcal T^1} \| B\|_{\mathcal T^1}$ and $\int_{\RR^{2d}} A \ast B(z) ~dz = (2\pi)^d \tr(A) \tr(B)$. 
Again, when $A \in \mathcal L(\mathcal H)$ and $B \in \mathcal T^1(\mathcal H)$, then we analogously have (with $A \ast B$ defined by the same formula) $A \ast B \in L^\infty(\RR^{2d})$ with $\| A \ast B\|_\infty \leq \| A\|_{op} \| B\|_{\mathcal T^1}$.
By complex interpolation, we again see $\| A \ast B\|_p \leq (2\pi)^{d/p}\| A\|_{\mathcal T^p} \| B\|_{\mathcal T^1}$. These convolutions behave nicely with the Fourier transforms of quantum harmonic analysis. 
For example, $\mathcal F_\sigma(A \ast B) = \mathcal F_W(A) \cdot \mathcal F_W(B)$ and $\mathcal F_W(f \ast A) = \mathcal F_\sigma(f) \cdot \mathcal F_W(A)$. As a consequence, we obtain $\operatorname{op}^w(f) \ast \operatorname{op}^w(g) = f \ast g$ and $\operatorname{op}^w(f)*g=f*\operatorname{op}^w(g) = \operatorname{op}^w(f*g)$.

In the following, we will make use of the fact that $K = \operatorname{op}^w((1-\Delta)^{-d-1}\delta_0) \in \mathcal T^1(\mathcal H)$, where $\Delta$ is the Laplacian on $\mathbb R^{2n}$. While this can certainly be checked concretely, we just give some abstract reason for this: Since the Fourier transform of $(1-\Delta)^{-d-1}\delta_0$ is (up to constants) given by $(1 + |x|^2 + |\xi|^2)^{-d-1} \in L^1(\Xi)$, the results in \cite{Buzano_Toft2010} can be utilized to verify that this is indeed true. 
\begin{remark}
    The precise nature of these arguments is not particularly important. One could also argue that $\delta_0 \in H^{-d-1}(\mathbb R^{2d})$ such that $(1-\Delta)^{-m} \delta_0 \in H^{2d+1}(\mathbb R^{2d})$ for $m$ large enough. By a classical result of Daubechies, it is $\operatorname{op}^w(H^{2d+1}(\mathbb R^{2d})) \subset \mathcal T^1(\mathcal H)$. Cordes, in his original work \cite{Cordes1979}, used instead the fact that
    \begin{align*}
        \operatorname{op}^w((1-\frac{\partial}{\partial x_1})^{-s} \dots (1- \frac{\partial}{\partial x_d})^{-s}(1 - \frac{\partial}{\partial \xi_1})^{-s}(1 - \frac{\partial}{\partial \xi_d})^{-s} \delta_0) \in \mathcal T^1(\mathcal H)
    \end{align*}
    for $s$ sufficiently large, which he verified by hand. Any of these arguments are sufficient to prove the smoothness result that we are aiming for. Indeed, any of these arguments yield different forms of the Calder\'{o}n-Vaillancourt and reverse Calder\'{o}n-Vaillancourt theorems that we discuss below. Since none of these estimates yields particularly sharp results, the outcome is not particularly important: Sharper results would use fractional powers of the differential operators, so we would have to discuss fractional powers of phase space differentials of operators - a topic which we do not want to touch here.
\end{remark}

For $f\in \S'(\RR^{2d})$ and $P$ the differential operator $P = (1-\Delta)^d$, it follows from the basic properties of quantum harmonic analysis that
\begin{align}
    \opw(f) = \opw(f) \ast \delta_0 = f * \opw(\delta_0) = P(f) * K.
\end{align}
Applying $\|P(f)\|_{L^p}\le \|f\|_{W^{2d,p}}$ and the QHA version of Young's convolution inequality proves a version of the Calder\'on-Vaillancourt estimates for Schatten-classes:
\begin{theorem}
    Let $p\in [1,\infty)$ and let $f\in W^{2d,p}(\RR^{2d})$. Set $c=(2\pi)^{d/p}\|K\|_{\mc T^1}$, then 
    \begin{equation}
        \|\opw(f)\|_{\mc T^p} \le c\,\|f\|_{W^{k,p}}.
    \end{equation}
\end{theorem}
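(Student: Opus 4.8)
The estimate follows by combining the displayed identity $\opw(f) = P(f) \ast K$ (with $P = (1-\Delta)^d$ and $\Delta$ the phase-space Laplacian) with the QHA version of Young's convolution inequality, so the plan is just to assemble these pieces and dispose of the regularity bookkeeping.

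First, fix $f \in W^{2d,p}(\RR^{2d})$ with $1 \le p < \infty$. Then $P(f) = (1-\Delta)^d f$ lies in $L^p(\RR^{2d})$: expanding $(1-\Delta)^d$ as a finite constant-coefficient combination of the derivatives $\partial^\alpha$ with $|\alpha| \le 2d$ and applying the triangle inequality gives $\|P(f)\|_{L^p} \le \|f\|_{W^{2d,p}}$, with the convention that the $W^{2d,p}$-norm is the Bessel-potential norm $\|(1-\Delta)^d f\|_{L^p}$ (or up to a harmless constant for the ordinary Sobolev norm). Hence $P(f) \ast K$ is an honest element of $\mathcal T^p(\mathcal H)$, and the function--operator Young inequality recorded earlier gives $\|P(f) \ast K\|_{\mathcal T^p} \le (2\pi)^{d/p}\|P(f)\|_{L^p}\|K\|_{\mathcal T^1}$.

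It remains to see that $\opw(f) = P(f) \ast K$ actually holds at this regularity. On $\mathcal S(\RR^{2d})$ it is a consequence of the QHA convolution theorems proved above: $\opw(f) = \opw(\delta_0 \ast f) = f \ast \opw(\delta_0)$, and since $K = \opw((1-\Delta)^{-d}\delta_0)$ together with $\partial^\alpha \opw(g) = (-1)^{|\alpha|}\opw(\partial^\alpha g)$ shows that the even-order operator $P$ sends $K$ to $\opw(\delta_0)$, moving $P$ back onto the symbol through the convolution (via $\partial_j(g \ast A) = (\partial_j g) \ast A$) gives $f \ast \opw(\delta_0) = P(f) \ast K$. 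For general $f \in W^{2d,p}$, approximate by $f_n \in \mathcal S(\RR^{2d})$ with $f_n \to f$ in $W^{2d,p}$: the left-hand sides converge in $\mathcal S'(\mathcal H)$ by continuity of $\opw$ on $\mathcal S'$, the right-hand sides converge in $\mathcal T^p(\mathcal H)$ --- hence in $\mathcal S'(\mathcal H)$ --- by the estimate just established, and uniqueness of limits transfers the identity to $f$. Chaining the three displays produces $\|\opw(f)\|_{\mathcal T^p} \le c\,\|f\|_{W^{2d,p}}$ with $c = (2\pi)^{d/p}\|K\|_{\mathcal T^1}$.

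There is no real obstacle in the argument: the single genuinely substantial input --- that $K = \opw((1-\Delta)^{-d}\delta_0)$ is trace class --- was already supplied by the preceding remark through the Besov/Bessel-potential mapping properties, and everything else is soft. The only points worth some care are the density-and-continuity step bridging $\mathcal S(\RR^{2d})$ and $W^{2d,p}(\RR^{2d})$ (which needs $p < \infty$), and keeping the $(2\pi)$-factors of $\mathcal F_\sigma$ and of the various convolutions consistent so that the constant comes out exactly as $(2\pi)^{d/p}\|K\|_{\mathcal T^1}$ rather than merely proportional to $\|K\|_{\mathcal T^1}$.
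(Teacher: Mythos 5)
Your proposal is correct and follows essentially the same route as the paper: the identity $\opw(f)=\opw(f)\ast\delta_0=f\ast\opw(\delta_0)=P(f)\ast K$ with $P=(1-\Delta)^d$, the bound $\|P(f)\|_{L^p}\le\|f\|_{W^{2d,p}}$, and the QHA Young inequality $\|g\ast K\|_{\mathcal T^p}\lesssim\|g\|_{L^p}\|K\|_{\mathcal T^1}$. The extra care you take with the density/limiting step and with interpreting the statement's $\|f\|_{W^{k,p}}$ as $\|f\|_{W^{2d,p}}$ only fills in details the paper leaves implicit.
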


A result of this type was proved first by Arsu in \cite{Arsu2008} by generalizing a proof of the Calder\'on-Vaillancourt theorem due to Cordes.
Essentially his proof uses the same techniques that we used, just without explicitly using the language of quantum harmonic analysis. We want to emphasize that a collection of related estimates was recently obtained in \cite{Lafleche2024}.

The above theorem implies that the pseudodifferential operators with symbols in the space $W^{\infty,p}(\RR^{2d})$ satisfy a Schatten-$p$ analogue of Heisenberg-smoothness.
To formalize this and to state a kind of converse inequality, we introduce \emph{quantum Sobolev spaces} $W^{k,p}(\H)$, which are an operator version of the Sobolev spaces on phases.
These are the Banach spaces of operators $A\in \mc T^p(\H)$ such that $z\mapsto \alpha_z(A)$ is a $C^k$ mapping from $\RR^{2d}$ to $\mc T^p(\H)$.
For $\alpha\in\NN_0^{2d}$ we again get a linear contraction \begin{equation}
    \partial^\alpha: W^{k,p}(\H)\to W^{k-|\alpha|,p}(\H)
\end{equation}
which allows us to define the norm on $W^{k,p}(\H)$ just as the Sobolev norms on $W^{k,p}(\RR^{2d})$ are defined: 
\begin{align*}
    \| A\|_{W^{k,p}} = \sum_{|\alpha| \leq k} \| \partial^\alpha A\|_{\mathcal T^p}.
\end{align*}
Further, we let $W^{\infty, p}(\mathcal H) = \cap_{k=0}^\infty W^{k,p}(\mathcal H)$. Again, we refer to \cite{Lafleche2024} for many results related to quantum Sobolev spaces.

We mention the following easy-to-check properties to emphasize how natural these spaces are in the context of quantum harmonic analysis
\begin{lemma} Let $ p \in [1, \infty)$ and $k \in \mathbb N_0$. 
    \begin{enumerate}
        \item $W^{k,p}(\mathcal H)$ is a Banach space.
        \item $W^{\infty, p}(\mathcal H)$ is a Fr\'{e}chet space.
        \item $\mc S(\H)\hookrightarrow W^{k,p}(\H)$ with dense image.
        \item If $m \geq k$, then $W^{k,p}(\mathcal H) \hookrightarrow W^{m,p}(\mathcal H)$ densely.
        \item If $p' \in (p, \infty)$, then $W^{k,p}(\mathcal H) \hookrightarrow W^{k, p'}(\mathcal H)$.
        \item $W^{k,p}(\mathcal H)$ is a left- and right-$C^{k}(\mathcal H)$-module. It is $\| AB\|_{W^{k,p}} \leq \| A\|_{C^{k}} \| B\|_{W^{k,p}}$ and $\| BA\|_{W^{k,p}} \leq \| B\|_{W^{k,p}} \| A\|_{C^{k}}$.
    \end{enumerate}
\end{lemma}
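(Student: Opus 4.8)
The six items are all obtained by transplanting elementary Sobolev-space arguments to the operator side, the only genuinely new ingredient being a good family of regularizing Schwartz operators. I would organize the proof around the isometry
\[
W^{k,p}(\mathcal H)\ni A\longmapsto\bigl(z\mapsto\alpha_z(A)\bigr)\in C_b^k\bigl(\mathbb R^{2d};\mathcal T^p(\mathcal H)\bigr),
\]
where $C_b^k$ denotes the Banach space of bounded $C^k$ maps with bounded derivatives up to order $k$. This map is well-defined because $\partial^\alpha$ commutes with every $\alpha_z$ and $\alpha_z$ is isometric on $\mathcal T^p$, so $z\mapsto\alpha_z(\partial^\alpha A)$ has constant $\mathcal T^p$-norm $\|\partial^\alpha A\|_{\mathcal T^p}$; it is an isometry onto the closed subspace of $\alpha$-covariant maps (those with $f(z)=\alpha_z(f(0))$). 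Hence $W^{k,p}(\mathcal H)$ is a closed subspace of a Banach space, giving (1). For (2), $W^{\infty,p}(\mathcal H)$ carries the countable point-separating family of norms $\|\cdot\|_{W^{k,p}}$, $k\in\mathbb N_0$, and any Cauchy sequence converges in each $W^{k,p}$ by (1) to a common limit (limits in $\mathcal T^p$ being unique), lying in $\bigcap_k W^{k,p}=W^{\infty,p}$; so $W^{\infty,p}(\mathcal H)$ is Fréchet. Item (5) is immediate from the contractive inclusion $\mathcal T^p(\mathcal H)\hookrightarrow\mathcal T^{p'}(\mathcal H)$ for $p\le p'$: a $C^k$ map into $\mathcal T^p$ is a fortiori $C^k$ into $\mathcal T^{p'}$, with $\|A\|_{W^{k,p'}}\le\|A\|_{W^{k,p}}$. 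The bounded inclusion in (4) between the orders $k$ and $m$ is read off the definition of the norms (fewer derivatives are controlled by more), and its density is a consequence of (3).

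For (6) I would invoke the product rule for phase-space derivatives together with the ideal estimate $\|AC\|_{\mathcal T^p}\le\|A\|_{op}\|C\|_{\mathcal T^p}$. If $A\in C^k(\mathcal H)$ and $B\in W^{k,p}(\mathcal H)$, then $z\mapsto\alpha_z(AB)=\alpha_z(A)\alpha_z(B)$ is a product of a $C^k$ map into $\mathcal L(\mathcal H)$ and a $C^k$ map into $\mathcal T^p(\mathcal H)$, hence (Leibniz rule in Banach spaces, with the continuous multiplication $\mathcal L(\mathcal H)\times\mathcal T^p\to\mathcal T^p$) a $C^k$ map into $\mathcal T^p$; thus $AB\in W^{k,p}(\mathcal H)$ with $\partial^\gamma(AB)=\sum_{\alpha+\beta=\gamma}\binom{\gamma}{\alpha}(\partial^\alpha A)(\partial^\beta B)$, and the stated norm bounds follow by summing over $|\gamma|\le k$. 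The right-module statement is symmetric.

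The crux is the density in (3). The inclusion $\mathcal S(\mathcal H)\hookrightarrow W^{k,p}(\mathcal H)$ is clear: Schwartz operators are trace class, $z\mapsto\alpha_z(B)$ is smooth into $\mathcal S(\mathcal H)\subseteq\mathcal T^p(\mathcal H)$ (because $\mathcal F_W(\alpha_z(B))=\gamma_z(\mathcal F_W(B))$ depends smoothly on $z$ in $\mathcal S(\mathbb R^{2d})$), and continuity follows from $\mathcal F_W(\partial^\alpha B)=i^{|\alpha|}w^\alpha\mathcal F_W(B)$ together with the isomorphism $\mathcal F_W\colon\mathcal S(\mathcal H)\to\mathcal S(\mathbb R^{2d})$. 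For density I would exhibit a two-sided regularization. Fix $0\le\phi\in\mathcal S(\mathbb R^{2d})$ even with $\int\phi=1$, set $\phi_n(w)=n^{2d}\phi(nw)$ and $B_n:=\mathcal F_W^{-1}(\phi_n)\in\mathcal S(\mathcal H)$. From $\mathcal F_W^{-1}(f)=\int f(w)W_w\,dw$ and $\|\mathcal F_W^{-1}(f)\|_{op}\le\|f\|_{L^1}$ one gets $\|B_n\|_{op}\le1$, $B_n\to I$ and $B_n^\ast\to I$ strongly (strong continuity of $w\mapsto W_w\psi$ and $\phi_n\to\delta_0$), and $\|\partial^\alpha B_n\|_{op}\le\|w^\alpha\phi_n\|_{L^1}=n^{-|\alpha|}\int|u^\alpha|\phi(u)\,du\to0$ for $|\alpha|\ge1$. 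Given $A\in W^{k,p}(\mathcal H)$, put $A_n:=B_nAB_n$: since $B_n$ maps $\mathcal S'(\mathbb R^d)$ into $\mathcal S(\mathbb R^d)$, $A$ maps $L^2$ into $L^2$, and $\mathcal S(\mathbb R^d)\subseteq L^2\subseteq\mathcal S'(\mathbb R^d)$, the composite $B_nAB_n$ maps $\mathcal S'(\mathbb R^d)$ continuously into $\mathcal S(\mathbb R^d)$, so $A_n\in\mathcal S(\mathcal H)$. Applying the product rule to the three factors (legitimate since $B_n\in\mathcal S(\mathcal H)\subseteq W^{\infty,1}(\mathcal H)\subseteq C^\infty(\mathcal H)$ and $W^{k,p}(\mathcal H)\hookrightarrow C^k(\mathcal H)$), $\partial^\gamma A_n$ expands into a finite sum of terms $(\partial^{\beta_1}B_n)(\partial^{\beta_2}A)(\partial^{\beta_3}B_n)$. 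The term with $\beta_1=\beta_3=0$ is $B_n(\partial^\gamma A)B_n\to\partial^\gamma A$ in $\mathcal T^p$ (sandwiching a fixed $\mathcal T^p$-operator between the uniformly bounded, strongly convergent pair $B_n,B_n^\ast$, using $p<\infty$); every remaining term carries a factor $\partial^{\beta_i}B_n$ with $|\beta_i|\ge1$, of operator norm $O(n^{-1})$, while the other two factors stay bounded in $\mathcal T^p$ and operator norm respectively, so it tends to $0$ in $\mathcal T^p$. Summing over $|\gamma|\le k$ gives $A_n\to A$ in $W^{k,p}(\mathcal H)$.

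I expect the two-sided regularization step just described to be the only real obstacle: one must smooth the operator (so that $B_nAB_n$ genuinely lands in $\mathcal S(\mathcal H)$) while keeping all phase-space derivatives under control, and — unlike the scalar case, where a mollifier plus a spatial cutoff do the job — there is no operator analogue of a cutoff that respects the $C^k(\mathcal H)$-structure; conjugating by $B_n$ is exactly what reconciles the two demands. Everything else is bookkeeping with the product rule and the mapping properties of $\mathcal F_W$, $\alpha_z$, and the Schatten ideals recorded above.
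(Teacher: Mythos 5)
The paper does not actually prove this lemma --- it is stated as a list of ``easy-to-check properties'' --- so there is no argument of the authors' to compare against; your write-up supplies the missing details, and it is correct. Items (1), (2), (5) and the inclusion part of (4) are indeed routine once one identifies $W^{k,p}(\mathcal H)$ with the closed subspace of covariant maps in $C^k_b(\mathbb R^{2d};\mathcal T^p(\mathcal H))$, and (6) is the Banach-space Leibniz rule combined with the ideal estimate $\|AC\|_{\mathcal T^p}\le\|A\|_{op}\|C\|_{\mathcal T^p}$. The one genuinely substantive point is the density in (3), and your two-sided regularization $A_n=B_nAB_n$ with $B_n=\mathcal F_W^{-1}(\phi_n)$ is a sound way to do it: the estimates $\|B_n\|_{op}\le\|\phi_n\|_{L^1}=1$, $\|\partial^\alpha B_n\|_{op}\le\|w^\alpha\phi_n\|_{L^1}=O(n^{-|\alpha|})$, the fact that $B_nAB_n$ maps $\mathcal S'(\mathbb R^d)$ into $\mathcal S(\mathbb R^d)$, and the convergence $B_nTB_n\to T$ in $\mathcal T^p$ ($p<\infty$) for fixed $T\in\mathcal T^p$ under strong convergence of $B_n$ and $B_n^\ast$ are all correct and use only facts the paper records ($\mathcal F_W(\partial^\alpha D)=i^{|\alpha|}w^\alpha\mathcal F_W(D)$, $\|\mathcal F_W^{-1}(f)\|_{op}\le\|f\|_{L^1}$, $\mathcal F_W:\mathcal S(\mathcal H)\to\mathcal S(\mathbb R^{2d})$). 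Your closing remark is also apt: the paper's own regularization device $f_n\ast A$ (used for density of $C^\infty(\mathcal H)$ in $C^0(\mathcal H)$) only produces smoothness, not Schwartz-type decay, so the sandwich is exactly the additional ingredient needed here.

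Two small caveats, both really inherited from the statement rather than flaws of your argument: in (4) the inclusion as printed goes the wrong way (for $m\ge k$ it is $W^{m,p}(\mathcal H)\hookrightarrow W^{k,p}(\mathcal H)$ that holds, densely by (3)), which you silently and correctly read in the right direction; and in (6), with the norms $\|A\|_{C^k}=\max_{|\alpha|\le k}\|\partial^\alpha A\|_{op}$ and $\|B\|_{W^{k,p}}=\sum_{|\alpha|\le k}\|\partial^\alpha B\|_{\mathcal T^p}$, the Leibniz expansion yields the module bound only up to a combinatorial constant depending on $k$ (the binomial coefficients do not disappear), so the inequality with constant $1$ should be understood up to such a constant or after renormalizing the norms --- your phrase ``the stated norm bounds follow by summing'' glosses over this, exactly as the paper does.
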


We denote the Weyl symbol of an operator $A\in \mc L(\H)$ by $\operatorname{sym}^w(A)$, i.e.\ $\operatorname{sym}^w(A)\in\mc S'(\RR^{2d})$ is the tempered distribution such that $\operatorname{op}^w(\operatorname{sym}^w(A))=A$. 
For every $A\in W^{2d,p}(\H)$ we have
\begin{equation}
    \symw(A) = A * \opw(\delta_0) = P(A) * K
\end{equation}
to which we again apply the QHA version of Young's convolution inequality to obtain:
\begin{theorem}
    Let $p\in [1,\infty)$ and let $A\in W^{2d,p}(\H)$. Set $c=(2\pi)^{d/p}\|K\|_{\mc T^1}$, then 
    \begin{equation}
        \|\symw(A)\|_{L^p} \le c\,\|A\|_{W^{k,p}}.
    \end{equation}
\end{theorem}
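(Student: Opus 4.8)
The plan is to mirror the proof of the Schatten-class Calder\'on--Vaillancourt estimate above, interchanging the roles of functions and operators in the quantum harmonic analysis convolutions.

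First I would record the operator counterpart of the identity $\opw(f)=P(f)*K$. Because the Weyl calculus satisfies $\partial_j\opw(g)=-\opw(\partial_j g)$, it intertwines the phase-space Laplacian $\Delta=\sum_{j=1}^{2d}\partial_j^2$ with the phase-space Laplacian of operators, so that $P(K)=\opw\bigl((1-\Delta)^d(1-\Delta)^{-d}\delta_0\bigr)=\opw(\delta_0)$. For a Schwartz operator $A\in\S(\H)$ --- where $A$ is trace class and the elementary calculus of the QHA convolutions applies without reservation --- the relation $\opw(f)*\opw(g)=f*g$ with $g=\delta_0$ gives $A*\opw(\delta_0)=\symw(A)*\delta_0=\symw(A)$, while moving the constant-coefficient differential operator $P$ across the operator--operator convolution gives $A*\opw(\delta_0)=A*P(K)=P(A)*K$. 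Combining these,
\begin{equation*}
    \symw(A)=P(A)*K,\qquad A\in\S(\H).
\end{equation*}

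Next I would propagate this identity to all of $W^{2d,p}(\H)$ by density. The left-hand side $A\mapsto\symw(A)$, viewed on $W^{2d,p}(\H)\hookrightarrow\mc T^p(\H)\hookrightarrow\mc L(\H)\hookrightarrow\S'(\H)$, is continuous into $\S'(\RR^{2d})$, being a restriction of the topological isomorphism $\symw\colon\S'(\H)\to\S'(\RR^{2d})$. On the right-hand side, $A\mapsto P(A)$ is bounded from $W^{2d,p}(\H)$ into $\mc T^p(\H)$ since $P$ expands into a finite linear combination of the contractions $\partial^\alpha$, $|\alpha|\le 2d$, each of which maps $W^{2d,p}(\H)$ into $\mc T^p(\H)=W^{0,p}(\H)$; and $B\mapsto B*K$ is bounded from $\mc T^p(\H)$ into $L^p(\RR^{2d})\hookrightarrow\S'(\RR^{2d})$ by the QHA Young inequality $\|B*K\|_{L^p}\le(2\pi)^{d/p}\|B\|_{\mc T^p}\|K\|_{\mc T^1}$. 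As $\S(\H)$ is dense in $W^{2d,p}(\H)$, the identity $\symw(A)=P(A)*K$ persists for every $A\in W^{2d,p}(\H)$.

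With this in hand the estimate is immediate: applying the QHA Young inequality with $B=P(A)$, followed by $\|P(A)\|_{\mc T^p}\le\|A\|_{W^{2d,p}}$ (the triangle inequality after expanding $P$, with the dimensional constant absorbed exactly as on the function side), gives $\|\symw(A)\|_{L^p}\le(2\pi)^{d/p}\|K\|_{\mc T^1}\|A\|_{W^{2d,p}}=c\,\|A\|_{W^{2d,p}}$. The one point that requires genuine care --- and the reason the argument is not a one-line corollary of Young's inequality --- is that $\opw(\delta_0)$ is itself a multiple of the parity operator, hence bounded but \emph{not} trace class, so it cannot be fed into Young directly; this forces the detour through $P(K)$ with $K\in\mc T^1$, and the convolution identity $\symw(A)=P(A)*K$ must first be established on the trace-class (Schwartz) operators and only then extended by continuity. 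Everything else is a routine transcription of the function-side computation.
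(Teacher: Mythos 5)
Your proposal is correct and follows essentially the same route as the paper: the identity $\symw(A)=A*\opw(\delta_0)=P(A)*K$ combined with the QHA Young inequality $\|B*K\|_{L^p}\le(2\pi)^{d/p}\|B\|_{\mc T^p}\|K\|_{\mc T^1}$. The extra care you take --- establishing the identity on Schwartz operators and extending by density, and noting that $\opw(\delta_0)$ (a multiple of the parity operator) is not trace class, which is exactly why the detour through $K\in\mc T^1$ is needed --- is detail the paper leaves implicit, not a different method.
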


Since the Weyl quantization is covariant and linear, these estimates immediately imply the $p$-version of Cordes' theorem:

\begin{theorem}
    Let $p\in [1,\infty)$. The space $W^{\infty,p}(\H)$ consists precisely of Weyl pseudodifferential operators with symbols in $W^{\infty,p}(\RR^{2d})$.
\end{theorem}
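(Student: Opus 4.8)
The claim is the set equality $W^{\infty,p}(\H)=\opw\bigl(W^{\infty,p}(\RR^{2d})\bigr)$; by definition of $\symw$ this is the same as saying that the Weyl symbol of $A$ lies in $W^{\infty,p}(\RR^{2d})$. I would prove the two inclusions separately, each being a short formal consequence of one of the two Calder\'on--Vaillancourt estimates above together with the covariance identity $\alpha_z(\opw(f))=\opw(\alpha_z(f))$. Differentiating that covariance gives $\partial^\alpha\opw(f)=(-1)^{|\alpha|}\opw(\partial^\alpha f)$ for every $\alpha\in\NN_0^{2d}$ (the sign being the convention discussed after Theorem~\ref{thm:CordesWeyl}), and equivalently $\partial^\alpha\symw(A)=(-1)^{|\alpha|}\symw(\partial^\alpha A)$ whenever both sides are defined; recall also (Lemma~\ref{lem:WeylQuant}) that $\opw$ is a bijection $\mc S'(\RR^{2d})\to\mc S'(\H)$ with inverse $\symw$, so there is no mismatch between the two ways of phrasing the conclusion.

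\textbf{Operators with smooth symbols are smooth.} Let $f\in W^{\infty,p}(\RR^{2d})$. First I would note that $z\mapsto\alpha_z(f)$ is a $C^\infty$ map from $\RR^{2d}$ into $W^{2d,p}(\RR^{2d})$: translation is a strongly continuous isometric representation of $\RR^{2d}$ on $W^{2d,p}(\RR^{2d})$ (here $p<\infty$ matters), and the orbit of a vector $f$ is $C^k$ exactly when $\partial^\alpha f\in W^{2d,p}(\RR^{2d})$ for all $|\alpha|\le k$, which holds for every $k$ since $f\in W^{\infty,p}$. Composing with the bounded linear map $\opw\colon W^{2d,p}(\RR^{2d})\to\mc T^p(\H)$ supplied by the first Calder\'on--Vaillancourt estimate, and using covariance, the map $z\mapsto\alpha_z(\opw(f))=\opw(\alpha_z(f))$ is $C^\infty$ into $\mc T^p(\H)$. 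Hence $\opw(f)\in W^{k,p}(\H)$ for every $k$, i.e.\ $\opw(f)\in W^{\infty,p}(\H)$.

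\textbf{Smooth operators have smooth symbols.} Conversely let $A\in W^{\infty,p}(\H)$, so in particular $A\in W^{2d,p}(\H)$. The second Calder\'on--Vaillancourt estimate then gives $f:=\symw(A)\in L^p(\RR^{2d})$, with $\opw(f)=A$. For an arbitrary multi-index $\alpha$ we have $\partial^\alpha A\in W^{\infty,p}(\H)\subseteq W^{2d,p}(\H)$ (using the contractions $\partial^\alpha\colon W^{k,p}(\H)\to W^{k-|\alpha|,p}(\H)$), so again by that estimate $\symw(\partial^\alpha A)\in L^p(\RR^{2d})$, and $\symw(\partial^\alpha A)=(-1)^{|\alpha|}\partial^\alpha f$ by the identity recalled above. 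Thus every distributional partial derivative of $f$ lies in $L^p(\RR^{2d})$, i.e.\ $f\in W^{\infty,p}(\RR^{2d})$, and $A=\opw(f)\in\opw\bigl(W^{\infty,p}(\RR^{2d})\bigr)$, completing the argument.

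\textbf{Where the work lies.} The genuine analytic content is entirely absorbed into the two Calder\'on--Vaillancourt estimates, which in turn rest on $K=\opw((1-\Delta)^{-d}\delta_0)\in\mc T^1(\H)$ and the quantum Young inequality; the rest is bookkeeping. The only mildly delicate points are the standard fact that the orbit map of a strongly continuous isometric representation on a Sobolev-type $L^p$-space ($p<\infty$) is precisely as regular as the Sobolev order of the base vector allows, and keeping track of the sign $(-1)^{|\alpha|}$ that relates operator phase-space derivatives to classical symbol derivatives. Neither is a real obstacle, but both should be stated cleanly since the whole proof hinges on interchanging $\opw$/$\symw$ with the phase-space differentiation.
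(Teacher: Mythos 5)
Your argument is correct and is essentially the paper's own proof: the paper disposes of this theorem in one line ("Since the Weyl quantization is covariant and linear, these estimates immediately imply the $p$-version of Cordes' theorem"), and your two inclusions, each obtained from one of the two Schatten Calder\'on--Vaillancourt estimates together with the covariance identity $\alpha_z(\opw(f))=\opw(\alpha_z(f))$ and the sign relation $\partial^\alpha\opw(f)=(-1)^{|\alpha|}\opw(\partial^\alpha f)$, are exactly the details that remark leaves implicit. The only quibble is bibliographic: the sign convention is discussed just before Theorem~\ref{thm:CordesWeyl}, not after it.
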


\section{Heisenberg-analytic operators}\label{sec:analytic}

In this last section, we are again only concerned with the phase space $\Xi = \mathbb R^{2n}$ and Weyl quantization. The results in this section can again be modified to obtain analogous statements for other quantizations. 

We recall that a function $f: \mathbb R^{2n} \to X$, where $X$ is some Banach space, is called \emph{real analytic} provided for each $x_0 \in \mathbb R^{2n}$ the function $f$ can be expressed in a neighborhood around $x_0$ as a convergent power series:
\begin{align*}
    f(x) = \sum_{\beta \in \mathbb N_0^{2n}} a_\beta (x-x_0)^\beta, \quad a_\beta \in X, |x-x_0| < \varepsilon.
\end{align*}
Clearly, for a function to be real analytic, it is necessary that the function is smooth. The following fact relating smooth and analytic functions is well-known:
\begin{lemma}\label{lem:real_analytic}
    Let $X$ be a Banach space $f: \mathbb R^{2n} \to X$ be $C^\infty$. Then, $f$ is analytic if and only if for each $x_0 \in \mathbb R^{2n}$ there exists an $\varepsilon > 0$ and $C> 0$, $R > 0$ such that for all $x \in \mathbb R$ with $|x-x_0| < \varepsilon$:
    \begin{align*}
        \| \partial^\beta f(x)\|_X \leq C \frac{\beta!}{R^{|\beta|}}.
    \end{align*}
\end{lemma}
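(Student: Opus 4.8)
The plan is to reduce the vector-valued statement to the scalar case by testing against functionals, and then invoke the classical characterization of real-analyticity for scalar functions via Cauchy-type derivative bounds. The estimate $\| \partial^\beta f(x)\|_X \le C\beta!/R^{|\beta|}$ on a ball is exactly the hypothesis needed to guarantee that the formal Taylor series $\sum_\beta \frac{1}{\beta!}\partial^\beta f(x_0)(x-x_0)^\beta$ converges absolutely (in the norm of $X$) for $|x-x_0|$ small and, by a standard estimate on the Taylor remainder in several variables, converges to $f(x)$. So the two directions are genuinely separate: the ``if'' direction is a direct convergence argument, while the ``only if'' direction needs the Cauchy inequalities.

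For the ``if'' direction, fix $x_0$ and suppose $\|\partial^\beta f(x)\|_X \le C\beta!/R^{|\beta|}$ for $|x-x_0|<\varepsilon$. First I would write Taylor's formula with integral remainder along the segment from $x_0$ to $x$: for $|x-x_0|<\min(\varepsilon, R/(2d))$ one has
\begin{align*}
    f(x) = \sum_{|\beta|\le N} \frac{\partial^\beta f(x_0)}{\beta!}(x-x_0)^\beta + R_N(x),
\end{align*}
where the remainder $R_N$ is a sum over $|\beta|=N+1$ of integrals of $\partial^\beta f$ along the segment against $(x-x_0)^\beta$. Using the hypothesis bound on the segment, the multinomial identity $\sum_{|\beta|=k}\frac{k!}{\beta!}|x-x_0|^\beta = (\sum_j |x_j - x_{0,j}|)^k \le (2d)^k |x-x_0|_\infty^k$ (or the analogous $\ell^1$ estimate), one gets $\|R_N(x)\|_X \le C' (2d\,|x-x_0|/R)^{N+1}\to 0$. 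Simultaneously the same estimates show the Taylor series converges absolutely in $X$ on that ball. Hence $f$ is given locally by a norm-convergent power series, i.e.\ real analytic. The same computation shows the power series coefficients are necessarily $a_\beta = \partial^\beta f(x_0)/\beta!$.

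For the ``only if'' direction, suppose $f$ is real analytic, so near $x_0$ we have $f(x) = \sum_\beta a_\beta (x-x_0)^\beta$ converging in $X$ for $|x-x_0|<\varepsilon_0$. The key step is to apply scalar Cauchy estimates: for each $\Lambda \in X^*$ with $\|\Lambda\|\le 1$, the scalar function $g_\Lambda = \Lambda\circ f$ is real analytic with the same radius, with Taylor coefficients $\Lambda(a_\beta)$; by the classical multivariate Cauchy inequalities (applied on a polydisc of polyradius $r<\varepsilon_0/(2d)$, say, using that $\sum_\beta |\Lambda(a_\beta)| r^{|\beta|}\le \sup_\Lambda\sum_\beta\|a_\beta\| r^{|\beta|} =: M<\infty$ by absolute convergence) one obtains $|\Lambda(a_\beta)| \le M/r^{|\beta|}$. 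Taking the supremum over $\|\Lambda\|\le 1$ gives $\|a_\beta\|_X \le M/r^{|\beta|}$, hence $\|\partial^\beta f(x_0)\|_X = \beta!\,\|a_\beta\|_X \le M\,\beta!/r^{|\beta|}$, which is the claimed bound \emph{at the point $x_0$}. To get it on a whole neighborhood with uniform constants, I would note that for $x_1$ with $|x_1-x_0|$ small the series can be re-expanded around $x_1$ with radius bounded below uniformly, so the same argument yields the bound at every $x_1$ in a ball, with constants depending only on $x_0$; alternatively one can simply absorb the shift into $C$ and shrink $\varepsilon$.

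The main obstacle is not conceptual but is the bookkeeping in the second direction: making sure the Cauchy estimate is uniform over a neighborhood of $x_0$ (not just at $x_0$ itself), which is what the statement demands. This is handled by the standard observation that the radius of convergence of the re-expanded series is lower semicontinuous, so on a slightly smaller ball one has a uniform positive radius and a uniform bound $M$ on $\sum_\beta \|a_\beta^{(x_1)}\| r^{|\beta|}$; everything else is the routine passage between $X$-valued and scalar statements via the Hahn–Banach theorem and the classical one-variable/several-variable Cauchy inequalities, which I would cite rather than reprove.
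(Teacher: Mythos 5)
Your argument is essentially correct, but note that the paper does not prove this lemma at all: it cites the scalar case (Krantz--Parks, Prop.\ 2.2.10) and remarks that the Banach-valued case follows ``with the obvious modifications.'' Your ``if'' direction is exactly that standard argument (Taylor formula with remainder, multinomial estimate, remainder $\to 0$ on a ball of radius comparable to $R$), and it transfers verbatim to $X$-valued $f$ since the integral remainder and the bounds are norm estimates. On the ``only if'' direction you deviate from the cited route: the classical proof never dualizes -- once one knows $\sum_\beta \|a_\beta\|_X\, r^{|\beta|} =: M < \infty$ on some polyradius $r>0$ (which, if the definition of convergence is merely pointwise, is obtained from Abel's lemma: boundedness of the terms at one point with nonzero coordinates gives geometric domination on smaller polydiscs), the coefficient bound $\|a_\beta\|_X \le M r^{-|\beta|}$ is immediate, and the uniform bound $\|\partial^\beta f(x)\|_X \le C\beta!/R^{|\beta|}$ on a smaller ball follows by termwise differentiation of the series and a geometric-series estimate, with no re-expansion needed. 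Your Hahn--Banach/scalar-Cauchy detour is therefore superfluous (it reproves a bound you already have in norm), and your re-expansion step, while legitimate, requires its own justification (rearrangement of the double series via absolute convergence) that the direct termwise estimate avoids. So: correct in substance, with the two standard points you should make explicit being the Abel-lemma step that yields summability of $\|a_\beta\| r^{|\beta|}$ and the identification $\partial^\beta f(x_0)=\beta!\,a_\beta$ by termwise differentiation; the paper's citation-based proof buys brevity, your version buys self-containedness at the cost of an unnecessary duality layer.
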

A proof for the scalar case (i.e., $X = \mathbb C$) can be found in \cite[Prop.\ 2.2.10]{Krantz_Parks2002}, and the proof works for Banach space-valued functions with the obvious modifications.

Let $f \in L^\infty(\Xi)$ and $A \in \mathcal L(\mathcal H)$. We say that $f$ is \emph{uniformly analytic} and $A$ is Heisenberg-analytic, respectively, if the functions $h_f: \Xi \to L^\infty(\Xi)$ and $h_A: \Xi \to \mathcal L(\mathcal H)$ given by
\begin{align*}
    h_f(z) = \alpha_z(f), \quad h_A(z) = \alpha_z(A) \in \mathcal L(\mathcal H)
\end{align*}
are real analytic functions. Using the natural covariance properties of these maps, it is not hard to see that these functions are real analytic if and only if they are analytic in a neighborhood of zero, i.e., if for some $\varepsilon > 0$ and $|z| < \varepsilon$ we have:
\begin{align*}
    \alpha_z(f) = \sum_{\beta \in \mathbb N_0^{2n}} a_\beta z^\beta, \quad \alpha_z(A) = \sum_{\beta \in \mathbb N_0^{2n}} b_\beta z^\beta, \quad a_\beta \in L^\infty(\Xi), ~b_\beta \in \mathcal L(\mathcal H). 
\end{align*}
Note that by evaluating derivatives of these expressions at zero, we immediately obtain that $a_\beta \in C_b^\infty(\Xi)$ and $b_\beta \in C^\infty(\mathcal H)$. Based on this and Lemma \ref{lem:real_analytic}, we obtain the following:
\begin{lemma}
    Let $f \in C_b^\infty(\Xi)$ and $A \in C^\infty(\mathcal H)$. Then, $f$ is uniformly analytic if and only if there exists $C, R > 0$ such that for every $\beta \in \mathbb N_0^{2n}$:
    \begin{align}\label{cond_HeisenbergAnalytic}
        \|\partial^\beta f\|_\infty \leq C \frac{\beta!}{R^{|\beta|}}.
    \end{align}
    Analogously, $A$ is Heisenberg-analytic if and only if there exists $C, R > 0$ such that for every $\beta \in \mathbb N_0^{2n}$:
    \begin{align}
        \|\partial^\beta A\|_\infty \leq C \frac{\beta!}{R^{|\beta|}}.
    \end{align}
\end{lemma}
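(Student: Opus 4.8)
The plan is to deduce the statement from Lemma~\ref{lem:real_analytic} applied to the Banach-space valued maps $h_f\colon \Xi\to L^\infty(\Xi)$ and $h_A\colon \Xi\to\mathcal L(\mathcal H)$. By the identifications recalled in Section~\ref{sec:1}, the hypotheses $f\in C_b^\infty(\Xi)$ and $A\in C^\infty(\mathcal H)$ are precisely equivalent to $h_f\in C^\infty(\Xi;L^\infty(\Xi))$ and $h_A\in C^\infty(\Xi;\mathcal L(\mathcal H))$, so in particular $h_f$ and $h_A$ are smooth and Lemma~\ref{lem:real_analytic} is applicable. Hence $f$ is uniformly analytic if and only if for each $z_0\in\Xi$ there are $\varepsilon,C,R>0$ with $\|D^\beta h_f(z)\|_{L^\infty}\le C\beta!/R^{|\beta|}$ for all $\beta\in\mathbb N_0^{2n}$ and all $|z-z_0|<\varepsilon$, where $D^\beta$ is the ordinary ($L^\infty(\Xi)$-valued) partial derivative of $z\mapsto h_f(z)$; the analogous statement holds for $A$ with $\mathcal L(\mathcal H)$ in place of $L^\infty(\Xi)$.

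The key observation is that these Banach-valued derivatives are translation invariant in norm and agree with the phase space derivatives at the origin. From $\alpha_{z+u}(f)=\alpha_z(\alpha_u(f))$ we get $h_f(z+u)=\alpha_z(h_f(u))$, and since $\alpha_z$ is a bounded linear operator on $L^\infty(\Xi)$ it commutes with the $u$-derivatives; differentiating at $u=0$ yields $D^\beta h_f(z)=\alpha_z(D^\beta h_f(0))$. A short induction on $|\beta|$ from the defining formula $\partial_j g=\lim_{t\to0}(\alpha_{te_j}(g)-g)/t$ shows that $D^\beta h_f(0)=\partial^\beta f$, the quantum harmonic analysis phase space derivative (the harmless sign discrepancy between $\partial_j$ and the ordinary partial derivative noted in Section~\ref{sec:1} plays no role for norms). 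Because $\alpha_z$ acts isometrically on $L^\infty(\Xi)$ we conclude $\|D^\beta h_f(z)\|_{L^\infty}=\|\partial^\beta f\|_\infty$ for every $z$. Replacing $L^\infty(\Xi)$ by $\mathcal L(\mathcal H)$ and using that $\alpha_z=W_z(\,\cdot\,)W_{-z}$ is an isometry of $\mathcal L(\mathcal H)$, the same computation gives $\|D^\beta h_A(z)\|_{op}=\|\partial^\beta A\|_{op}$ for every $z$.

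Substituting these identities into the criterion of the first paragraph, the local Cauchy-type estimates collapse to global, $z$-independent ones: $f$ is uniformly analytic iff there exist $C,R>0$ with $\|\partial^\beta f\|_\infty\le C\beta!/R^{|\beta|}$ for all $\beta$, and $A$ is Heisenberg-analytic iff there exist $C,R>0$ with $\|\partial^\beta A\|_{op}\le C\beta!/R^{|\beta|}$ for all $\beta$. For the ``only if'' direction one simply takes $z_0=0$ and reads off the global bound using $z$-independence; for the ``if'' direction the same constants $C,R$ verify the hypothesis of Lemma~\ref{lem:real_analytic} at every $z_0$.

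The only genuinely delicate point---and thus the main obstacle---is the bookkeeping in the second paragraph: correctly identifying the iterated Banach-valued derivatives $D^\beta h_f$, $D^\beta h_A$ with the iterated phase space derivatives $\partial^\beta f$, $\partial^\beta A$, and verifying that conjugation by the unitaries $W_z$ (and translation on $L^\infty(\Xi)$) is isometric, so that the estimates become translation invariant. Once this is in place, the lemma follows at once from Lemma~\ref{lem:real_analytic}.
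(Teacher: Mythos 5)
Your proposal is correct and follows essentially the same route the paper intends: it invokes Lemma \ref{lem:real_analytic} for the Banach-space valued maps $h_f$, $h_A$, and uses the covariance $h(z+u)=\alpha_z(h(u))$ together with the isometry of $\alpha_z$ on $L^\infty(\Xi)$ and $\mathcal L(\mathcal H)$ to turn the local derivative bounds into the global, $z$-independent estimates \eqref{cond_HeisenbergAnalytic}. The identification $D^\beta h(0)=\partial^\beta$ and the sign remark are exactly the bookkeeping the paper leaves implicit, so no gap remains.
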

Note that uniform analyticity of $f$ enforces real-analyticity of $f$ in the classical sense, i.e., for every $w \in \mathbb R^{2n}$ there exists a neighborhood in which we have:
\begin{align*}
    f(z) = \sum_{\beta \in \mathbb N_0^{2n}} \frac{\partial^\beta f(w)}{\beta!}(w-z)^\beta.
\end{align*}
In particular, a function $f$ is uniformly analytic if and only if it is real-analytic on all of $\mathbb R^{2n}$ and satisfies \eqref{cond_HeisenbergAnalytic}.

Based on the previous lemma, we arrive at the following result concerning Heisenberg-analytic operators:
\begin{theorem}
Let $A = \operatorname{op}^w(f)$. Then, $A$ is Heisenberg-analytic if and only if $f$ is uniformly analytic.
\end{theorem}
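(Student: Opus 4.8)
The plan is to reduce the equivalence to the factorial-growth characterizations of uniform analyticity and of Heisenberg-analyticity established in the lemma immediately preceding the theorem, and then to transfer those bounds across the Weyl quantization exactly as in the proof of Cordes' theorem in Section~\ref{sec:1}, now keeping track of constants. Two facts carry the argument: the intertwining identity $\partial^\beta\opw(f)=(-1)^{|\beta|}\opw(\partial^\beta f)$ together with Cordes' theorem (Theorem~\ref{thm:CordesWeyl}), and the two ``one-step'' inequalities read off from the commuting diagram at the end of Section~\ref{sec:1}:
\begin{align*}
  \|\opw(g)\|_{op}&\lesssim \|g\|_{M^{\infty,1}}\lesssim \|g\|_{C_b^{2d+1}}=\max_{|\gamma|\le 2d+1}\|\partial^\gamma g\|_\infty,\\
  \|\symw(B)\|_\infty&\lesssim \|B\|_{M^{\infty,1}(\mathcal H)}\lesssim \|B\|_{C^{2d+1}(\mathcal H)}=\max_{|\gamma|\le 2d+1}\|\partial^\gamma B\|_{op}.
\end{align*}
Note first that if $A=\opw(f)$ is Heisenberg-analytic then in particular $A\in C^\infty(\mathcal H)$, so $f\in C_b^\infty(\Xi)$ by Theorem~\ref{thm:CordesWeyl}; conversely if $f$ is uniformly analytic then $f\in C_b^\infty(\Xi)$, so $A\in C^\infty(\mathcal H)$. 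In either case the intertwining identity holds for every $\beta$, hence $\symw(\partial^\beta A)=(-1)^{|\beta|}\partial^\beta f$.

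Applying the first inequality with $g=\partial^\beta f$ gives $\|\partial^\beta A\|_{op}\lesssim\max_{|\gamma|\le 2d+1}\|\partial^{\beta+\gamma}f\|_\infty$, and applying the second with $B=\partial^\beta A$ gives $\|\partial^\beta f\|_\infty\lesssim\max_{|\gamma|\le 2d+1}\|\partial^{\beta+\gamma}A\|_{op}$. Both implications then come down to a single elementary observation about multi-indices: if $\|\partial^\beta h\|\le C\,\beta!/R^{|\beta|}$ holds for all $\beta\in\NN_0^{2n}$ (with $\|\cdot\|$ denoting $\|\cdot\|_\infty$ or $\|\cdot\|_{op}$ as appropriate), then also $\max_{|\gamma|\le 2d+1}\|\partial^{\beta+\gamma}h\|\le C'\,\beta!/(R/2)^{|\beta|}$ for all $\beta$, with $C'$ depending only on $C$, $R$ and $d$. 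This follows from $(\beta+\gamma)!=\binom{\beta+\gamma}{\beta}\,\beta!\,\gamma!\le 2^{|\beta|+|\gamma|}\beta!\,\gamma!$ together with the crude bounds $\gamma!\le(2d+1)!$ and $R^{|\gamma|}\ge\min(1,R^{2d+1})$, valid for $|\gamma|\le 2d+1$. Feeding this back into the two displayed chains yields: $f$ uniformly analytic $\Rightarrow$ $A$ Heisenberg-analytic (with radius $R/2$), and $A$ Heisenberg-analytic $\Rightarrow$ $f$ uniformly analytic, which is exactly the asserted equivalence.

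I do not expect a genuine obstacle here: the argument is symmetric in $f$ and $A$ and uses nothing beyond the two one-step inequalities and the covariance of $\opw$. The single point worth isolating is that the embeddings of Section~\ref{sec:1} cost only a \emph{fixed, finite} number ($2d+1$) of derivatives, so passing from $f$ to $A$ and back degrades the constants $C$ and $R$ in a factorial estimate but not its qualitative analytic-type form; the combinatorial inequality above is precisely the quantitative expression of this stability, and it is the one computation that actually needs to be carried out.
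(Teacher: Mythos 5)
Your proposal is correct and follows essentially the same route as the paper: both directions are reduced, via the factorial-growth characterization in the preceding lemma and the covariance identity $\partial^\beta\operatorname{op}^w(f)=(-1)^{|\beta|}\operatorname{op}^w(\partial^\beta f)$, to the Calder\'on--Vaillancourt-type bound $\|\operatorname{op}^w(g)\|_{op}\lesssim\max_{|\gamma|\le 2d+1}\|\partial^\gamma g\|_\infty$ and its reverse from the Section~\ref{sec:1} embedding chain, with the $2d+1$ extra derivatives absorbed by shrinking the radius. The only difference is bookkeeping: you absorb the loss via $\binom{\beta+\gamma}{\beta}\le 2^{|\beta|+|\gamma|}$ and radius $R/2$, while the paper uses a polynomial factor $|\beta|^{2n+1}$ and radius $R^{3/2}$ --- the same idea, and your version is if anything slightly cleaner.
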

\begin{proof}
    We only prove that $A$ is Heisenberg-analytic whenever $f$ is uniformly analytic, the other implication is obtained by the same argument with the reverse Calder\'{o}n-Vaillancourt estimate. Let $C, R > 0$ such that
    \begin{align*}
        \| \partial^\beta f\|_\infty \leq C \frac{\beta!}{R^{|\beta|}}.
    \end{align*}
    Without loss of generality, we may assume that $R < 1$. Then, we obtain from Calder\'{o}n-Vaillancourt:
    \begin{align*}
        \| \partial^\beta \operatorname{op}^w(f)\|_{op} &\leq C_1 \sum_{|\gamma| \leq 2n+1} \| \partial^{\beta + \gamma}f\|_\infty\\
        &\leq C_1 C\sum_{|\gamma| \leq 2n+1} \frac{(\beta + \gamma)!}{R^{|\beta + \gamma|}}\\
        &= C_1 C \frac{\beta!}{R^{|\beta|}} \sum_{|\gamma| \leq 2n+1} \frac{(\beta + \gamma)!}{\beta ! R^{|\gamma|}}\\
        &\leq C_1 C \frac{\beta!}{R^{|\beta|}} \sum_{|\gamma| \leq 2n+1} \frac{1}{R^{|\gamma|}} \cdot |\beta|^{|\gamma|}
        \intertext{With $C_{2,R} = C_1 C \sum_{|\gamma| \leq 2n+1} R^{-|\gamma|}$, we obtain:}
        &\leq C_{2,R} \frac{\beta!}{R^{|\beta|}} \cdot |\beta|^{2n+1}\\
        &= C_{2,R} \frac{\beta!}{(R^{3/2})^{|\beta|}} \cdot \frac{|\beta|^{2n+1}}{R^{-|\beta|/2}}
        \intertext{Since $|\beta|^{2n+1}$ grows polynomially in $\beta$ and $R^{-|\beta|/2}$ grows exponentially (since $R^{-1} > 1$), we see that $|\beta|^{2n+1}/R^{-|\beta|/2}$ is a bounded function of $\beta$ such that with appropriate $C_{3,R} > 0$ and $S = R^{3/2} > 0$:}
        \| \partial^\beta \operatorname{op}^w(f)\|_{op} &\leq C_{3, R} \frac{\beta!}{S^{|\beta|}}.
    \end{align*}
    By Lemma \ref{lem:real_analytic}, we obtain that $z \mapsto \alpha_z(A)$ is real-analytic in a neighborhood of zero.
\end{proof}
\begin{example}
    Let $f(x, y) = \cos(x) \sin(y)$. Then, $f \in C_b^\infty(\mathbb R^2)$. By Cordes' theorem, $\operatorname{op}^w(f)$ is Heisenberg-smooth. Since $\| \partial^\alpha f\|_\infty = 1 \leq \frac{\alpha!}{1^{|\alpha|}}$, the operator is even Heisenberg-analytic.
\end{example}
To add another observation, we want to mention the following:
\begin{proposition}\label{proof:analyticspectralinvariance}
    The set of uniformly analytic functions is a spectrally invariant dense subalgebra of $C_b^\infty(\Xi)$. The set of Heisenberg analytic operators is a spectrally invariant dense subalgebra of $C_b^\infty(\mathcal H)$. 
\end{proposition}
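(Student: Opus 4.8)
The plan is to route both halves of the statement through one structural observation: an element $a$ — a function $f\in C_b^\infty(\Xi)$ or an operator $A\in C^\infty(\H)$ — is analytic in the relevant sense exactly when its orbit map $z\mapsto\alpha_z(a)$ extends to a \emph{bounded holomorphic} map on some complex tube $T_r=\{w\in\CC^{2n}:|\operatorname{Im} w_j|<r,\ j=1,\dots,2n\}$ valued in $L^\infty(\Xi)$, resp.\ $\mathcal L(\H)$. One direction is the Taylor series: since the phase space derivatives commute with $\alpha_z$ and $\alpha_z$ acts isometrically, $\|\partial^\beta\alpha_z(a)\|$ is independent of $z$, so the bound $\|\partial^\beta a\|\le C\beta!R^{-|\beta|}$ makes $\sum_\beta\frac{1}{\beta!}\partial^\beta(\alpha_z a)\,(w-z)^\beta$ converge and stay uniformly bounded on every smaller tube, the local expansions around different real base points agreeing by uniqueness of analytic continuation. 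Conversely, Cauchy's integral formula — in its Banach-space-valued version for the operator case — on a polydisc centered at a real point recovers $\|\partial^\beta a\|\le M\beta!(2/r)^{|\beta|}$, hence analyticity via Lemma~\ref{lem:real_analytic}. Setting up this dictionary cleanly in the operator-valued setting — especially patching the local expansions into a single holomorphic function on a tube carrying a \emph{global} bound — is the one step I expect to require genuine care; the rest is routine.

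Granting the dictionary, the algebra structure is straightforward. Closure under linear combinations is clear. For products I would combine the Leibniz rule $\partial^\beta(ab)=\sum_{\gamma\le\beta}\binom{\beta}{\gamma}(\partial^\gamma a)(\partial^{\beta-\gamma}b)$ — valid for operators by the product rule proven above — with the identities $\binom{\beta}{\gamma}\gamma!(\beta-\gamma)!=\beta!$ and $\#\{\gamma\le\beta\}=\prod_j(\beta_j+1)\le 2^{|\beta|}$, which collapse the two exponential bounds into one for $ab$, with constant $C_aC_b$ and radius $\min(R_a,R_b)/2$. Since the units $1$ and $I$ trivially satisfy the estimates, one obtains \emph{unital} subalgebras of $C_b^\infty(\Xi)$ and of $C^\infty(\H)$.

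For spectral invariance it suffices to show that an analytic element which is invertible in the ambient algebra has an analytic inverse. In the function case, invertibility in $C_b^\infty(\Xi)$ forces $\delta:=\inf_\Xi|f|>0$; the extension $\widetilde f$ is uniformly Lipschitz on a smaller tube (Cauchy estimate for $\nabla\widetilde f$), so $|\widetilde f|\ge\delta/2$ on some $T_{r'}$, whence $1/\widetilde f$ is bounded and holomorphic there and $1/f$ is uniformly analytic. In the operator case, $A^{-1}\in C^\infty(\H)$ is already known — $C^\infty(\H)$ is spectrally invariant in $\mathcal L(\H)$ and the derivative-of-inverse rule keeps one inside $C^\infty$ — so only the bound on $\|\partial^\beta A^{-1}\|_{op}$ is missing. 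Here I would use that the extension $\widetilde A$ of $z\mapsto\alpha_z(A)$ is continuous with $\widetilde A(0)=A$ invertible, so a Neumann-series argument makes $\widetilde A(w)$ invertible with uniformly bounded inverse on a small complex ball about $0$; since operator inversion is holomorphic on the invertibles, $w\mapsto\widetilde A(w)^{-1}$ is holomorphic there and restricts to $z\mapsto\alpha_z(A^{-1})$ on the real axis. A Cauchy estimate at the origin then gives $\|\partial^\beta A^{-1}\|_{op}\le M\beta!s^{-|\beta|}$, so $A^{-1}$ is Heisenberg-analytic. Together with the algebra structure of the previous paragraph, this is exactly the claim.
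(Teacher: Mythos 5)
Your proof is correct, but it takes a genuinely different route from the paper on the key point, the inverse. The paper stays entirely on the real, formal-power-series side: after normalizing $\|A\|\leq \tfrac12$, it writes $\alpha_z(A)=\sum_\alpha a_\alpha z^\alpha$, defines the coefficients $b_\beta$ of the inverse series by the usual recursion $b_\beta=-a_0^{-1}\sum a_\alpha b_{\beta-\alpha}$, and closes a majorant-type induction on $\|b_\beta\|$ to get convergence of $\sum b_\beta z^\beta$ to $\alpha_z(A^{-1})$ near $0$ (the subalgebra part and the function case are only indicated as ``standard arguments''). You instead complexify: holomorphic extension of the orbit map to a tube/ball via the uniform derivative bounds, Neumann-series invertibility of the extension near $0$ (resp.\ a Lipschitz lower bound $|\widetilde f|\geq\delta/2$ on a thinner tube in the function case), holomorphy of inversion, and Cauchy estimates to recover $\|\partial^\beta A^{-1}\|\leq M\beta!\,s^{-|\beta|}$, feeding this back into the paper's characterization lemma after noting $A^{-1}\in C^\infty(\mathcal H)$ by the already-proved spectral invariance of $C^\infty(\mathcal H)$. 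What each buys: the paper's argument needs no several-complex-variables or Banach-valued holomorphy machinery, only the recursion and an induction; your route requires the (routine but not free) setup of the tube extension and patching, but in exchange the Cauchy estimates hand you directly the geometric coefficient bounds that convergence actually requires — note that the paper's induction, as written, tracks bounds of the form $\beta!\,S^{-|\beta|}$ on the coefficients $b_\beta$, which is looser than what one wants at that point, whereas your argument sidesteps this entirely. Your explicit Leibniz/combinatorial treatment of the product (constant $C_aC_b$, radius $\min(R_a,R_b)/2$) and the quantitative function-case argument are also more detailed than what the paper records.
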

\begin{proof}
    We only sketch the proof for the operators, the statement for functions can be dealt with analogously. We first deal with the spectral invariance. In principle, these are just standard arguments concerning analytic functions. Since we have some extra structure here (the inverse needs to be of the form $\alpha_z(A^{-1})$) and our analytic functions at hand take values in a noncommutative $C^\ast$-algebra, we go through these arguments for clarity.

    Assume that $A$ is Heisenberg analytic and invertible. Without loss of generality, we may assume that $\| A\| \leq \frac 12$. To fix the notation for the coefficients, we write 
    \begin{align*}
        \alpha_z(A) = \sum_{\alpha \in \mathbb N_0^{2n}} a_\alpha z^\alpha.
    \end{align*}
    It is clear that the 0-th coefficient $a_0$ in this expansion is again $A$ such that this coefficient is invertible. We now use the classical formula for the inverse of formal power series: Letting $b_0 = a_0^{-1}$ and
    \begin{align*}
        b_\beta = -a_0^{-1} \sum_{0 \leq \alpha < \beta} a_\alpha b_{\beta - \alpha}, \quad \beta \in \mathbb N_0^{2n},
    \end{align*}
    we recursively define the coefficients $b_\beta$. From formal computations, it is now clear that the formal power series
    \begin{align*}
        g(z) = \sum_{\beta \in \mathbb N_0^{2n}} b_\beta z^\beta
    \end{align*}
    satisfies $1 = \alpha_z(A)g(z)$, provided the series converges. We need to prove that the power series defining $g$ actually converges in some neighborhood of zero. For doing so, we fix $R > 0$ small enough such that $\sum_{\alpha \in \mathbb N_0^{2n}} \| a_\alpha\| R^{|\alpha|}$ actually converges. Since this sum depends, within its domain of convergence, continuously on $R$ and equals $\frac 12$ for $R = 0$, we may even choose $R$ small enough such that
    \begin{align*}
        \sum_{\alpha \in \mathbb N_0^{2n}} \| a_\alpha\| R^{|\alpha|} \leq 1.
    \end{align*}
    Now, we will prove by induction that $\| b_\beta\| \leq C \frac{\beta!}{(R/\| a_0^{-1}\|)^{|\beta|}}$. For $\beta = 0$, the statement is clear. For $\beta \neq 0$, we observe that:
    \begin{align*}
        \| b_\beta\| &\leq \| a_0^{-1}\| \sum_{0 \leq \alpha < \beta} \| a_\alpha\| \| b_{\beta - \alpha}\| \leq \| a_0^{-1}\| \sum_{0 \leq \alpha < \beta} \| a_\alpha\| \frac{(\beta - \alpha)!}{(R/\| a_0^{-1}|)^{|\beta - \alpha|}}\\
        &\leq C \| a_0^{-1}\| \| a_0^{-1}\|^{|\beta|-1} \frac{\beta!}{R^{|\beta|}} \sum_{0 \leq \alpha <\beta} \| a_\alpha\| R^{|\alpha|} \leq C  \frac{\beta!}{(R/\| a_0^{-1}\|)^{|\beta|}} \sum_{\alpha \in \mathbb N_0^{2n}} \| a_\alpha\| R^{|\alpha|}\\
        &\leq  C  \frac{\beta!}{(R/\| a_0^{-1}\|)^{|\beta|}}.
    \end{align*}
    Hence, $g$ is a real-analytic function. As the last step, we observe that $\alpha_z(A) g(z) = I$ for $z$ in a neighborhood of $0$. Hence, $g(z) = \alpha_z(A)^{-1} = \alpha_z(A^{-1})$ for such $z$. This shows that $A$ has a right-inverse in the class of Heisenberg-analytic operators. Analogously, one can construct a left-inverse, hence $A$ is invertible in this class.

    Now we come to density of the class of Heisenberg-analytic operators. We claim that for any $A \in \mathcal C(\mathcal H)$, $g_t \ast A$ is Heisenberg-analytic, where $g_t(z) = \frac{1}{(\pi t)^n} e^{-\frac{|z|^2}{t}}$. For doing so, we prove that
\begin{align}\label{est:Gaussian}
    \| \partial^\beta g_t\|_{L^1} \leq C \frac{\beta!}{R^{|\beta|}}
\end{align}
for some $C, R > 0$. Then, by the Hausdorff-Young inequality and covariance of the convolution with respect to taking derivatives, it follows that $g_t \ast A$ is Heisenberg-smooth. Since $g_t \ast A \to A$ in operator norm as $t \to 0$, the claim then follows. 

For proving Eq.~\eqref{est:Gaussian}, it suffices to prove the result for $t = 1$ by a standard substitution. Furthermore, by the tensor product structure of Gaussians, it suffices to prove that
\begin{align*}
    \| d^k f\|_{L^1} \leq C \frac{k!}{R^k}
\end{align*}
for some $C, R > 0$, where $f(x) = e^{-x^2},~x \in \mathbb R$. By Rodriguez' formula, 
\begin{align*}
    d^k f(x) = (-1)^k H_k(x) e^{-x^2},
\end{align*}
where $H_k$ is the $k$-th Hermite polynomial. Hence, we need to estimate $\| H_k e^{-(\cdot)^2}\|_{L^1}$. To this end, 
\begin{align*}
    \| H_k e^{-(\cdot)^2}\|_{L^1} &= \int_{\mathbb R} |H_k(x)| e^{-x^2}~dx\\
    &\leq \int_{\mathbb R} |H_k(x)| e^{-\frac{x^2}{2}}~dx\\
    &= \int_{\mathbb R} \left [H_k^2(x) e^{-x^2} \right]^{\frac{1}{2}}~dx\\
    &= \| H_k^2 e^{-(\cdot)^2}\|_{L^{\frac 12}}^{\frac{1}{2}}.
\end{align*}
The asymptotics of this expression are known: From \cite[Theorem 2.1]{Aptekarev_Dehesa_Sanchez2012} we know that for $k \to \infty$:
\begin{align*}
    \| H_k^2 e^{-(\cdot)^2}\|_{L^{\frac 12}}^{\frac{1}{2}} &= c (2(k+1))^{\frac{k+1}{2}}e^{-\frac{k+1}{2}} (1 + o(1))\\
    &\lesssim 2^{\frac{k+1}{2}} \left( \frac{k+1}{e}\right)^{\frac{k+1}{2}} (1+o(1))
    \intertext{By Stirling's approximation, we see that:}
    &= 2^{\frac{k+1}{2}} \left [\frac{(k+1)!}{\sqrt{2\pi(k+1)}}\right ]^{\frac{1}{2}} (1+o(1))\\
    &\cong 2^{\frac{k}{2}} \sqrt{k+1} (k!)^{\frac{1}{2}}(1+o(1))\\
    &\lesssim 2^k k! (1+o(1)).
\end{align*}
Hence, we see that there exists a constant $C > 0$ such that
\begin{align*}
    \| d^k e^{-(\cdot)^2}\|_{L^1} \leq C \frac{k!}{(1/2)^k},
\end{align*}
which finishes the proof.
\end{proof}
We could also say that a function $f: \mathbb R^{2n} \to \mathbb C$ is $L^p$-analytic if $z \mapsto \alpha_z(f)$ can be expressed as a convergent power series in $L^p(\mathbb R^{2n})$, 
\begin{align*}
    \alpha_z(f) = \sum_{\mathbb N^{2n}} a_\beta z^\beta
\end{align*}
for all $|z| < \varepsilon$, where $a_\beta \in L^p(\mathbb R^{2n})$. Analogously, the operator $A$ is Schatten-$p$-analytic if
\begin{align*}
    \alpha_z(A) = \sum_{\beta \in \mathbb N^{2n}} b_\beta z^\beta
\end{align*}
converges for $|z| < \varepsilon$ in $\mathcal T^p(\mathcal H)$. Then, reasoning entirely analogous to the case $p = \infty$ above yields:
\begin{theorem}
    Let $A = \operatorname{op}^w(f)$. Then, $A$ is Schatten-$p$-analytic if and only if $f$ is $L^p$-analytic.
\end{theorem}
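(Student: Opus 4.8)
The plan is to reproduce, \emph{mutatis mutandis}, the proof of the Heisenberg-analytic theorem, with the operator norm replaced by $\|\cdot\|_{\mathcal T^p}$, the sup-norm on symbols replaced by $\|\cdot\|_{L^p}$, and the Calder\'on--Vaillancourt and reverse Calder\'on--Vaillancourt estimates replaced by their Schatten-$p$ versions established above in this section.

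First I would record the Banach-space-valued version of Lemma \ref{lem:real_analytic}, applied with $X=\mathcal T^p(\mathcal H)$ and with $X=L^p(\mathbb R^{2n})$ (both complete), together with the covariance identity $\alpha_w\alpha_z=\alpha_{w+z}$, which reduces real-analyticity of $z\mapsto\alpha_z(\cdot)$ to analyticity near $0$ exactly as in the $p=\infty$ case. This yields the growth-bound characterization: an operator in $W^{\infty,p}(\mathcal H)$ is Schatten-$p$-analytic if and only if there are $C,R>0$ with $\|\partial^\beta A\|_{\mathcal T^p}\le C\,\beta!/R^{|\beta|}$ for all $\beta\in\mathbb N_0^{2n}$, and likewise $f\in W^{\infty,p}(\mathbb R^{2n})$ is $L^p$-analytic iff $\|\partial^\beta f\|_{L^p}\le C\,\beta!/R^{|\beta|}$. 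Here one also uses that an $L^p$- (resp.\ Schatten-$p$-) analytic object automatically lies in $W^{\infty,p}$, which follows by differentiating the convergent power series term by term.

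For the forward implication I would take $f$ to be $L^p$-analytic with $\|\partial^\beta f\|_{L^p}\le C\,\beta!/R^{|\beta|}$, w.l.o.g.\ $R<1$; then $\partial^\beta f\in W^{2n,p}(\mathbb R^{2n})$, and since $\partial^\beta\operatorname{op}^w(f)=(-1)^{|\beta|}\operatorname{op}^w(\partial^\beta f)$ the Schatten Calder\'on--Vaillancourt estimate gives
\[
\|\partial^\beta A\|_{\mathcal T^p}\ \le\ c\,\|\partial^\beta f\|_{W^{2n,p}}\ \lesssim\ \sum_{|\gamma|\le 2n}\|\partial^{\beta+\gamma}f\|_{L^p}\ \le\ C\sum_{|\gamma|\le 2n}\frac{(\beta+\gamma)!}{R^{|\beta+\gamma|}}.
\]
From here the purely combinatorial manipulation of the Heisenberg-analytic proof --- bounding $(\beta+\gamma)!/\beta!$ by (a constant times) $|\beta|^{|\gamma|}$, absorbing the polynomial factor $|\beta|^{2n}$ into $R^{-|\beta|/2}$, and passing to the radius $S=R^{3/2}$ --- produces $\|\partial^\beta A\|_{\mathcal T^p}\le C'\,\beta!/S^{|\beta|}$, so $A$ is Schatten-$p$-analytic. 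The reverse implication is symmetric: from $\|\partial^\beta A\|_{\mathcal T^p}\le C\,\beta!/R^{|\beta|}$ one applies the reverse Schatten Calder\'on--Vaillancourt estimate to $\partial^\beta A$ (using $\partial^\beta\operatorname{sym}^w(A)=(-1)^{|\beta|}\operatorname{sym}^w(\partial^\beta A)$) to get $\|\partial^\beta f\|_{L^p}\lesssim\sum_{|\gamma|\le 2n}\|\partial^{\beta+\gamma}A\|_{\mathcal T^p}$, and the same combinatorics yields the growth bound characterizing $L^p$-analyticity of $f$.

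The only step requiring genuine (if routine) verification is the first: that Schatten-$p$-analyticity of $A$ --- a convergent $\mathcal T^p$-valued power series for $z\mapsto\alpha_z(A)$ near $0$ --- is equivalent to factorial growth of the phase-space derivatives $\partial^\beta A$ in $\mathcal T^p$, and in particular that such $A$ lie in $W^{\infty,p}(\mathcal H)$ so that the Calder\'on--Vaillancourt estimates are applicable; this is handled by term-by-term differentiation of the power series and completeness of $\mathcal T^p$ and $L^p$. Everything after that is the $p=\infty$ argument line for line, since its combinatorial core used nothing specific about the operator norm --- only linearity and covariance of $\operatorname{op}^w$ together with the two-sided Calder\'on--Vaillancourt bounds.
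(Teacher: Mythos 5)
Your proposal is correct and follows exactly the route the paper intends: it transfers the $p=\infty$ (Heisenberg-analytic) argument verbatim, replacing the sup-norm characterization of analyticity by its $\mathcal T^p$/$L^p$ Banach-space-valued analogue and the Calder\'on--Vaillancourt and reverse Calder\'on--Vaillancourt bounds by their Schatten-$p$ versions from Section \ref{sec:5}, with the same factorial-growth combinatorics. This is precisely the "reasoning entirely analogous to the case $p=\infty$" that the paper invokes, so there is nothing to add.
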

\begin{example}
    As part of the proof of Proposition \ref{proof:analyticspectralinvariance}, we have already obtained that a Gaussian $g$ is $L^1$-analytic, hence $\operatorname{op}^w(g)$ is trace class analytic.
\end{example}

\subsection*{Acknowledgements.}
LvL acknowledges funding by the MWK Lower Saxony (Stay Inspired Grant 15-76251-2-Stay-9/22-16583/2022).

\bibliographystyle{abbrv}
\bibliography{main}

\begin{multicols}{2}

\noindent
Robert Fulsche\\
\href{fulsche@math.uni-hannover.de}{\Letter ~fulsche@math.uni-hannover.de}
\\
\noindent
Institut f\"{u}r Analysis\\
Leibniz Universit\"at Hannover\\
Welfengarten 1\\
30167 Hannover\\
GERMANY\\

\noindent
Lauritz van Luijk\\
\href{lauritz.vanluijk@itp.uni-hannover.de}{\Letter ~lauritz.vanluijk@itp.uni-hannover.de}
\\
\noindent
Institut f\"{u}r Theoretische Physik\\
Leibniz Universit\"at Hannover\\
Appelstra\ss e 2\\
30167 Hannover\\
GERMANY

\end{multicols}
\end{document}